\title[The $u$-invariant of function fields]{The $u$-invariant of function fields in one variable}
\author{Karim Johannes Becher, Nicolas Daans, Vler\"e Mehmeti}
\newtheorem{thm}{Theorem}[section]
\newtheorem{lem}[thm]{Lemma}
\newtheorem{prop}[thm]{Proposition}
\newtheorem{qu}[thm]{Question}
\newtheorem{cor}[thm]{Corollary}
\newtheorem*{cor*}{Corollary}
\newtheorem*{thm*}{Theorem}
\newtheorem{thmI}{Theorem}
\theoremstyle{definition}
\newtheorem*{defn*}{Definition}
\newtheorem{rem}[thm]{Remark}
\newtheorem*{rem*}{Remark}
\newtheorem{ex}[thm]{Example}
\newtheorem*{ex*}{Example}
\newtheorem*{ack*}{Acknowledgements}
\newcommand{\Q}{\mathbb{Q}}
\newcommand{\Z}{\mathbb{Z}}
\newcommand{\zz}{\mathbb{Z}}
\newcommand{\mc}{\mathcal}
\newcommand{\mf}{\mathfrak}
\newcommand{\ovl}{\overline}
\newcommand{\car}{\mathsf{char}}
\renewcommand{\geq}{\geqslant}
\renewcommand{\leq}{\leqslant}
\newcommand{\su}{\widehat{u}}
\newcommand{\half}{\mbox{$\frac{1}2$}}
\newcommand{\mg}[1]{{#1}^\times}
\newcommand{\sq}[1]{{#1}^{\times 2}}
\newcommand{\mfm}{\mf m}
\newcommand{\qq}{\mathbb{Q}}
\newcommand{\rrk}{\mathsf{rrk}}
\renewcommand{\dim}{\mathsf{dim}}
\renewcommand{\det}{\mathsf{det}}
\newcommand{\ff}{\mathbb{F}}
\newcommand{\la}{\langle}
\newcommand{\ra}{\rangle}
\renewcommand{\sup}{\mathsf{sup}}
\renewcommand{\min}{\mathsf{min}}
\newcommand{\nat}{\mathbb{N}}
\newcommand{\rk}{\mathsf{rk}}
\renewcommand{\bmod}{\,\,\mathsf{mod}\,\,}
\newcommand{\rr}{\mathbb{R}}
\newcommand{\cc}{\mathbb{C}}
\newcommand{\hh}{\mathbb{H}}
\renewcommand{\setminus}{\smallsetminus}
\renewcommand{\bmod}{\,\,\mathsf{mod}\,\,}
\newcommand{\scg}[1]{\mg{#1}/\sq{#1}}
\newcommand{\tr}{\mathsf{t}}
\newcommand{\matr}[1]{\mathbb{M}_{#1}}
\newcommand{\sos}[1]{{\mathsf{\Sigma}{#1}^{(2)}}}
\newcommand{\bigperp}{%
  \mathop{\mathpalette\bigp@rp\relax}%
  \displaylimits
}
\newcommand{\bigp@rp}[2]{%
  \vcenter{    \m@th\hbox{\scalebox{\ifx#1\displaystyle2.1\else1.5\fi}{$#1\perp$}}
  }%
}
\address{(K.J.B.) University of Antwerp, Department of Mathematics, Middelheim\-laan~1, 2020 Antwerpen, Belgium}
\address{(N.D.) Charles University, Faculty of Mathematics and Physics, Department of Algebra, Sokolov\-sk\' a 83, 18600 Praha~8, Czech Republic}
\address{(N.D.) KU Leuven, Department of Mathematics, Celestijnenlaan 200B, 3001 Heverlee, Belgium}
\address{(V.M.) Sorbonne Université and Universit\'{e} Paris Cit\'{e}, CNRS, IMJ-PRG, F-75005 Paris, France}
\address{(V.M.) DMA, École normale supérieure, Université PSL, CNRS, 75005 Paris, France \newline}
\email{karimjohannes.becher@uantwerpen.be}
\email{nicolas.daans@kuleuven.be}
\email{vlere.mehmeti@imj-prg.fr}
\date{14.08.2025}
\begin{document}
\begin{abstract}
The $u$-invariant of a field is the largest
dimension of an anisotropic quadratic torsion form over the field. 
In this article we obtain a bound 
on the $u$-invariant of function fields in one variable over a henselian valued field with arbitrary value group 
and with residue field of characteristic different from~$2$.
This generalises a theorem due to Harbater, Hartmann and Krashen and its extension due to Scheiderer. Their result covers the special case where the valuation is discrete.
We further give a new proof of a theorem due to Parimala and Suresh bounding by $8$ the $u$-invariant of a function field in one variable over any henselian discretely valued field of characteristic $0$ with perfect residue field of characteristic $2$.

\medskip\noindent
{\sc Keywords:} Quadratic form, henselian valued field, function field in one variable, local-global principle, residue form, real field, torsion form, general $u$-invariant, dyadic valuation

\medskip\noindent
{\sc Classification (MSC 2020):} 11E04, 11E81, 12D15, 12J10, 14H05
\end{abstract}

\maketitle

\section{Introduction}

The study of numerical field invariants in quadratic form theory was initiated by Irving Kaplansky in 1953.
In \cite{Kap53},  
he introduces three quantities in $\nat\cup\{\infty\}$ attached to an arbitrary field of characteristic different from $2$ and suggests to study and compare them.
One of them was later called the \emph{$u$-invariant}, a term which goes back to Hanfried Lenz' article \cite{Len63} from 1963.

A field $K$ is called \emph{nonreal} if  $-1$ is a sum of squares in $K$, and otherwise $K$ is called \emph{real}.
For this introduction, let $K$ denote a nonreal field of characteristic different from~$2$.

A quadratic form over~$K$ is called \emph{universal} if it represents all elements of $K$.
Following Kaplansky and Lenz, the \emph{$u$-invariant of $K$} is defined as the smallest natural number $n$ such that any regular quadratic form in $n$ variables over $K$ is universal provided that such a natural number exists, and~$\infty$ otherwise.
This quantity in $\nat\cup\{\infty\}$ is denoted by $u(K)$.

In the early 1970ies, Richard Elman and Tsit-Yuen Lam continued the study of this invariant in a series of articles.
They modified the definition in a way that makes no difference for nonreal fields, but makes it more meaningful for real fields. We will recall their definition in \Cref{S:genu}.

As pointed out in \cite[Theorem~3]{Kap53}, one always has $u(K(\!(t)\!))=2u(K)$, where $K(\!(t)\!)$ denotes the field of Laurent series in $t$ over $K$.
Consequently, and since $u(\cc)=1$, examples of fields of $u$-invariant $2^r$ for any given $r\in\nat$ are easy to obtain.
Incited by this observation, Kaplansky wonders in \cite{Kap53} whether $u(K)$ is always a power of $2$ or $\infty$.
This question was answered in the negative in 1989 by Alexander~Merkurjev. 
After giving in \cite{Mer89} a first construction of a field of $u$-invariant $6$, he showed in \cite{Mer91} that every positive even integer is the $u$-invariant of some field.
Merkurjev's construction of examples involves direct limits and produces fields which are not finitely generated over any proper subfield.
When restricting to fields which are of finite transcendence degree over some familiar base field, no examples are known with a $u$-invariant that is not a power of $2$.

Let $n\in\nat$ and let $F/K$ be a field extension of transcendence degree $n$.
One may try to determine or at least bound the $u$-invariant of $F$ in terms of $n$ and  properties of $K$. 
Well-known statements of this sort are the following:
\begin{itemize}
    \item If $K$ is algebraically closed, then $u(F)\leq 2^n$.
    \item If $K$ is finite, then $u(F)\leq 2^{n+1}$.
    \item If $K$ is a complete discretely valued field with finite residue field, then $u(F)\leq 2^{n+2}$.
\end{itemize}

The first two bounds were established in 1936 by Chiungtze C.~Tsen \cite{Tsen36} and independently in 1952 by Serge Lang \cite[Theorem 6, Corollary]{Lan52}.
The last bound was proven by David Leep  \cite{Lee13} in 2013, based crucially on a result on systems of quadratic forms due to Roger Heath-Brown \cite{HB10}. (When $\car(K)\neq 0$, that bound also follows from  
\cite[Theorem 6 and Theorem 8]{Lan52}.)

In each of the three cases above, the upper bound becomes an equality  if~$F/K$ is finitely generated. This follows by a classical valuation argument; see \Cref{fg-u-lowerbound} below.
Hence, in those situations, we actually have $u(F)=2^d$ where $d$ is the cohomological dimension of $F$.
This might suggest that $u(F)=2^{n+2}$ would hold as well when $K$ is a nonreal number field and $F/K$ a finitely generated extension of transcendence degree $n$.
However, even for $n=1$ this is still an open problem.
A more general open question is whether $u(F)$ can be bounded in terms of $u(K)$ and the transcendence degree of~$F/K$.

We will now focus on function fields in one variable over a given base field.
By a \emph{function field in one variable over $K$}, we mean a finitely generated field extension of transcendence degree $1$.
We can ask for sufficient conditions on $K$ for a certain number $u$ to serve as a common upper bound on $u(F)$ for all function fields in one variable $F/K$.

Our main result provides a transfer principle for this common upper bound between a  henselian valued field $K$ and its residue field, assuming the latter to be of characteristic different from $2$.
In particular, for any odd prime number~$p$, applying \Cref{T:main} below to the field of $p$-adic numbers~$\qq_p$ with its natural valuation, one retrieves the result from \cite{PS10} and \cite{HHK09} that $u(F)=8$ for every function field in one variable $F/\qq_p$.

Our standard reference for the theory of valued fields is \cite{EP05}.
For a valuation~$v$ on $K$, we denote 
by $Kv$ the residue field and by $vK$ the value group of $v$, and we call $v$ \emph{discrete} if $vK\simeq \zz$.
Given a valuation $v$ on $K$, we call the pair $(K,v)$ a \emph{valued field}.
A valuation $v$ on $K$ is called \emph{henselian} if, 
for every finite field extension $L/K$, there is a unique extension of $v$ to a valuation $w$ on $L$;
note that in this case $w$ is henselian, too.
By \cite[Theorem 1.3.1]{EP05}, every complete rank-$1$ valuation is henselian.

The characteristic of a ring $R$ is denoted by $\car(R)$.
Our main result can be stated as follows:

\begin{thmI}[\Cref{T:main-body}]\label{T:main}
Assume that $(K,v)$ is a henselian valued field with $\car(Kv)\neq 2$.
For $u\in\nat$, the following are equivalent:
\begin{enumerate}[$(i)$]
    \item $u(F)\leq [vK:2vK]\cdot u$ for every function field in one variable $F/K$.
    \item $u(E)\leq u$ for every function field in one variable $E/Kv$ or $[vK:2vK]=\infty$.
\end{enumerate}
Furthermore, if $u(E)=u$ for every function field in one variable $E/Kv$, then $u(F)=[vK:2vK]\cdot u$ for every function field in one variable $F/K$. 
\end{thmI}

If $v$ is a discrete valuation, then $[vK:2vK]=2$.
Hence \Cref{T:main} generalises \cite[Theorem 4.10]{HHK09}, where the equivalence of $(i)$ and $(ii)$ was obtained for the case where $(K,v)$ is a complete discretely valued field with $\car(Kv)\neq 2$.
Under the same hypothesis on $(K,v)$,
a valuation theoretic local-global principle for isotropy of quadratic forms over function fields in one variable over $K$
was established in \cite[Cor.~3.5]{CTPS12}, from which \cite[Theorem 4.10]{HHK09} is easily retrieved.
This local-global principle \cite[Theorem 3.1]{CTPS12} was extended in \cite[Cor.~3.19]{Meh19} by the last author of the present article to the situation where the valuation $v$ on $K$ is complete and the value group $vK$ is an arbitrary ordered subgroup of~$\rr$.
It is illustrated in \cite[Section 6]{Meh19} how this local-global principle applies to establish upper bounds on the $u$-invariant of function fields in one variable over complete valued fields with value group contained in~$\rr$.
By \cite[Theorem~4.4]{BDGMZ}, the local-global principle from \cite[Cor.~3.19]{Meh19} extends to the case where $v$ is henselian but not necessarily complete,  keeping the assumption that $vK$ is contained in $\rr$. 
Nevertheless, this version of the local-global principle turns out to be applicable to the study of field invariants in the case of function fields in one variable over henselian valued fields without assumption on the value group, by using reduction arguments concerning the rank of the valuation.
Such reduction steps have been applied in \cite{BDGMZ} to the study of the Pythagoras number of function fields, and in \cite{Daa24} to the study of linkage of Pfister forms over function fields. Here we apply them to the study of the $u$-invariant, to prove \Cref{T:main}.

Unlike any previous results in the same direction,  \Cref{T:main} holds for an arbitrary value group $vK$. 
Independently from our work, a weaker version of the implication from $(ii)$ to $(i)$ was obtained  in \cite[Theorem 4.1]{Man24} for the case where $vK\subseteq \rr$ (see also \Cref{R:compare-Mehmeti19}).

In any case, the applicability of a local-global principle to a problem on field invariants in quadratic form theory depends on the control over the same problem in the local situation, here specifically on the ability to bound the $u$-invariant of a henselian valued field in terms of the $u$-invariant of the residue field.
The study of quadratic forms over a complete valued field with residue characteristic different from $2$ and value group contained in $\rr$ goes back to William Hetherington~Durfee~\cite{Dur48}.
Some of the results were  reproven for complete discrete valuations by Tonny Albert Springer \cite{Spr55}.
The results from \cite{Dur48} extend to henselian valuations without any condition on the value group.
While these statements and techniques are essentially well-known, to the best of our knowledge the literature does not cover them in the desirable generality or under minimal hypotheses.
We therefore include in \Cref{S:DT} a self-contained and elementary presentation of those statements which are relevant to our topic. We indicate in \Cref{rem:Durfee} how this compares to the existing literature.

Next to removing any restricting hypothesis on the value group, another aspect giving \Cref{T:main} a wider scope compared to preceding results on $u$-invariants of function fields is that it applies for the definition of the `general $u$-invariant' introduced by Elman and Lam \cite{EL73}, which is meaningful also for real fields. 
In this setting, Claus Scheiderer had already extended \cite[Cor.~4.12]{HHK09} to \cite[Theorem~3]{Schei09}, thus covering \Cref{T:main} in the case of a discrete valuation.
In \Cref{S:genu} we revisit this definition in a way that applies to all fields, also regardless of their characteristic, and we prove some basic bounds for valued fields.
In dealing with the $u$-invariant of function fields in one variable, the `strong $u$-invariant' of a field was introduced in \cite[Sect.~5]{HHK09} as a short-hand notation.
In \Cref{S:genu-strong}, we briefly revisit this notion, basing it on the general $u$-invariant as to fit the study of real and nonreal function fields of any characteristic.

In \Cref{S:last}, we present the proof of \Cref{T:main} and then provide some examples of fields where it applies.
The obvious challenge left by \Cref{T:main}
is that of removing the restriction on the residue field to be of characteristic different from~$2$.
A valuation whose residue field has characteristic $2$ is also called \emph{dyadic}.
In \Cref{S:dyadic} we do a first step on the problem in the dyadic case.
For the local-global principle from~\cite{Meh19}, this is perfectly possible, as is explained in \cite[Theorem~7.1]{BDD23}.
The study of quadratic forms over the completion 
$F^w$ of the function field~$F$ with respect to a dyadic valuation $w$ on $F$, however, poses additional problems.
The $u$-invariant of dyadic complete discretely valued fields was studied in \cite{MMW91}.
However, the valuations $w$ appearing in the local-global principle are not all discrete.
Using ground work from \cite{ET11} on quadratic forms over dyadic henselian valued fields, we achieve an alternative proof of the following theorem due to Raman~Parimala and Suresh Venapally from \cite{PS14}.

\begin{thmI}[\Cref{PS} or {\cite[Theorem~4.7]{PS14}}]
        Let $(K,v)$ be a henselian discretely valued field such that $\car(K)=0$, $\car(Kv)=2$ and  $Kv$ is perfect.
        Let $F/K$ be a function field in one variable.
        Then $u(F)=8$ if $K$ has a finite field extension of even degree, and otherwise $u(F)=4$.
\end{thmI}

%%%%%%%%%%%%%%%%%%%%%%%%%%%%%%%%%%%%%%%%%%%%%%%%%%%%%%%%
\section{Valuations and torsion-free abelian groups} %%%
%%%%%%%%%%%%%%%%%%%%%%%%%%%%%%%%%%%%%%%%%%%%%%%%%%%%%%%%

\label{S:TFAG-val}

In the following statements we consider torsion-free abelian groups.
We will denote the group operation additively.

Consider an abelian group $G$. The value $\rrk(G) = \dim_{\Q}(G \otimes_{\Z} \Q)$ in $\nat\cup\{\infty\}$ is called the \emph{rational rank of $G$}. 
Hence, $\rrk(G)=|S|$ for any maximal $\zz$-linearly independent subset $S$ in $G$.
In particular, $G$ is torsion if and only if $\rrk(G) = 0$.

The following is an extension of \cite[Lemma~3.4]{BL13}.

\begin{prop}\label{P:torsion-quotient-torsionfree-groups}\label{C:torsion-quotient-torsionfree-groups}
Let $G$ be a torsion-free abelian group and $H$ a subgroup of~$G$ with $r = \rrk(G/H) < \infty$.
Let $n \in \nat^+$.
Then $[G : nG] \leq n^r [H : nH]$.
Furthermore, if $G/H$ is finitely generated, then $[G : nG] = n^r [H : nH]$.
\end{prop}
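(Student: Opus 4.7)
My plan is to reduce the general case to the finitely generated case, which is [BL13, Lemma 3.4], via a directed-union argument. Assume $[H:nH]$ is finite, as the inequality is otherwise vacuous.

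For each finite subset $S \subseteq G$, put $G_S := H + \langle S\rangle$. Then $G_S$ is torsion-free (being a subgroup of $G$) and contains $H$, and $G_S/H$ is finitely generated, being generated by the images of the elements of $S$. Moreover, the inclusion $G_S \hookrightarrow G$ induces an injection $G_S/H \hookrightarrow G/H$, so $\rrk(G_S/H) \leq \rrk(G/H) = r$. Applying [BL13, Lemma 3.4] to the pair $(G_S, H)$ yields
\[
[G_S:nG_S] \;=\; n^{\rrk(G_S/H)}\cdot[H:nH] \;\leq\; n^r[H:nH].
\]
The composite $G_S \hookrightarrow G \twoheadrightarrow G/nG$ has image $(G_S + nG)/nG \cong G_S/(G_S \cap nG)$, which is a quotient of $G_S/nG_S$; hence this image has cardinality at most $n^r[H:nH]$.

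Since $G = \bigcup_S G_S$ as $S$ ranges over the finite subsets of $G$, and the family $\{G_S\}_S$ is directed under inclusion, the family of images $\{(G_S + nG)/nG\}_S$ forms a directed system of subgroups of $G/nG$ whose union is all of $G/nG$. Now any filtered union of subgroups all of size at most a fixed $N$ has itself size at most $N$: otherwise one could pick $N+1$ distinct elements in the union and, by directedness, find a single member of the system containing all of them, contradicting the size bound. Applying this with $N = n^r[H:nH]$ gives $[G:nG] \leq n^r[H:nH]$, proving the inequality.

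For the equality when $G/H$ is finitely generated, pick $g_1,\ldots,g_k \in G$ whose images generate $G/H$; then for $S := \{g_1,\ldots,g_k\}$ one has $G_S = G$, and [BL13, Lemma 3.4] applied to $(G, H)$ directly yields $[G:nG] = n^r[H:nH]$. The only subtle step in this plan is the directed-union observation above; everything else is a direct application of [BL13, Lemma 3.4] to each finitely generated sub-pair $(G_S, H)$.
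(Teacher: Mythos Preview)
Your proposal is correct and follows essentially the same approach as the paper: both reduce the general inequality to the finitely generated case by passing to subgroups $G_S=H+\langle S\rangle$ with $G_S/H$ finitely generated, noting that distinct classes in $G/nG$ remain distinct in $G_S/nG_S$ (equivalently, the image of $G_S$ in $G/nG$ is a quotient of $G_S/nG_S$). The only difference is that the paper proves the finitely generated equality case from scratch (via an intermediate $G_0$ with $G_0\simeq\zz^r\times H$ and $G/G_0$ finite), whereas you invoke \cite[Lemma~3.4]{BL13} directly for that step.
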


\begin{proof}
For any $s\in\nat$ and elements $f_1,\dots,f_s\in G$ representing different classes in $G/nG$, denoting by $F$ the subgroup of $G$ generated by $H\cup\{f_1,\dots,f_s\}$, we have that $F/H$ is finitely generated and $f_1,\dots,f_s$ represent distinct classes in $F/nF$, whereby $s\leq [F:nF]$.
Therefore, to prove that $[G : nG] \leq n^r [H : nH]$,
we may assume without loss of generality that $G/H$ is finitely generated.

We choose $g_1,\dots,g_r\in G$ such that $(g_i+H\mid 1\leq i\leq r)$ 
is a maximal $\Z$-linearly independent family in $G/H$ and we denote by $G_0$ the subgroup of~$G$ generated by $H\cup\{g_1,\dots,g_r\}$.
The choice of $g_1,\dots,g_r$ implies that $G/G_0$ is torsion.

Assume that $G/H$ is finitely generated.
Then $G/G_0$ is finite and $G_0\simeq \Z^r\times H$, whereby
$[G_0 : nG_0] = n^r [H : nH]$.
Since $G$ is torsion-free, multiplication by $n$ induces an isomorphism $G/G_0\to nG/nG_0$. In particular $[G:G_0]=[nG:nG_0]$,
whereby $[G_0:nG_0]=[G:nG]$.
Therefore $[G:nG] = n^r [H:nH]$.
\end{proof}

By an \emph{ordered abelian group} we mean an abelian group $(G, +, 0)$ endowed with a total order relation $\leq$ such that,  for any $x, y, z \in G$ with $x \leq y$, one has $x + z \leq y + z$.
Note that any ordered abelian group is torsion-free as a group.
When $(G, +, 0, \leq)$ is an ordered abelian group, a \emph{convex subgroup} is a subgroup~$H$ of $G$ such that for all $x \in H$ and $y \in G$ with $0 \leq y \leq x$, we have $y \in H$.
One sees that the set of convex subgroups of $G$ is totally ordered by inclusion.
The cardinality of the set of proper convex subgroups of $G$ will be called the \emph{rank of~$G$} and denoted by $\rk(G)$.
By \cite[Prop.~3.4.1]{EP05}, one has $\rk(G) \leq \rrk(G)$.

Given a valuation $v$ on a field $K$, we refer to the rank of the value group $vK$ also as the \emph{rank of $v$} and denote it by $\rk(v)$.
Note that an ordered abelian group can be embedded into~$\rr$ if and only if it is of rank at most $1$. (Consequently, the rank-$1$ valuations on~$K$ correspond precisely to the non-trivial absolute values on~$K$.)

When $L/K$ is an extension of fields and $v$ and $w$ are valuations on $K$ and $L$ respectively, we say that $w$ \emph{extends} $v$ if $vK\subseteq wL$ and $v = w\vert_K$, and we call $(L, w)/(K, v)$ an \emph{extension of valued fields}.

\begin{prop}\label{P:val-fufi-ext-cases}
    Let $F/K$ be a function field in one variable and let $w$ be a valuation on $F$ and $v=w|_K$.
    Then one of the following holds:
    \begin{enumerate}[$(1)$]
        \item $Fw/Kv$ is a finite field extension and $wF/vK$ is a finitely generated $\zz$-module with $\rrk (wF/vK)=1$.
        \item $Fw/Kv$ is a function field in one variable and $wF/vK$ is finite.
        \item $Fw/Kv$ is an algebraic field extension and $wF/vK$ is a torsion group.
    \end{enumerate}
\end{prop}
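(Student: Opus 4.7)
The plan is to reduce the trichotomy to \emph{Abhyankar's inequality} together with its equality refinement. First I would invoke
\[
\mathrm{trdeg}(Fw/Kv) + \rrk(wF/vK) \;\leq\; \mathrm{trdeg}(F/K) \;=\; 1,
\]
which holds for any extension $(F,w)/(K,v)$ of valued fields with $F/K$ finitely generated (see for instance \cite[Theorem~3.4.3]{EP05}), together with its classical sharpening: whenever equality holds, $Fw/Kv$ is a finitely generated field extension of $Kv$ and $wF/vK$ is a finitely generated abelian group.

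Next I would distinguish cases according to the pair $(\mathrm{trdeg}(Fw/Kv), \rrk(wF/vK))$, whose only possible values are $(1,0)$, $(0,1)$, and $(0,0)$. These should correspond to cases~$(2)$, $(1)$, and $(3)$ of the statement, in that order. In the case $(1,0)$, equality in Abhyankar's inequality forces $Fw/Kv$ to be finitely generated of transcendence degree~$1$, i.e.\ a function field in one variable over $Kv$, while $wF/vK$ is finitely generated and torsion, hence finite; this yields~$(2)$. In the case $(0,1)$, equality similarly forces $Fw/Kv$ to be finitely generated algebraic, hence finite, and $wF/vK$ to be a finitely generated $\zz$-module of rational rank~$1$, giving~$(1)$. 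In the remaining case $(0,0)$, the algebraicity of $Fw/Kv$ and the torsion property of $wF/vK$ are immediate from the definitions of transcendence degree and rational rank, and no finiteness refinement is invoked; this is~$(3)$.

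The only genuinely non-trivial input is the equality refinement of Abhyankar's inequality; the rest is a routine case split, so I do not anticipate real obstacles beyond pinpointing the most convenient reference. Should \cite{EP05} state only the bare inequality, the refinement can alternatively be extracted from Bourbaki's \emph{Commutative Algebra}, Chapter~VI, \S10, Theorem~1, or deduced directly by choosing a transcendence basis of $F/K$ adapted to $w$ and comparing ramification indices and residue degrees along the resulting tower.
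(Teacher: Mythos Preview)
Your proposal is correct and is essentially the same as the paper's own proof, which simply reads ``This follows from \cite[Theorem~3.4.3]{EP05}.'' You have merely unpacked the case split that this citation encodes, and your identification of the three cases $(1,0)$, $(0,1)$, $(0,0)$ with the three alternatives of the proposition is exactly the intended reading.
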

\begin{proof}
    This follows from \cite[Theorem~3.4.3]{EP05}.
\end{proof}

\begin{lem}\label{L:hens-subfield-reduction}
    Let $(K,v)$ be a henselian valued field and $K_0$ a subfield of $K$.
    There exists a subfield $K_1$ of $K$ containing $K_0$ such that $vK_1/vK_0$ is torsion, $Kv/K_1v$ is algebraic and purely inseparable, $v|_{K_1}$ is henselian and the order of any non-trivial torsion element in $vK/vK_1$ is a power of $\car(Kv)$.
\end{lem}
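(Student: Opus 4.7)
The plan is to construct $K_1\subseteq K$ by saturating $K_0$ inside $K$ with three kinds of elements---lifts of a transcendence basis of $Kv/K_0v$, Hensel lifts of separable algebraic elements in $Kv$, and representatives of cosets of $vK_0$ in the isolator $D:=\{\gamma\in vK:n\gamma\in vK_0\text{ for some }n\in\nat^+\}$---and then taking the henselization inside $K$. First note that $D$ is a subgroup of $vK$ with $D/vK_0$ torsion (by definition) and $vK/D$ torsion-free (if $n\gamma\in D$ then $mn\gamma\in vK_0$ for some $m$, so $\gamma\in D$). Fix a transcendence basis $T$ of $Kv/K_0v$ and let $M\subseteq Kv$ be the separable-algebraic closure of $K_0v(T)$ in $Kv$; then $Kv/M$ is algebraic and purely inseparable. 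Any subfield $K_1$ of $K$ containing $K_0$ with $v|_{K_1}$ henselian, $vK_1=D$, and $K_1v\supseteq M$ will satisfy all four conclusions, since $Kv/K_1v$ is then a subextension of $Kv/M$.

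The construction proceeds in three stages. \emph{Stage 1.} For each $\bar t\in T$, pick a lift $\tilde t\in K$ with $v(\tilde t)=0$ and residue $\bar t$; by the behaviour of the Gauss extension, $K_0(\{\tilde t\}_{\bar t\in T})$ has value group $vK_0$ and residue field $K_0v(T)$, and henselizing inside $K$ preserves these invariants, yielding a henselian subfield $L_1$ with $vL_1=vK_0$ and $L_1v=K_0v(T)$. \emph{Stage 2.} For each separable-algebraic $\bar r\in M$ over $K_0v(T)$, apply Hensel's lemma in the henselian field $K$ to the lift of the minimal polynomial of $\bar r$ to obtain a root $\tilde r\in K$ with residue $\bar r$; the fundamental inequality forces $L_1(\tilde r)$ to have value group $vL_1$ and residue field $L_1v(\bar r)$. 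Adjoining all such $\tilde r$ yields a field $L_2$ algebraic over $L_1$ (hence henselian) with $vL_2=vK_0$ and $L_2v=M$. \emph{Stage 3.} For each nontrivial coset $\gamma+vK_0\in D/vK_0$, with $n_\gamma$ the order of $\gamma+vK_0$ and $a_\gamma\in L_2$ with $v(a_\gamma)=n_\gamma\gamma$, pick $s_\gamma\in K^\times$ with $v(s_\gamma)=\gamma$; after a Hensel-type normalization (exploiting the pure inseparability of $Kv/M$ to lift a $p^k$-th power of the residue of $s_\gamma^{n_\gamma}/a_\gamma$ into $L_2v=M$, then using Hensel in $K$ to adjust $s_\gamma$ by a unit), arrange $L_2(s_\gamma)/L_2$ to be totally ramified of degree $n_\gamma$, with value group $vL_2+\zz\gamma\subseteq D$ and residue field still contained in a purely inseparable extension of $M$ inside $Kv$. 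Setting $K_1$ to be the henselization of $L_2(\{s_\gamma\}_\gamma)$ inside $K$ gives the desired field.

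The main technical obstacle is Stage 3: ensuring that the adjunction of each $s_\gamma$ contributes exactly $\zz\gamma$ to the value group and keeps the residue field inside a purely inseparable extension of $M$, so that $vK_1=D$ and $K_1v$ remains algebraic and purely inseparable in $Kv$. This is delicate particularly in residue characteristic $p$ when $p\mid n_\gamma$, as standard Hensel-based $n_\gamma$-th root extraction fails; the normalization must then be carried out by combining Hensel's lemma in $K$ with the pure inseparability of $Kv/M$, effectively moving a $p^k$-th power of the troublesome residue into $M$ before lifting. An alternative strategy, bypassing the explicit normalization, is a Zorn's lemma argument on the family of henselian subfields $L\subseteq K$ containing $K_0$ with $vL\subseteq D$ and $Lv\supseteq M$; one must then verify that a maximal such $L$ has $vL=D$, which reduces to essentially the same Hensel-normalization step.
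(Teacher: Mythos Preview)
Your explicit three-stage construction is far more laborious than necessary, and the difficulty you identify in Stage~3 is genuine but entirely avoidable. The paper's proof bypasses all of this by applying Zorn's Lemma to a \emph{larger} and simpler family: one takes $K_1$ maximal among subfields of $K$ containing $K_0$ with $vK_1/vK_0$ torsion, imposing \emph{no} henselianity or residue-field condition. Maximality then forces $K_1$ to be relatively algebraically closed in $K$ (any algebraic extension has finite, hence torsion, value-group quotient), and this immediately gives henselianity of $v|_{K_1}$. For any $t\in K\setminus K_1$ (necessarily transcendental over $K_1$), maximality excludes cases~(2) and~(3) of \Cref{P:val-fufi-ext-cases}, forcing $vK_1(t)/vK_1\simeq\zz$ and $K_1(t)v/K_1v$ finite; this yields $vK/vK_1$ torsion-free and $Kv/K_1v$ algebraic in one stroke. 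Pure inseparability of $Kv/K_1v$ then follows by lifting separable residues via Hensel into the relatively algebraically closed $K_1$.

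Your Stage~3 normalisation, by contrast, has a real gap: even granting the $p\mid n_\gamma$ fix you sketch, you do not control what happens when adjoining several $s_\gamma$ at once. If $\gamma'\in vL_2(s_\gamma)$ already, then $s_{\gamma'}$ may be transcendental over $L_2(s_\gamma)$ and land you in case~(1), introducing a value $\delta\notin D$ (e.g.\ take $K_0=k(t)$ inside $K=k((t^{\rr}))$ and choose $s_{1/2}=t^{1/2}(1+t^{\sqrt 2})$: then $v(s_{1/2}^2/t-1)=\sqrt 2\notin\qq=D$). More importantly, your alternative Zorn argument \emph{does} work, and---contrary to what you say---does not reduce to the same normalisation problem. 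For a maximal $L$ in your family and any $\gamma\in D\setminus vL$, pick \emph{any} $s\in K$ with $v(s)=\gamma$: since $\gamma+vL$ is a nonzero torsion element of $vL(s)/vL$, case~(1) of \Cref{P:val-fufi-ext-cases} is excluded, so $vL(s)/vL$ is torsion and the henselisation $L(s)^h\subseteq K$ lies in the family, contradicting maximality. No careful choice of $s$ is needed. The paper's version simply strips away the superfluous conditions on $L$ so that even Stages~1--2 become unnecessary.
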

\begin{proof}
     Using Zorn's Lemma, we choose $K_1$ as an extension of $K_0$ inside $K$ which is maximal with respect to the property that the quotient $vK_1/vK_0$ is torsion.
    
    Consider $t\in K$ algebraic over $K_1$. Then ${[vK_1(t):vK_1]\leq [K_1(t): K_1]<\infty}$,
    so $vK_1(t)/vK_1$ is torsion, and it follows by the choice of $K_1$ that $t\in K_1$.
    Therefore,~$K_1$ is relatively algebraically closed in $K$.
    
    Consider $t\in K$ transcendental over $K_1$. Then $K_1(t)/K_1$ is a rational function field in one variable,
    and hence it follows by \Cref{P:val-fufi-ext-cases} from the choice of~$K_1$ that the extension $K_1(t)v/K_1v$ is finite. % and $vK_1(t)/vK_1\simeq\zz$.
    Having this for any $t\in K$, we conclude that the extension $Kv/K_1v$ is algebraic.% and $vK/vK_1$ is torsion-free.

    Since $(K,v)$ is henselian and $K_1$ is relatively algebraically closed in $K$, it follows that $(K_1,v|_{K_1})$ is henselian as well.
    
    Consider $\xi\in Kv$ separable over $K_1v$. Since $(K,v)$ is henselian, $\xi$ is the residue of some element $t\in \mc{O}_v$ algebraic over $K_1$. 
    Since $K_1$ is algebraically closed in $K$, we obtain that $t\in K_1$, whereby $\xi\in K_1v$.
    This shows that $Kv/K_1v$ is purely inseparable.

    It remains to be shown that the order of any torsion element of $vK/vK_1$ is either $1$ or a power of $\car(Kv)$.
    Hence, for $q\in\nat^+$ not divisible by $\car(Kv)$ and $t\in\mg{K}$ such that $qv(t)\in vK_1$, we need to show that $v(t)\in vK_1$.
    Given any such~$q$ and $t$, we choose $y \in K_1$ such that $v(y) = qv(t)=v(t^q)$, whereby $yt^{-q}\in \mg{\mc{O}}_v$.
    Since $Kv/K_1v$ is purely inseparable, there exists a positive integer $m$ which is either $1$ or a power of $\car(Kv)$ such that the residue of $(yt^{-q})^m$ lies in $K_1v$.
    We fix $u\in\mg{K}_1$ such that $(yt^{-q})^m\in u+\mfm_{v}$.
    Since $(K, v)$ is henselian and $q \neq \car(Kv)$, one has $1 + \mf{m}_{v} \subseteq K^{\times q}$. Hence, there exists $z \in \mg{\mc{O}}_v$ with $(yt^{-q})^m = uz^q$. 
    Hence  $(t^mz)^q=u^{-1}y^m \in\mg{K}_1$.
    Since $K_1$ is relatively algebraically closed in $K$, it follows that $t^mz \in \mg{K}_1$.
    Since $v(z)=0$, we obtain that $mv(t) = v(t^mz) \in vK_1$. Since $qv(t)\in vK_1$ and $q$ is coprime to $m$, we conclude that $v(t)\in vK_1$, as desired.
\end{proof}

%%%%%%%%%%%%%%%%%%%%%%%%%%%%%%%%%%%%%%%%%%%%%
\section{Valued fields and residue forms} %%%
%%%%%%%%%%%%%%%%%%%%%%%%%%%%%%%%%%%%%%%%%%%%%
\label{S:DT}

We assume familiarity with standard facts from quadratic form theory over fields as covered in particular by the first chapters of \cite{Lam05} and \cite{EKM08}.
On this basis, we will give a succinct introduction to the theory of quadratic forms over valued fields with residue characteristic different from $2$.  
While in the case of a complete discrete valuation all these results are well-known, we have not found a presentation of this theory in the literature which would provide the statements in this generality.
Our presentation resembles that 
of \cite{Dur48}, a pioneer article on the topic of quadratic forms over valuation rings.
The terminology used in \cite{Dur48} is meanwhile a bit off the standard, and the statements there are formulated for a  valuation that is assumed to be complete of rank-$1$, while the arguments mostly work in more generality.
Our presentation highlights that large parts of the theory may be developed without recourse to a henselization or completion of the valuation ring.
To introduce the concepts and formulate a first few facts, we deal with quadratic forms over a local ring in which $2$ is invertible.

Let $R$ be a commutative ring, with unity.
A \emph{quadratic form over $R$} is a homogeneous polynomial $\varphi$ of degree $2$ in a prescribed finite tuple of variables whose length is called the  \emph{dimension of $\varphi$} and denoted by $\dim(\varphi)$.
Let $n\in\nat$ and consider an $n$-dimensional quadratic form $\varphi$ over $R$.
It gives rise to a map $R^n\to R$, defined by evaluating $\varphi$ on $n$-tuples,
as well as to an $R$-bilinear form 
$$ \mf{b}_\varphi : R^n \times R^n \to R : (x, y) \mapsto \varphi(x + y) - \varphi(x) - \varphi(y).$$
There is a unique symmetric $n\times n$-matrix $A$ over $R$ such that 
$\mf{b}_\varphi$ is given by $(x,y)\mapsto xAy^\tr$, and we call $\varphi$ \emph{nonsingular} if $\det(A)\in\mg{R}$.
We say that $\varphi$ \emph{represents $a\in R$} if $a=\varphi(x)$ for some $x\in R^n$.

Let $n\in\nat$ and $X=(X_1,\dots,X_n)$. 
Given elements $a_1,\dots,a_n\in R$, we denote the $n$-dimensional quadratic form $a_1X_1^2+\ldots+a_nX_n^2$ over $R$ by $\la a_1,\dots,a_n\ra_R$, or simply by $\la a_1,\dots,a_n\ra$ if the ring $R$ is clear from the context. A quadratic form of this shape (that is, where the  terms $X_iX_j$ with $i\neq j$ all have coefficient~$0$) is called \emph{diagonalized} or a \emph{diagonal form}.
Let
$\varphi,\psi\in R[X]$ be $n$-dimensional quadratic forms over $R$.
We call $\varphi$ and $\psi$ \emph{isometric} if $\psi(X)=\varphi(X\cdot C)$ for some invertible matrix $C\in\matr{n}(R)$, and we write $\varphi\simeq \psi$ to indicate this.
If $\varphi$ and $\psi$ are quadratic forms over $R$, $m=\dim(\varphi)$ and $n=\dim(\psi)$, then
we denote by $\varphi\perp\psi$ the $(m+n)$-dimensional quadratic form $\varphi(X_1,\dots,X_m)+\psi(X_{m+1},\dots,X_{m+n})$ over $R$.
We further denote by $\hh_R$ or by $\hh$ the quadratic form~$X_1X_2$ over $R$. 

The next few observations are under the assumption that $2\in\mg{R}$.
Note that this implies that $\hh_R\simeq\la 1,-1\ra$.
Consider an $n$-dimensional quadratic form $\varphi$ in $X=(X_1,\dots,X_n)$ over $R$.
Then $\varphi=XSX^{\tr}$ for a unique symmetric matrix $S\in\matr{n}(R)$.
We set $\det(\varphi)=\det(S)$ and call this the \emph{determinant of $\varphi$}.
Note that $\mf{b}_\varphi:R^n\times R^n\to R$ is given by $(x,y)\mapsto x(2S)y^\tr$, and hence $\varphi$ is nonsingular if and only if $\det(S)\in\mg{R}$, and we simply call $\varphi$ \emph{regular} in this case. (We avoid to use the term `regular' without assuming $2\in\mg{R}$, as it then occurs with different possible meanings in the literature.)
Note that $\det(\varphi\perp\psi)=\det(\varphi)\cdot \det(\psi)$ for any quadratic form $\psi$ over $R$.

While our main attention is to the case where $R$ is a valuation ring with $2\in\mg{R}$, the next few statements hold more generally when $R$ is local and $2\in\mg{R}$.

\begin{prop}\label{P:local-ring-chain-equivalence-step}
Assume that $R$ is local and $2\in\mg{R}$.
Let $\varphi$ be a quadratic form over $R$ and $b\in\mg{R}$.
Then $\varphi$ represents $b$ if and only if $\varphi\simeq \la b\ra_R\perp\psi$ for a quadratic form $\psi$ over $R$.
\end{prop}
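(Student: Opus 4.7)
\medskip

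\noindent\textbf{Proof plan.} The reverse implication is immediate: if $\varphi\simeq\la b\ra_R\perp\psi$, then by definition $\varphi(X\cdot\alpha)=(\la b\ra\perp\psi)(X)=bX_1^2+\psi(X_2,\dots,X_n)$ for some invertible $\alpha\in\matr{n}(R)$, and evaluating at the first standard basis vector gives $\varphi(\alpha_1)=b$, where $\alpha_1$ denotes the first row of $\alpha$. So the whole content is in the forward direction, and I plan to prove it by producing an explicit invertible change of variables that brings $\varphi$ into the desired shape.

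The first step is to localise the representation: if $\varphi(x)=b$ with $x=(x_1,\dots,x_n)\in R^n$, then at least one $x_i$ must lie in $\mg{R}$. Indeed, the polynomial $\varphi$ is homogeneous of degree $2$ with no constant term, so all its coefficients times monomials of degree $2$ in the $x_j$ take values in the ideal generated by the $x_j$; if every $x_j$ lay in the maximal ideal $\mfm$ of $R$, then $\varphi(x)\in\mfm$, contradicting $b\in\mg{R}$. Fix an index $i$ with $x_i\in\mg{R}$.

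Next I would build an invertible matrix $\alpha_0\in\matr{n}(R)$ whose first row is $x$: take the matrix whose first row is $x$ and whose remaining $n-1$ rows are the standard basis vectors $e_1,\dots,\widehat{e_i},\dots,e_n$ (omitting~$e_i$). Cofactor expansion along the $i$-th column shows $\det(\alpha_0)=\pm x_i\in\mg{R}$, so $\alpha_0\in\GL_n(R)$. Setting $\varphi_1(X)=\varphi(X\cdot \alpha_0)$, the coefficient of $X_1^2$ in $\varphi_1$ equals $\varphi(x)=b$, and we may write
$$\varphi_1(X)=bX_1^2+X_1\cdot L(X_2,\dots,X_n)+Q(X_2,\dots,X_n)$$
for some linear form $L$ and quadratic form $Q$ in $X_2,\dots,X_n$ over $R$.

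Finally, I would complete the square, which is legitimate because both $2$ and $b$ are units in $R$. Set $c=(2b)^{-1}\in\mg{R}$; then
$$bX_1^2+X_1L=b\bigl(X_1+cL\bigr)^2-(4b)^{-1}L^2,$$
so the substitution $X_1\mapsto X_1-cL(X_2,\dots,X_n)$, $X_j\mapsto X_j$ for $j\geq 2$, realised by an invertible upper-triangular matrix $\alpha_1\in\matr{n}(R)$ with ones on the diagonal, transforms $\varphi_1$ into $bX_1^2+\psi(X_2,\dots,X_n)$, where $\psi=Q-(4b)^{-1}L^2$. Composing the changes of variables yields $\varphi\simeq\la b\ra_R\perp\psi$. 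The only point requiring any care is the invertibility of $\alpha_0$, which is precisely where the local hypothesis is used; the rest is routine polynomial manipulation enabled by $2\in\mg{R}$.
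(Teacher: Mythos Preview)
Your proof is correct and follows essentially the same approach as the paper: use locality to find a unit coordinate of $x$, extend $x$ to an $R$-basis, and then eliminate the cross-terms with $X_1$ (you do this by completing the square; the paper does the equivalent Gram--Schmidt step $e_i'=e_i-(2b)^{-1}\mf{b}_\varphi(x,e_i)\,x$ using the bilinear form). The only difference is that you carry out the basis change in two explicit matrix steps while the paper does it in one, but the content is identical.
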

\begin{proof}
Clearly, $b$ is represented by $\la b\ra_R$ and hence also by $\varphi$ if $\varphi\simeq \la b\ra_R\perp\psi$ for some quadratic form $\psi$ over $R$.
To prove the converse, assume that $\varphi(x) = b$ for some $x =(x_1,\dots,x_n)\in R^n$.
Since $b \in R^\times$ and $R$ is local, there exists $i \in \{ 1, \ldots, n \}$ with $x_i \in R^\times$. We may without loss of generality assume that  $x_1 \in R^\times$.
Let $(e_1,\ldots,e_n)$ be the canonical basis of $R^n$. We set $e_1' = x$ and further 
$e_i' = e_i - (2b)^{-1}\mf{b}_\varphi(e_1', e_i) e_1' \in R^n$ for $2\leq i\leq n$.
Then $(e_1',\dots,e_n')$ is also an $R$-basis of $R^n$.
Let $C\in\matr{n}(R)$ be the matrix of the base change from $(e_1', \ldots, e_n')$ to $(e_1, \ldots, e_n)$. 
Then $C$ is invertible in $\matr{n}(R)$.
Using that $\mf{b}_\varphi$ is $R$-bilinear, one computes that $\mf{b}_\varphi(e_1', e_i') = 0$ for $2\leq i\leq n$.
Thus, letting $X=(X_1,\dots,X_n)$, we have that 
$\varphi(X)\simeq \varphi(X\cdot C)=bX_1^2+\psi(X_2,\dots,X_n)$ for an $(n-1)$-dimensional quadratic form $\psi$ over $R$.    
\end{proof}

\begin{cor}\label{C:local-ring-chain-equivalence-step}
Assume that $R$ is local and $2\in\mg{R}$.
Let $\varphi$ be a quadratic form over $R$ and $n=\dim(\varphi)$.
Then $\varphi$ is regular if and only if $\varphi\simeq \la a_1,\dots,a_n\ra_R$ for certain $a_1,\dots,a_n\in\mg{R}$.
\end{cor}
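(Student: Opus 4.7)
The plan is a straightforward induction on $n = \dim(\varphi)$, using \Cref{P:local-ring-chain-equivalence-step} as the inductive engine. The non-trivial direction is the forward one, and its real substance is the claim that any regular quadratic form over $R$ represents a unit.

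For the reverse direction, if $\varphi \simeq \la a_1,\dots,a_n\ra_R$ with $a_1,\dots,a_n \in \mg R$, the Gram matrix is $\mathrm{diag}(a_1,\dots,a_n)$, with determinant $a_1\cdots a_n \in \mg R$, so $\varphi$ is regular; and regularity is preserved under isometry.

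For the forward direction, I would induct on $n$, with the cases $n\leq 1$ being immediate (a regular one-dimensional form is $aX^2$ with $a\in\mg R$). For $n\geq 2$, the idea is to peel off a one-dimensional factor using \Cref{P:local-ring-chain-equivalence-step}. Concretely, I would first show that $\varphi$ represents some element of $\mg R$. Writing $\varphi(X) = XAX^{\tr}$ with $A=(A_{ij})$ symmetric and invertible, and letting $(e_1,\dots,e_n)$ be the canonical basis of $R^n$, note that $\varphi(e_i) = A_{ii}$ and $\varphi(e_i+e_j) = A_{ii}+2A_{ij}+A_{jj}$ for $i\neq j$. If some $A_{ii}\in\mg R$, we are done. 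Otherwise, reducing $A$ modulo the maximal ideal $\mf m$ of $R$ yields an invertible matrix over $R/\mf m$ whose diagonal is zero, so some off-diagonal entry $A_{ij}$ must lie in $\mg R$; then $2A_{ij}\in\mg R$ since $2\in\mg R$, while $A_{ii},A_{jj}\in\mf m$, so $\varphi(e_i+e_j)\in\mg R$.

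Having some $a_1\in\mg R$ represented by $\varphi$, \Cref{P:local-ring-chain-equivalence-step} gives $\varphi\simeq \la a_1\ra_R\perp\psi$ for some $(n-1)$-dimensional quadratic form $\psi$ over $R$. Since the determinant is multiplicative under orthogonal sums, $\det(\varphi) = a_1\cdot\det(\psi)\in\mg R$ forces $\det(\psi)\in\mg R$, so $\psi$ is regular. By the induction hypothesis, $\psi\simeq \la a_2,\dots,a_n\ra_R$ for suitable $a_2,\dots,a_n\in\mg R$, and this yields the desired diagonalization.

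The only step with any content is the unit-representation claim; the obstacle is merely ensuring it goes through without needing $R$ to be a valuation ring, but the residue-field linear-algebra argument above uses only that $R$ is local with $2\in\mg R$, which matches the hypotheses.
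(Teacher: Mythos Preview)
Your proof is correct and follows essentially the same approach as the paper's: both argue by induction on $n$, use \Cref{P:local-ring-chain-equivalence-step} to split off a one-dimensional summand once a represented unit is found, and deduce regularity of the complement via multiplicativity of the determinant. For the key step of producing a represented unit, the paper observes that some matrix entry is a unit, hence $\mf{b}_\varphi(e_i,e_j)\in\mg R$, and then uses $\mf{b}_\varphi(e_i,e_j)=\varphi(e_i+e_j)-\varphi(e_i)-\varphi(e_j)$ together with locality to conclude that one of the three values is a unit; your version with the explicit diagonal/off-diagonal case split is a minor repackaging of the same idea.
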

\begin{proof}
    Clearly, if $\varphi\simeq \la a_1,\dots,a_n\ra_R$ for certain $a_1,\dots,a_n\in\mg{R}$, then $\varphi$ is regular.
    
    The converse is proven by induction on $n$.
    For $n=0$, the statement holds trivially.
    Assume that $n>0$.
Let $\mfm$ be the maximal ideal of $R$.
Then $\mg{R}=R\setminus \mfm$ and, since $\varphi$ is regular, the entries of the associated matrix cannot all lie in $\mfm$.
So there exist $x, y \in R^n$ with $\mf{b}_q(x,y)\notin \mfm$.
As 
$\mf{b}_q(x,y)=\varphi(x+y) - \varphi(x) - \varphi(y)$, we conclude that $\varphi(z)\notin \mfm$ 
for some $z\in\{x,y,x+y\}$.
Letting $a_1=\varphi(z)$, we have that $a_1\in R\setminus\mfm=\mg{R}$ and obtain by \Cref{P:local-ring-chain-equivalence-step} that $\varphi \simeq \la a_1 \ra \perp \psi$
for some $(n-1)$-dimensional quadratic form $\psi$ over $R$.
As $\varphi$ is regular, so is $\la a_1\ra\perp\psi$. Hence $a_1 \det(\psi) \in \mg{R}$, which implies that $\det(\psi)\in\mg{R}$.
Hence $\psi$ is regular.
By the induction hypothesis, we obtain that $\psi\simeq \la a_2,\dots,a_n\ra_R$ for certain $a_2,\dots,a_n\in\mg{R}$, whereby 
 $\varphi\simeq \la a_1,\dots,a_n\ra_R$.
\end{proof}

\begin{lem}\label{L:local-binary-rep}
Assume that $R$ is local with $2\in\mg{R}$.
Let $\mfm$ denote the maximal ideal of $R$. 
Let $x\in \mg{R}$ and $y,c\in R$.
If $|R/\mfm|=3$, then assume that $c\notin 1+\mfm$.
Then there exist $x'\in R$ and $y'\in \mg{R}$ such that $x^2-cy^2=x'^2-cy'^2$.
\end{lem}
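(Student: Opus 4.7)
The plan is to exploit the multiplicativity of the binary quadratic form $N(X, Y) = X^2 - cY^2$, captured by the Brahmagupta-style identity
\[
N(X_1, Y_1) \cdot N(X_2, Y_2) \;=\; N(X_1 X_2 + c Y_1 Y_2,\; X_1 Y_2 + Y_1 X_2),
\]
which one verifies by direct expansion. If $y \in \mg R$, take $(x', y') = (x, y)$; otherwise $y \in \mfm$, and it will suffice to produce a pair $(s, t) \in R \times \mg R$ satisfying $s^2 - ct^2 = 1$. Indeed, setting $x' := xs + cyt$ and $y' := xt + ys$, the identity yields $x'^2 - cy'^2 = (s^2 - ct^2)(x^2 - cy^2) = x^2 - cy^2$, while $y' \equiv xt \pmod{\mfm}$ lies in $\mg R$ since $x, t \in \mg R$.

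To construct such an $(s,t)$, I would invoke the stereographic parametrization of the conic $S^2 - cT^2 = 1$ based at the rational point $(1, 0)$: for any $\lambda \in R$ with $c\lambda^2 - 1 \in \mg R$, a short computation (writing $u := c\lambda^2 - 1$ and expanding $(1 + c\lambda^2)^2 - 4c\lambda^2 = u^2$) confirms that
\[
s \;=\; \frac{1 + c\lambda^2}{c\lambda^2 - 1}\,, \qquad t \;=\; \frac{2\lambda}{c\lambda^2 - 1}
\]
lie in $R$ and satisfy $s^2 - ct^2 = 1$. Since $2 \in \mg R$, the additional requirement $t \in \mg R$ reduces to $\lambda \in \mg R$.

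The problem is thereby reduced to finding $\lambda \in \mg R$ such that $c\lambda^2 - 1 \in \mg R$, or equivalently, in the residue field $k := R/\mfm$, producing $\overline\lambda \in \mg k$ with $\overline c\,\overline\lambda^2 \neq 1$. If $\overline c = 0$, any unit (say $\lambda = 1$) works. If $\overline c \neq 0$, the bad values satisfy $\overline\lambda^2 = \overline c^{\,-1}$, of which there are at most two in $\mg k$; since $\car(k) \neq 2$ forces $|k|$ to be an odd prime power, we have $|k| \geq 3$, and whenever $|k| \geq 5$ the two bad values fail to exhaust $\mg k$. The delicate case is $|k| = 3$, where $\mg k = \{\pm 1\}$ and both units square to $1$; a good $\lambda$ exists precisely when $\overline c \neq 1$, i.e.\ when $c \notin 1 + \mfm$, which is exactly the hypothesis imposed in that case.

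The main conceptual step is recognising that the multiplication identity for the form $\langle 1, -c\rangle$ reduces everything to parametrising a nontrivial solution of $s^2 - ct^2 = 1$ with $t$ a unit; the rest is an elementary counting argument in the residue field, sharp precisely when $|k| = 3$, which explains why the auxiliary hypothesis on $c$ cannot be dropped in that case.
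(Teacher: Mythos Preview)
Your proof is correct and follows essentially the same approach as the paper: the paper's explicit formulas $x' = (1-\lambda^2 c)^{-1}\bigl((1+\lambda^2 c)x + 2\lambda c y\bigr)$ and $y' = (1-\lambda^2 c)^{-1}\bigl(2\lambda x + (1+\lambda^2 c)y\bigr)$ are exactly your $(xs + cyt,\, xt + ys)$ up to a harmless overall sign, and the paper's condition on $\lambda$ reduces, once $y\in\mfm$, precisely to your requirement that $\lambda\in\mg R$ with $c\lambda^2 - 1\in\mg R$. Your write-up is in fact more explicit than the paper's, which simply asserts that such a $\lambda$ exists ``in view of the hypotheses'' without carrying out the residue-field count.
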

\begin{proof}
If $y\in\mg{R}$, then we may take $x'=x$ and $y'=y$. 
Assume now that $y\notin \mg{R}$.
In view of the hypotheses, there exists $\lambda \in R$ such that $1-\lambda ^2c\in\mg{R}$ and $2\lambda x +(1+\lambda^2c)y\in\mg{R}$.
Set $x'=(1-\lambda ^2c)^{-1}((1+\lambda^2c)x+2\lambda c y)$ and $y'=(1-\lambda^2c)^{-1}(2\lambda x +(1+\lambda^2c)y)$. 
Then
\begin{eqnarray*}
(1-\lambda ^2c)^2(x'^2-cy'^2) & = & ((1+\lambda ^2c)x+2\lambda c y)^2-c(2\lambda x +(1+\lambda ^2c)y)^2\\ & = & (1-\lambda ^2c)^2(x^2-cy^2)\,,
\end{eqnarray*}
whereby $x^2-cy^2=x'^2-cy'^2$.
\end{proof}

For $n\in\nat$, a vector $x\in R^n$ is called \emph{primitive} if its coordinates generate $R$ as an ideal; as $R$ is local, this just says that one of the coordinates of $x$ lies in $\mg{R}$.

For valuation rings, the following statement is \cite[Theorem 4.5]{CLRR80}, where it is attributed to J.-L.~Colliot-Th\'el\`ene and M.~Kneser. We give a different proof.

\begin{prop}\label{L:local-rep}
Assume that $R$ is local with $2\in\mg{R}$.
Let $\varphi$ be a regular quadratic form over $R$, $n=\dim(\varphi )$ and $b\in R$. If $x_0^2b=\varphi(x)$ for some $x_0\in R$ and some primitive vector $x\in R^n$, then 
$b$ is represented by $\varphi$.
\end{prop}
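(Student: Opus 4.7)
My plan is to reduce the claim to an application of \Cref{L:local-binary-rep}. If $x_0 \in \mg{R}$, then $b = \varphi(x_0^{-1} x)$ and we are done; so assume $x_0 \in \mfm$. By \Cref{C:local-ring-chain-equivalence-step} I may diagonalize $\varphi$ and hence suppose $\varphi = \la a_1, \ldots, a_n \ra_R$ with $a_1, \ldots, a_n \in \mg{R}$; the vector $x = (x_1, \ldots, x_n)$ remains primitive in the new coordinates. I then fix an index $i$ with $x_i \in \mg{R}$.

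The identity $\sum_j a_j x_j^2 = x_0^2 b$ rewrites as $a_i x_i^2 - b x_0^2 = -\sum_{j \neq i} a_j x_j^2$, which means that the binary form $a_i \la 1, -c \ra_R$, with $c = a_i^{-1} b \in R$, takes the value $-\sum_{j \neq i} a_j x_j^2$ at the vector $(x_i, x_0)$ whose first coordinate is a unit. The plan is to invoke \Cref{L:local-binary-rep} for the form $\la 1, -c \ra_R$ to produce $x_i' \in R$ and $x_0' \in \mg{R}$ with $x_i^2 - c x_0^2 = (x_i')^2 - c (x_0')^2$. Multiplying by $a_i$ and rearranging yields $a_i (x_i')^2 + \sum_{j \neq i} a_j x_j^2 = b (x_0')^2$, and dividing by the unit $(x_0')^2$ exhibits $b$ as the value of $\varphi$ at the vector whose $i$-th coordinate is $x_i'/x_0'$ and whose $j$-th coordinate is $x_j/x_0'$ for $j \neq i$.

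The main obstacle is the extra hypothesis in \Cref{L:local-binary-rep}, which requires $c \notin 1 + \mfm$ when $|R/\mfm| = 3$; equivalently, $a_i \not\equiv b \pmod \mfm$. This is automatic if \emph{some} unit-coordinate index $i$ of $x$ satisfies $a_i \not\equiv b \pmod \mfm$. The remaining case is $|R/\mfm| = 3$ with $a_i \equiv b \pmod \mfm$ for every $i$ with $x_i \in \mg{R}$, which forces $b \in \mg{R}$. Reducing $\sum_j a_j x_j^2 = x_0^2 b$ modulo $\mfm$ and using $\ovl{x}_0 = 0$ gives $\ovl{b} \cdot k \equiv 0 \pmod 3$, where $k$ is the number of unit coordinates of $x$; hence $k$ is divisible by $3$ and so $k \geq 3$. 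Pigeonhole yields $i_0 \neq j_0$ with $x_{i_0}, x_{j_0} \in \mg{R}$ and $x_{i_0} \equiv x_{j_0} \pmod \mfm$. Then $e_{i_0} + e_{j_0}$ is primitive, $\varphi(e_{i_0} + e_{j_0}) = a_{i_0} + a_{j_0} \equiv 2\ovl{b} \not\equiv \ovl{b} \pmod \mfm$ is a unit, and by \Cref{P:local-ring-chain-equivalence-step} I may re-diagonalize $\varphi$ with $a_{i_0}+a_{j_0}$ as new leading coefficient. A short computation shows that in this new basis the first coordinate of $x$ is $(a_{i_0} x_{i_0} + a_{j_0} x_{j_0})/(a_{i_0}+a_{j_0})$, still a unit, which puts us back in the generic situation treated above.
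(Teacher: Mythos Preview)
Your proof is correct and follows essentially the same approach as the paper's: diagonalize, then apply \Cref{L:local-binary-rep} at a unit coordinate, with a separate treatment of the case $|R/\mfm|=3$ when every unit-coordinate entry satisfies $a_i\equiv b$. The only difference is in that special case: the paper simply observes that two unit coordinates $x_1,x_2$ must exist and sets $c=a_1x_1^2+a_2x_2^2\equiv 2\bar b\not\equiv\bar b$, so that $\la c,a_3,\dots,a_n\ra$ takes the value $bx_0^2$ at $(1,x_3,\dots,x_n)$ and one is back in the generic case---no re-diagonalization, no pigeonhole, and only $k\geq 2$ is needed. Your route (showing $k\geq 3$, pigeonholing to find $x_{i_0}\equiv x_{j_0}$, then re-diagonalizing with first entry $a_{i_0}+a_{j_0}$) achieves the same reduction but with more machinery; note that you could also avoid pigeonhole by using $e_{i_0}-e_{j_0}$ when $x_{i_0}\equiv -x_{j_0}$, which would let $k\geq 2$ suffice in your argument too.
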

\begin{proof}
Let $\mfm$ denote the maximal ideal of $R$.
By \Cref{C:local-ring-chain-equivalence-step}, we may assume without loss of generality that $\varphi =\la a_1,\dots,a_n\ra_R$ with $a_1,\dots,a_n\in\mg{R}$.
Let $x=(x_1,\dots,x_n)\in R^n$ be primitive and $x_0\in R$ such that $x_0^2b=\varphi(x)$.
We may permute $a_1,\dots,a_n$ and assume without loss of generality that, for some $m\in\{1,\dots,n\}$, we have 
 $x_1,\dots,x_m\in\mg{R}$ and $x_{m+1},\dots,x_n\in \mfm$.

Assume first that $|R/\mfm|>3$ or
$a_1\not\equiv b\bmod \mfm$.
Set $c=a_1^{-1}b$.
Using \Cref{L:local-binary-rep}, we obtain that 
$x_1^2-cx_0^2=z_1^2-cz_0^2$ for certain $z_0\in\mg{R}$ and $z_1\in R$. 
We set $y_1=z_0^{-1}z_1$ to obtain that 
 $$a_2x_2^2+\dots+a_nx_n^2=bx_0^2-a_1x_1^2=bz_0^2-a_1z_1^2=z_0^2(b-a_1y_1^2)\,.$$
Letting $y_i=z_0^{-1}x_i$ for $2\leq i\leq n$, we obtain that $b=a_1y_1^2+\ldots+a_ny_n^2$, as desired.

Consider now the case where $|R/\mfm|=3$ and $a_1\equiv b\bmod \mfm$.
If $x_0\in\mg{R}$, then
we may set $y_i=x_0^{-1}x_i$ for $1\leq i\leq n$
to obtain the desired equality.
Assume now that $x_0\in\mfm$.
Since $a_1x_1^2\in\mg{R}$ and $a_1x_1^2+\ldots+a_nx_n^2=bx_0^2\in\mfm$, we obtain that $m\geq 2$, whereby $x_2\in\mg{R}$.
If $a_2\not\equiv b\bmod \mfm$, then we switch the roles of $a_1$ and $a_2$ and conclude by the previous case.
So we are left to consider the case 
where $a_1\equiv a_2\equiv b\bmod \mfm$.
We set $c= a_1x_1^2 + a_2x_2^2$.
Then $c\equiv -b\not\equiv b\bmod \mfm$, and 
we conclude by the previous case that there exist $z, y_3, y_4, \ldots,y_n \in R$
such that $b = cz^2 + a_3y_3^2 + \ldots + a_ny_n^2$. Then letting $y_1 = zx_1$ and $y_2 = zx_2$, we obtain that $b=a_1y_1^2+\ldots+a_ny_n^2$.
\end{proof}

We collect some standard facts and definitions for quadratic forms over fields, referring to \cite[Chap.~II]{EKM08} for more details.
Let $n \in \nat$ and consider an $n$-dimensional quadratic form $\varphi$ over a field $K$.
We call $\varphi$ \emph{isotropic} if there exists $x \in K^n \setminus \{ 0 \}$ with $\varphi(x) = 0$, \emph{anisotropic} otherwise.
We have that $\varphi$ is nonsingular if and only if it is isometric to $(m \times \hh_K) \perp \psi$ for some natural number $m$ and a nonsingular anisotropic quadratic form $\psi$ over~$K$, where $m \times \hh_K$ denotes the $m$-fold orthogonal sum $\hh_K \perp \ldots \perp \hh_K$.
This $\psi$ is uniquely determined up to isometry and called the \emph{anisotropic part of $\varphi$}.
A nonsingular quadratic form whose anisotropic part is trivial ($0$-dimensional) is called \emph{hyperbolic}.
If $\car(K) \neq 2$, then every anisotropic quadratic form over $K$ is nonsingular, and by \Cref{C:local-ring-chain-equivalence-step} isometric to $\la a_1, \ldots, a_n \ra_K$ for some $a_1, \ldots, a_n \in \mg{K}$.

In the sequel, let $K$ be a field and $v$ a valuation on $K$.
Recall that we denote by~$\mc{O}_v$ the valuation ring of $v$, by~$\mfm_v$ its maximal ideal, by $Kv$ the residue field $\mc{O}_v/\mfm_v$ and by $vK$  the value group $v(\mg{K})$. We assume in the sequel that $\car(Kv)\neq 2$. This means that $2\in\mg{\mc{O}}_v$.
In particular, $\car(K)\neq 2$.

\begin{prop}\label{P:unimod-rep-valring}
    Let $n\in\nat$ and $a_1,\dots,a_n,b\in\mg{\mc{O}}_v$.
    \begin{enumerate}[$(a)$]
        \item If $\la a_1,\dots,a_n\ra_K$ represents $b$, then $n\geq 1$ and there exist $b_2,\dots,b_n\in\mg{\mc{O}}_v$ such that $\la a_1,\dots,a_n\ra_{\mc{O}_v}\simeq \la b,b_2,\dots,b_n\ra_{\mc{O}_v}$.
        \item If $\la a_1,\dots,a_n\ra_K$ is isotropic, then $n\geq 2$ and there exist $b_3,\dots,b_n\in\mg{\mc{O}}_v$ such that
        $\la a_1,\dots,a_n\ra_{\mc{O}_v}\simeq \hh_{\mc{O}_v}\perp\la b_3,\dots,b_n\ra_{\mc{O}_v}$.
    \end{enumerate}
\end{prop}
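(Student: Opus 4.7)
The overall strategy is to reduce both statements to analogous ones over the local ring $\mc{O}_v$, using a scaling argument to clear denominators, and then to apply the representation and diagonalisation results already established in this section.

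For part (a), writing $b=\sum_{i=1}^n a_ix_i^2$ with $x_i\in K$ forces $n\geq 1$. Set $\varphi=\la a_1,\dots,a_n\ra_{\mc{O}_v}$, which is regular. Choose $s\in\mg{K}$ with $v(s)=-\min_i v(x_i)$, so that $y=(sx_1,\dots,sx_n)\in\mc{O}_v^n$ is primitive and $s^2b=\varphi(y)$. As $b\in\mg{\mc{O}}_v$ and $\varphi(y)\in\mc{O}_v$, this equality forces $v(s)\geq 0$, so $s\in\mc{O}_v$. Now \Cref{L:local-rep} gives a representation of $b$ by $\varphi$ over $\mc{O}_v$, \Cref{P:local-ring-chain-equivalence-step} then yields $\varphi\simeq\la b\ra_{\mc{O}_v}\perp\psi$ for some form $\psi$ over $\mc{O}_v$, and since $\psi$ is regular (as $\varphi$ is), \Cref{C:local-ring-chain-equivalence-step} diagonalises it as $\la b_2,\dots,b_n\ra_{\mc{O}_v}$ with $b_i\in\mg{\mc{O}}_v$.

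For part (b), starting from a nonzero $x\in K^n$ with $\la a_1,\dots,a_n\ra_K(x)=0$ and scaling $x$ by a suitable element of $\mg K$ yields a primitive $y\in\mc{O}_v^n$ with $\varphi(y)=0$, where $\varphi=\la a_1,\dots,a_n\ra_{\mc{O}_v}$. Here $n\geq 2$ is automatic, since $\la a_1\ra_K$ with $a_1\in\mg{\mc{O}}_v$ is anisotropic. After reordering we may assume $y_1\in\mg{\mc{O}}_v$; then $y$ extends to an $\mc{O}_v$-basis $(y,e_2,\dots,e_n)$, for instance by appending standard basis vectors. The heart of the argument is to split off a hyperbolic plane: regularity of $\varphi$ combined with $\varphi(y)=0$ forces $\mf{b}_\varphi(y,e_j)\in\mg{\mc{O}}_v$ for some $j\geq 2$, as otherwise the column of $y$ in the Gram matrix of $\mf{b}_\varphi$ would lie in $\mfm_v$, contradicting that its determinant is a unit. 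Reorder so $j=2$, set $t=\mf{b}_\varphi(y,e_2)^{-1}$, and define
\begin{equation*}
e_2'=t\,e_2-t^2\varphi(e_2)\,y\,,\qquad e_i''=e_i-\mf{b}_\varphi(e_2',e_i)\,y-\mf{b}_\varphi(y,e_i)\,e_2'\ \text{ for }i\geq 3\,.
\end{equation*}
A short bilinear computation yields $\varphi(e_2')=0$, $\mf{b}_\varphi(y,e_2')=1$, and $\mf{b}_\varphi(y,e_i'')=\mf{b}_\varphi(e_2',e_i'')=0$ for $i\geq 3$. The change-of-basis matrix from $(y,e_2,\dots,e_n)$ to $(y,e_2',e_3'',\dots,e_n'')$ is upper triangular with diagonal $(1,t,1,\dots,1)$, hence unimodular over $\mc{O}_v$. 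This produces an orthogonal decomposition $\varphi\simeq\hh_{\mc{O}_v}\perp\psi$ with $\psi$ a regular form of dimension $n-2$, and a final application of \Cref{C:local-ring-chain-equivalence-step} diagonalises $\psi$ as $\la b_3,\dots,b_n\ra_{\mc{O}_v}$ with $b_i\in\mg{\mc{O}}_v$.

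The main technical obstacle is keeping all scalars and basis vectors in $\mc{O}_v$ and verifying unimodularity of the changes of basis; both rely on the standing assumption $\car(Kv)\neq 2$, which makes the diagonalisation and orthogonalisation machinery of \Cref{P:local-ring-chain-equivalence-step} and \Cref{C:local-ring-chain-equivalence-step} available.
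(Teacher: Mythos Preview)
Your proof is correct. Part~(a) follows the same line as the paper: scale to a primitive vector over $\mc{O}_v$, invoke \Cref{L:local-rep}, then \Cref{P:local-ring-chain-equivalence-step} and \Cref{C:local-ring-chain-equivalence-step}. (Your deduction that $s\in\mc{O}_v$ is a slight repackaging of the paper's observation that some $x_i$ with $i\geq 1$ must be a unit, but the content is identical.)

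For part~(b), however, you take a genuinely different route. The paper reduces~(b) to~(a): since an isotropic form over the field $K$ is universal, $\la a_2,\dots,a_n\ra_K$ represents $-a_1$; part~(a) then yields $\la a_2,\dots,a_n\ra_{\mc{O}_v}\simeq\la -a_1,b_3,\dots,b_n\ra_{\mc{O}_v}$, and one finishes using $\la a_1,-a_1\ra_{\mc{O}_v}\simeq\hh_{\mc{O}_v}$. You instead perform a direct Witt-style split of a hyperbolic plane over $\mc{O}_v$: from a primitive isotropic vector $y$ you locate an $e_j$ with $\mf{b}_\varphi(y,e_j)\in\mg{\mc{O}}_v$ via regularity, then explicitly build a hyperbolic pair and orthogonalise the complement. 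This avoids any appeal to~(a) or to \Cref{L:local-rep}, trading a one-line reduction for an explicit basis computation. The paper's route is shorter and reuses machinery already in place; your route is self-contained and makes the hyperbolic summand visible over $\mc{O}_v$ from the start. One cosmetic slip: your change-of-basis matrix is lower triangular rather than upper triangular in the row convention, but the diagonal is still $(1,t,1,\dots,1)$, so unimodularity is unaffected.
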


\begin{proof}
We set $\varphi=\la a_1,\dots,a_n\ra_{\mc{O}_v}$ and $\varphi_K=\la a_1,\dots,a_n\ra_K$.

    $(a)$ Assume that $\varphi_K$ represents $b$. Then $bx_0^2=a_1x_1^2+\ldots+a_nx_n^2$ for certain
 $x_0,\dots,x_n\in \mc{O}_v$, not all contained in $\mfm_v$.
 It follows that $x_i\in\mg{\mc{O}}_v$ for some $i\in\{1,\dots,n\}$.
We obtain by \Cref{L:local-rep} that  $b=a_1y_1^2+\ldots+a_ny_n^2$ for certain $y_1,\dots,y_n\in\mc{O}_v$.
We conclude by \Cref{P:local-ring-chain-equivalence-step} that $\varphi\simeq \la b\ra_{\mc{O}_v}\perp\psi$ for some $(n-1)$-dimensional quadratic form $\psi$ over $\mc{O}_v$.
Using \Cref{C:local-ring-chain-equivalence-step} we obtain that $\psi\simeq \la b_2,\dots,b_n\ra_{\mc{O}_v}$ for certain $b_2,\dots,b_n\in\mg{\mc{O}}_v$.
Then
 $\varphi\simeq \la b,b_2,\dots,b_n\ra_{\mc{O}_v}$.

$(b)$ Assume that $\varphi_K$ is isotropic. Hence $a_1x_1^2+\ldots+a_nx_n^2=0$ for certain $x_1,\dots,x_n\in K$ which are not all zero.
If $x_1=0$, then $\la a_2,\dots,a_n\ra_K$ is isotropic.
As every regular isotropic form over $K$ represents all elements of $K$, we obtain in any case that $-a_1$ is represented by $\la a_2,\dots,a_n\ra_K$.
Applying part $(a)$ to $\la a_2,\dots,a_n\ra_K$, we obtain that 
 $\la a_2,\dots,a_n\ra_{\mc{O}_v}\simeq \la -a_1,b_3,\dots,b_n\ra_{\mc{O}_v}$ for certain $b_3,\dots,b_n\in\mg{\mc{O}}_v$.
As $\la a_1,-a_1\ra_{\mc{O}_v}\simeq \hh_{\mc{O}_v}$, we conclude that $\varphi\simeq  \hh_{\mc{O}_v}\perp\la b_3,\dots,b_n\ra_{\mc{O}_v}$.
\end{proof}

A quadratic form $\varphi$ over $K$ is called \emph{$v$-unimodular} if $\varphi\simeq \la a_1,\dots,a_n\ra_K$ for $n=\dim(\varphi)$ and certain $a_1,\dots,a_n\in\mg{\mc{O}}_v$.

\begin{cor}\label{C:anis-part-unimod}
    A regular quadratic form over $K$ is $v$-unimodular if and only if its anisotropic part is $v$-unimodular.
\end{cor}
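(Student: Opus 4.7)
The plan is to handle the two directions separately and in unequal weight: the ``if'' direction is essentially formal, while the ``only if'' direction reduces to an induction on dimension using Proposition \ref{P:unimod-rep-valring}(b).

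For the ``if'' direction, assume $\varphi_{\mathrm{an}}$ is $v$-unimodular, say $\varphi_{\mathrm{an}}\simeq \langle c_1,\dots,c_m\rangle_K$ with $c_1,\dots,c_m\in\mg{\mc{O}}_v$. Since $\car(Kv)\neq 2$ implies $\car(K)\neq 2$, Witt decomposition gives $\varphi\simeq \varphi_{\mathrm{an}}\perp i\cdot \hh_K$ for some $i\in\nat$. Because $\hh_K\simeq \langle 1,-1\rangle_K$ and $1,-1\in\mg{\mc{O}}_v$, I would conclude that $\varphi$ is isometric to a diagonal form over $K$ with all entries in $\mg{\mc{O}}_v$, hence $v$-unimodular.

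For the ``only if'' direction, I would proceed by induction on $n=\dim(\varphi)$. Suppose $\varphi\simeq \langle a_1,\dots,a_n\rangle_K$ with $a_1,\dots,a_n\in\mg{\mc{O}}_v$. If $\varphi$ is anisotropic, there is nothing to show. Otherwise, Proposition \ref{P:unimod-rep-valring}(b) gives an isometry of $\mc{O}_v$-forms
\[\langle a_1,\dots,a_n\rangle_{\mc{O}_v}\simeq \hh_{\mc{O}_v}\perp \langle b_3,\dots,b_n\rangle_{\mc{O}_v}\]
with $b_3,\dots,b_n\in\mg{\mc{O}}_v$. Base changing to $K$ yields $\varphi\simeq \hh_K\perp \langle b_3,\dots,b_n\rangle_K$. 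Witt cancellation (available since $\car(K)\neq 2$) then identifies $\varphi_{\mathrm{an}}$ with the anisotropic part of the $v$-unimodular form $\langle b_3,\dots,b_n\rangle_K$ of strictly smaller dimension, and the induction hypothesis finishes the argument.

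The only point requiring any care is that the $\mc{O}_v$-isometry supplied by Proposition \ref{P:unimod-rep-valring}(b) base-changes to a $K$-isometry; this is immediate since the invertible matrix witnessing the $\mc{O}_v$-isometry remains invertible over $K$. Thus no serious obstacle arises, and the statement reduces cleanly to what has already been established.
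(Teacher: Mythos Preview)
Your proposal is correct and follows the same approach as the paper, which simply records that the statement ``follows by induction from \Cref{P:unimod-rep-valring}~$(b)$.'' You have spelled out both directions in appropriate detail, including the easy ``if'' direction via $\hh_K\simeq\la 1,-1\ra_K$, and the induction step for ``only if'' using Witt cancellation exactly as intended.
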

\begin{proof}
This follows by induction from \Cref{P:unimod-rep-valring}~$(b)$.
\end{proof}

\begin{prop}\label{P:nd-val-unimod-resform}
Let $n\in\nat$. Let $a_1,\dots,a_n,c_1,\dots,c_n\in\mg{\mc{O}}_v$. Assume that we have $\la a_1,\dots,a_n\ra_K\simeq \la c_1,\dots,c_n\ra_K$. Then
 $\la a_1,\dots,a_n\ra_{\mc{O}_v} \simeq \la c_1,\dots,c_n\ra_{\mc{O}_v}$ and
$\la \ovl{a}_1,\dots,\ovl{a}_n\ra_{Kv} \simeq \la \ovl{c}_1,\dots,\ovl{c}_n\ra_{Kv}$.
\end{prop}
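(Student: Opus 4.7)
The plan is to proceed by induction on $n$, with the base case $n = 0$ being trivial (both sides are the empty form). For the inductive step, assume $n \geq 1$ and that the $K$-isometry $\la a_1,\dots,a_n\ra_K \simeq \la c_1,\dots,c_n\ra_K$ holds. In particular, $\la a_1,\dots,a_n\ra_K$ represents $c_1 \in \mg{\mc{O}}_v$, so by \Cref{P:unimod-rep-valring}~$(a)$ there exist $b_2,\dots,b_n \in \mg{\mc{O}}_v$ such that
$$\la a_1,\dots,a_n\ra_{\mc{O}_v} \simeq \la c_1, b_2, \dots, b_n\ra_{\mc{O}_v}.$$

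Passing this isometry to $K$ and combining with the hypothesis yields $\la c_1, b_2, \dots, b_n\ra_K \simeq \la c_1, c_2, \dots, c_n\ra_K$ over $K$. Since $\car(K) \neq 2$, Witt cancellation applies and gives $\la b_2, \dots, b_n\ra_K \simeq \la c_2, \dots, c_n\ra_K$. By the induction hypothesis (applied over $\mc{O}_v$), we get $\la b_2, \dots, b_n\ra_{\mc{O}_v} \simeq \la c_2, \dots, c_n\ra_{\mc{O}_v}$. Combining with the above isometry and taking orthogonal sum with $\la c_1\ra_{\mc{O}_v}$ yields the desired $\mc{O}_v$-isometry $\la a_1,\dots,a_n\ra_{\mc{O}_v} \simeq \la c_1,\dots,c_n\ra_{\mc{O}_v}$.

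For the residue statement, I would observe that by definition the $\mc{O}_v$-isometry is realised by some matrix $\alpha \in \matr{n}(\mc{O}_v)$ with $\det(\alpha) \in \mg{\mc{O}}_v$. Reducing $\alpha$ modulo $\mfm_v$ produces a matrix $\ovl{\alpha} \in \matr{n}(Kv)$ whose determinant is the residue of $\det(\alpha)$, hence lies in $\mg{(Kv)}$, so $\ovl{\alpha}$ is invertible. Reducing the polynomial identity $\la c_1,\dots,c_n\ra_K(X) = \la a_1,\dots,a_n\ra_K(X \cdot \alpha)$ (which has $\mc{O}_v$-coefficients on both sides) modulo $\mfm_v$ then yields the residue isometry $\la \ovl{a}_1,\dots,\ovl{a}_n\ra_{Kv} \simeq \la \ovl{c}_1,\dots,\ovl{c}_n\ra_{Kv}$ realised by $\ovl{\alpha}$.

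The main subtlety is ensuring that Witt cancellation is available at each inductive step: this is where the assumption $\car(Kv) \neq 2$ (which forces $\car(K) \neq 2$) is used, together with the fact that $c_1 \in \mg{K}$ so the one-dimensional form $\la c_1\ra_K$ is regular. Once that is in hand, everything else reduces to bookkeeping and the already-established representation lemma \Cref{P:unimod-rep-valring}~$(a)$. The passage from the $\mc{O}_v$-isometry to the residue isometry is then automatic because the defining matrix is $\mc{O}_v$-invertible and residuation is a ring homomorphism.
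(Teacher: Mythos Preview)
Your proof is correct and follows essentially the same approach as the paper: induction on $n$, applying \Cref{P:unimod-rep-valring}~$(a)$ to split off $\la c_1\ra$, then Witt cancellation over $K$ followed by the induction hypothesis. The paper dispatches the residue statement in a single sentence (``An isometry over $\mc{O}_v$ induces an isometry over $Kv$''), whereas you spell out the matrix-reduction argument explicitly, but this is just added detail rather than a different idea.
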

\begin{proof}
    An isometry over $\mc{O}_v$ induces an isometry over $Kv$, so we only need to prove the first part. The proof is by induction on  $n$.
    For $n=0$ there is nothing to prove. 
    Assume that $n\geq 1$.
    As $c_1$ is represented by $\la a_1,\dots,a_n\ra_K$, 
    \Cref{P:unimod-rep-valring} yields that 
    $\la a_1,\dots,a_n\ra_{\mc{O}_v}\simeq \la c_1,b_2,\dots,b_n\ra_{\mc{O}_v}$ for certain $b_2,\dots,b_n\in\mg{\mc{O}}_v$.
    Then $\la c_1,\dots,c_n\ra_K\simeq \la a_1,\dots,a_n\ra_K\simeq \la c_1,b_2,\dots,b_n\ra_K$.
    Witt's Cancellation Theorem \cite[Chap.~I, Theorem 4.2]{Lam05} yields that 
    $\la c_2,\dots,c_n\ra_K\simeq \la b_2,\dots,b_n\ra_K$.
    Now, we obtain from the induction hypothesis, that 
    $\la c_2,\dots,c_n\ra_{\mc{O}_v}\simeq \la b_2,\dots,b_n\ra_{\mc{O}_v}$.
    Then $\la a_1,\dots,a_n\ra_{\mc{O}_v} \simeq \la c_1,b_2,\dots,b_n\ra_{\mc{O}_v}\simeq \la c_1,c_2\dots,c_n\ra_{\mc{O}_v}$.
\end{proof}

Assume that $\varphi$ is $v$-unimodular.
Let $n=\dim(\varphi)$ and $a_1,\dots,a_n\in\mg{\mc{O}}_v$ such that $\varphi\simeq \la a_1,\dots,a_n\ra_K$.
We associate to $\varphi$ the quadratic form $\ovl{\varphi}^v=\la \ovl{a}_1,\dots,\ovl{a}_n\ra_{Kv}$ over $Kv$. By \Cref{P:nd-val-unimod-resform}, the quadratic form $\ovl{\varphi}^v$ is uniquely determined up to isometry by $\varphi$. We call $\ovl{\varphi}^v$ the \emph{residue form of $\varphi$} (\emph{with respect to $v$}). We often simply write $\ovl{\varphi}$ for $\ovl{\varphi}^v$, if the context clarifies the reference to the valuation~$v$.

\begin{cor}\label{C:isotropy-down}
    Let $\varphi$ be a $v$-unimodular quadratic form over $K$. If $\varphi$ is isotropic, then $\ovl{\varphi}^v$ is isotropic.
\end{cor}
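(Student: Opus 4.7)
The plan is to reduce the question to the case where the given diagonalisation of $\varphi$ over $K$ has a hyperbolic plane sitting inside it as a form over $\mc{O}_v$, and then just reduce modulo $\mfm_v$.

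First I would use $v$-unimodularity to fix a presentation $\varphi\simeq \la a_1,\dots,a_n\ra_K$ with $a_1,\dots,a_n\in\mg{\mc{O}}_v$, so that by definition $\ovl{\varphi}^v\simeq \la \ovl{a}_1,\dots,\ovl{a}_n\ra_{Kv}$. Since $\varphi$ is isotropic over $K$, \Cref{P:unimod-rep-valring}$(b)$ yields that $n\geq 2$ and that
$$\la a_1,\dots,a_n\ra_{\mc{O}_v}\simeq \hh_{\mc{O}_v}\perp \la b_3,\dots,b_n\ra_{\mc{O}_v}$$
for certain $b_3,\dots,b_n\in\mg{\mc{O}}_v$. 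Extending scalars to $K$, we obtain that
$$\la a_1,\dots,a_n\ra_K\simeq \hh_K\perp \la b_3,\dots,b_n\ra_K \simeq \la 1,-1,b_3,\dots,b_n\ra_K,$$
using that $2\in\mg{\mc{O}}_v\subseteq\mg{K}$ forces $\hh_K\simeq \la 1,-1\ra_K$.

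Now I would apply \Cref{P:nd-val-unimod-resform} to the isometry $\la a_1,\dots,a_n\ra_K\simeq \la 1,-1,b_3,\dots,b_n\ra_K$ of $v$-unimodular diagonalisations. This gives the isometry of residue forms
$$\ovl{\varphi}^v\simeq \la \ovl{a}_1,\dots,\ovl{a}_n\ra_{Kv}\simeq \la 1,-1,\ovl{b}_3,\dots,\ovl{b}_n\ra_{Kv},$$
and the right-hand form is plainly isotropic because it contains $\la 1,-1\ra_{Kv}$.

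I do not expect any real obstacle here: the structural lemma \Cref{P:unimod-rep-valring}$(b)$ does all the work of splitting off a hyperbolic plane at the level of $\mc{O}_v$, and \Cref{P:nd-val-unimod-resform} guarantees that the residue form is well-defined and behaves well under isometry, so passing to the residue is immediate.
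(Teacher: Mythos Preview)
Your proof is correct and follows the same approach as the paper, which simply records that the statement follows directly from \Cref{P:unimod-rep-valring}$(b)$. The only difference is a small detour: once you have the isometry $\la a_1,\dots,a_n\ra_{\mc{O}_v}\simeq \hh_{\mc{O}_v}\perp\la b_3,\dots,b_n\ra_{\mc{O}_v}$ over $\mc{O}_v$, you can reduce it modulo $\mfm_v$ directly (an invertible matrix over $\mc{O}_v$ stays invertible over $Kv$) to obtain $\la \ovl{a}_1,\dots,\ovl{a}_n\ra_{Kv}\simeq \hh_{Kv}\perp\la \ovl{b}_3,\dots,\ovl{b}_n\ra_{Kv}$, without passing through $K$ and invoking \Cref{P:nd-val-unimod-resform}; indeed, that direct reduction is the very first line in the proof of \Cref{P:nd-val-unimod-resform}.
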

\begin{proof}
This follows directly from \Cref{P:unimod-rep-valring}~$(b)$.
\end{proof}

\begin{prop}
    Let $\varphi$ be a regular quadratic form over $K$.
    Let $\mc{C}$ be a set of representatives of $\mg{K}/v^{-1}(2vK)$.
    There exist $r\in\nat$, distinct $c_1,\dots,c_r\in \mc{C}$, 
    and $v$-unimodular quadratic forms $\varphi_1,\dots,\varphi_r$ over $K$ such that
    $\varphi\simeq c_1\varphi_1\perp\ldots\perp c_r\varphi_r$.
\end{prop}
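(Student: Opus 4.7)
The plan is to diagonalise $\varphi$ and then regroup the diagonal entries according to their cosets in $\mg{K}/v^{-1}(2vK)$. Since $\car(K)\neq 2$, the form $\varphi$ diagonalises as $\varphi\simeq\la a_1,\ldots,a_n\ra_K$ for some $a_1,\ldots,a_n\in\mg{K}$ (the statement implicitly presupposes the regular case, where all $a_i$ are nonzero, which is the case of interest for later applications).

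For each $i\in\{1,\ldots,n\}$ I would select the unique $c(i)\in\mc{C}$ such that $a_i\in c(i)\cdot v^{-1}(2vK)$, equivalently $v(a_i/c(i))\in 2vK$. Writing $v(a_i/c(i))=2\gamma_i$ with $\gamma_i\in vK$ and choosing any $\pi_i\in\mg{K}$ with $v(\pi_i)=\gamma_i$, I set $u_i=a_i/(c(i)\pi_i^2)$, so that $v(u_i)=0$, that is $u_i\in\mg{\mc{O}}_v$, and $a_i=c(i)\pi_i^2 u_i$. Because $\la a\ra_K\simeq\la a'\ra_K$ whenever $a/a'\in\sq{K}$, this gives $\la a_i\ra_K\simeq\la c(i)u_i\ra_K$ for every $i$.

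Finally, let $c_1,\ldots,c_r$ enumerate without repetition the values taken by the map $i\mapsto c(i)$, and set $I_j=\{i:c(i)=c_j\}$ and $\varphi_j=\la u_i : i\in I_j\ra_K$. Each $\varphi_j$ is $v$-unimodular by construction, and regrouping the diagonal summands according to the partition $\{I_1,\ldots,I_r\}$ of $\{1,\ldots,n\}$ yields
$$\varphi\;\simeq\;\la a_1,\ldots,a_n\ra_K\;\simeq\;\underset{1\leq j\leq r}{\perp}\,\underset{i\in I_j}{\perp}\,\la c_ju_i\ra_K\;=\;\underset{1\leq j\leq r}{\perp}\,c_j\varphi_j\,,$$
which is the desired decomposition; the $c_j$ are distinct by construction, as they lie in distinct cosets of $v^{-1}(2vK)$ in $\mg{K}$.

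There is no serious obstacle in this argument. It reduces to three elementary ingredients: diagonalisation of a quadratic form over a field of characteristic different from $2$, the partition of a finite set of field elements according to their classes in the quotient $\mg{K}/v^{-1}(2vK)$, and the absorption of square factors into one-dimensional diagonal forms. The only minor point to keep in mind is the implicit regularity hypothesis on $\varphi$, without which the right-hand side, being an orthogonal sum of $v$-unimodular forms, cannot match a singular $\varphi$.
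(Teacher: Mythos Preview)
Your proof is correct and follows essentially the same approach as the paper: diagonalise, sort the diagonal entries by their cosets in $\mg{K}/v^{-1}(2vK)$, and absorb the square factor into each one-dimensional piece to extract a $v$-unit. Your observation about the implicit regularity hypothesis is apt; the paper's proof tacitly makes the same assumption when it speaks of ``the values of all entries'' of a diagonalisation.
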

\begin{proof}
    By the hypothesis on $\mc{C}$ we  have $vK=\bigcup_{c\in\mc{C}}(v(c)+2vK)$.
    We fix a diagonalization of $\varphi$ and then choose    
    $c_1,\dots,c_r\in\mc{C}$ such
    that 
    $\bigcup_{i=1}^r (v(c_i) + 2vK)$ contains the values of all entries of this diagonalization.
    For $i\in\{1,\dots,r\}$ and $a\in\mg{K}$ with $v(a)\in v(c_i) + 2vK$, we have $\la a\ra\simeq \la c_iu\ra=c_i\la u\ra$ for some $u \in\mg{\mc{O}}_v$.
    Hence, any of the diagonal entries can be replaced by $c_iu$ for some $i\in\{1,\dots,r\}$ and  $u\in\mg{\mc{O}}_v$.
    This leads to a presentation $\varphi\simeq c_1\varphi_1\perp\ldots\perp c_r\varphi_r$ where $\varphi_1,\dots,\varphi_r$ are $v$-unimodular quadratic forms over $K$.
\end{proof}

We come to the main statements of this section. 
They show that, if the valuation $v$ on $K$ is henselian, then unimodular quadratic forms over $K$ are characterised by their residue forms.
We will formulate those statements under the hypothesis that $1+\mfm_v\subseteq\sq{K}$.
This hypothesis holds in particular when $v$ is henselian. (It is equivalent to $v$ being \emph{$2$-henselian}; see \Cref{R:2hens}.)

\begin{thm}\label{P:hensval-res}
    Assume that $1+\mfm_v\subseteq\sq{K}\!$.
    Let  $\varphi$ be a $v$-unimodular quadratic form over $K$ and $b\in\mg{\mc{O}}_v$.
   The following equivalences hold:
    \begin{enumerate}[$(a)$]
        \item  $\varphi$ represents $b$  if and only if $\ovl{\varphi}^v$ represents $\ovl{b}$.
        \item  $\varphi$ is isotropic if and only if $\ovl{\varphi}^v$ is isotropic.
        \item $\varphi$ is hyperbolic if and only if $\ovl{\varphi}^v$ is hyperbolic.
    \end{enumerate}
\end{thm}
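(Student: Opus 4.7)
I would prove the three equivalences in the order (a), (b), (c): part~(b) will be a short consequence of~(a), and part~(c) will follow from~(b) by induction on the dimension. The hypothesis $1+\mfm_v\subseteq\sq{K}$ is only needed for the converse direction of~(a).

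Fix a diagonalisation $\varphi\simeq \la a_1,\dots,a_n\ra_K$ with $a_i\in\mg{\mc{O}}_v$, so that $\ovl{\varphi}^v=\la \ovl{a}_1,\dots,\ovl{a}_n\ra_{Kv}$. For~(a), the forward direction follows at once from \Cref{P:unimod-rep-valring}~$(a)$ together with \Cref{P:nd-val-unimod-resform}: if $\varphi$ represents $b$, then $\varphi\simeq \la b,b_2,\dots,b_n\ra_{\mc{O}_v}$ for some $b_i\in\mg{\mc{O}}_v$, and passing to residues shows that $\ovl{\varphi}^v$ represents $\ovl{b}$. For the converse, a representation $\sum_i\ovl{a}_i\ovl{y}_i^2=\ovl{b}$ lifts arbitrarily to $y\in\mc{O}_v^n$ with $\varphi(y)\in \mc{O}_v$ and $\varphi(y)\equiv b\pmod{\mfm_v}$, so $\varphi(y)=bu$ for some $u\in 1+\mfm_v\subseteq\sq{K}$; writing $u=s^2$ yields $\varphi(y/s)=b$. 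This is the only place where the hypothesis is used.

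For~(b), the forward direction is \Cref{C:isotropy-down}. For the converse, a nontrivial relation $\sum_i\ovl{a}_i\ovl{y}_i^2=0$ has some $\ovl{y}_i\in\mg{(Kv)}$; after permuting the $a_i$'s we may assume $\ovl{y}_1\in\mg{(Kv)}$, and dividing by $\ovl{y}_1^2$ shows that the residue form of the $v$-unimodular form $\la-a_2,\dots,-a_n\ra_K$ represents $\ovl{a}_1$. Applying~(a) gives $w_2,\dots,w_n\in K$ with $-a_2w_2^2-\dots-a_nw_n^2=a_1$, so that $(1,w_2,\dots,w_n)$ is an isotropic vector for $\varphi$. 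For~(c), the forward direction writes a hyperbolic $\varphi$ as an orthogonal sum of $n/2$ copies of $\hh\simeq \la 1,-1\ra_K$ and applies \Cref{P:nd-val-unimod-resform} to conclude that $\ovl{\varphi}^v$ is an orthogonal sum of hyperbolic planes over $Kv$. The converse is proved by induction on $n=\dim(\varphi)$: the case $n=0$ is trivial, and otherwise $\ovl{\varphi}^v$ hyperbolic is in particular isotropic, so~(b) gives $\varphi$ isotropic, whereby \Cref{P:unimod-rep-valring}~$(b)$ yields $\varphi\simeq \hh\perp\psi$ over $K$ with $\psi$ a $v$-unimodular form of dimension $n-2$; taking residues gives $\ovl{\varphi}^v\simeq \hh\perp\ovl{\psi}^v$, Witt cancellation shows $\ovl{\psi}^v$ hyperbolic, and the induction hypothesis yields $\psi$ hyperbolic, whence so is $\varphi$.

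The main obstacle is the converse of~(a), where the henselicity hypothesis (encoded by $1+\mfm_v\subseteq\sq{K}$) plays its crucial role in lifting the relation $\ovl{\varphi}^v(\ovl{y})=\ovl{b}$ from $Kv$ back to $K$. All the other implications then reduce formally to this, to \Cref{P:unimod-rep-valring}, and to Witt cancellation.
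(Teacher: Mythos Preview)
Your proposal is correct and follows essentially the same route as the paper: part~(a) via \Cref{P:unimod-rep-valring}~$(a)$ and the lifting argument using $1+\mfm_v\subseteq\sq{K}$, part~(b) via \Cref{C:isotropy-down} and reduction to~(a), and part~(c) via \Cref{P:unimod-rep-valring}~$(b)$, Witt cancellation, and induction. The only organisational difference is in~(c): the paper runs a single induction handling both directions at once (splitting on whether $\varphi$ is isotropic), whereas you dispatch the forward direction directly by writing a hyperbolic $\varphi$ as $\la 1,-1,\dots,1,-1\ra_K$ and invoking \Cref{P:nd-val-unimod-resform}, reserving the induction for the converse---a cosmetic variation, not a different idea.
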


\begin{proof}
    Let $n\in\nat$ and $a_1,\dots,a_n\in\mg{\mc{O}}_v$ be such that 
    $\varphi\simeq \la a_1,\dots,a_n\ra_K$, whereby $\ovl{\varphi}\simeq \la \ovl{a}_1,\dots,\ovl{a}_n\ra_{Kv}$.
    
    $(a)$ If $\varphi$ represents $b$ over $K$, then it represents $b$ over $\mc{O}_v$, by \Cref{P:unimod-rep-valring}, and hence $\ovl{\varphi}^v$ represents $\ovl{b}$ over $Kv$.
    For the converse, assume that $\ovl{\varphi}^v$ represents~$\ovl{b}$ over $Kv$.
    Hence, there exist $x\in\mc{O}_v^n$ such that $\varphi(x)\equiv b\bmod \mfm_v$.
    It follows that $\varphi(x)b^{-1}\in 1+\mfm_v\subseteq\sq{K}$, whereby $\varphi(x)\sq{K}=b\sq{K}$. 
    Hence $\varphi$ represents $b$.

    $(b)$ 
    If $\varphi$ is isotropic, then so is $\ovl{\varphi}^v$, by \Cref{C:isotropy-down}.
    If $\ovl{\varphi}$ is isotropic, then
    $\ovl{\varphi}'=\la \ovl{a}_2,\dots,\ovl{a}_n\ra_{Kv}$ represents $-\ovl{a}_1$, and 
    it follows by $(a)$ that $\varphi'$ represents $-a_1$, whereby $\varphi$ is isotropic.
    
    $(c)$ We prove the equivalence by induction on $n$. If $n=0$, then $\varphi$ and~$\ovl{\varphi}^v$ are trivially hyperbolic.
    Assume that $n>0$.
    If $\varphi$ is anisotropic, then so is~$\ovl{\varphi}'$, by $(b)$, and as $n>0$, it follows that neither $\varphi$ nor $\ovl{\varphi}$ is hyperbolic.
    Assume that $\varphi$ is isotropic. By \Cref{P:unimod-rep-valring}~$(b)$, there exist $b_3,\dots,b_n\in\mg{\mc{O}}_v$ such that $\varphi\simeq \hh_K\perp \la b_3,\dots,b_n\ra_K$ and $\ovl{\varphi}\simeq \hh_{Kv}\perp\la\ovl{b}_3,\dots,\ovl{b}_n\ra_{Kv}$.
    Now, by Witt's Cancellation Theorem \cite[Chap.~I, Theorem 4.2]{Lam05}, $\varphi$ is hyperbolic if and only if $\rho=\la b_3,\dots,b_n\ra_K$ is hyperbolic, and $\ovl{\varphi}^v$ is hyperbolic if and only if $\ovl{\rho}^v=\la \ovl{b}_3,\dots,\ovl{b}_n\ra_{Kv}$ is hyperbolic.
    By the induction hypothesis, $\rho$ is hyperbolic if and only if $\ovl{\rho}^v$ is hyperbolic.
    From this together we conclude that $\varphi$ is hyperbolic if and only if $\ovl{\varphi}^v$ is hyperbolic.
\end{proof}

Let $\varphi$ be a quadratic form over $K$.
For $m\in\nat$, we denote by $m\times \varphi$ the $m$-fold orthogonal sum $\varphi\perp\ldots\perp\varphi$.
We say that $\varphi$ is \emph{torsion} if $m\times \varphi$ is hyperbolic for some $m\in\nat^+$.

Part of the following statement recovers \cite[Theorem 1]{Dur48} and its corollary.

\begin{thm}[Durfee]\label{T:Durfee} 
    Assume that $1+\mfm_v\subseteq\sq{K}$.
    Let $\varphi$ be a quadratic form over $K$. 
    Let $r\in\nat$, $c_1,\dots,c_r\in \mg{K}$ with $v(c_i)\not\equiv v(c_j)\bmod 2vK$ for $1\leq i<j\leq r$ and let $\varphi_1,\dots,\varphi_r$ be 
    $v$-unimodular quadratic forms over $K$ such that we have 
     $\varphi\simeq c_1\varphi_1\perp\ldots\perp c_r\varphi_r$.
    Then $\varphi$ is anisotropic, hyperbolic or torsion if and only if 
    all residue forms $\ovl{\varphi}_1^v,\dots,\ovl{\varphi}_r^v$ have the corresponding property.
\end{thm}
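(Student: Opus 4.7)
The plan is to reduce all three equivalences (anisotropic, hyperbolic, torsion) to a single central isotropy lemma: under the hypotheses, $\varphi$ is isotropic if and only if some $\varphi_i$ is isotropic. Once this is established, the anisotropic equivalence follows by combining it with the equivalence between isotropy of a $v$-unimodular form and isotropy of its residue form from \Cref{P:hensval-res}~$(b)$.

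To prove the isotropy lemma, the ``if'' direction is obvious. For the ``only if'' direction I would argue by contrapositive: assuming every $\varphi_i$ is anisotropic, I aim to show $\varphi$ is anisotropic. Suppose, for a contradiction, that $\sum_{i=1}^r c_i\varphi_i(y_i) = 0$ holds for tuples $y_i \in K^{n_i}$ (with $n_i = \dim(\varphi_i)$), not all zero. For each $i$ with $y_i \neq 0$, I would rescale by choosing $\pi_i \in \mg{K}$ whose valuation equals the minimum of the valuations of the coordinates of $y_i$, so that $z_i := \pi_i^{-1}y_i \in \mc{O}_v^{n_i}$ is primitive. By \Cref{P:hensval-res}~$(b)$, $\ovl{\varphi}_i^v$ is anisotropic, which forces $\ovl{\varphi}_i^v(\ovl{z}_i) \neq 0$ in $Kv$, so $\varphi_i(z_i) \in \mg{\mc{O}}_v$ and hence $v(\varphi_i(y_i)) = 2v(\pi_i) \in 2vK$. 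Consequently $v(c_i\varphi_i(y_i)) \equiv v(c_i) \bmod 2vK$, and since the $v(c_i)$ are pairwise distinct modulo $2vK$, the valuations of the nonzero summands of $\sum_{i=1}^r c_i\varphi_i(y_i)$ are pairwise distinct. A sum of elements with pairwise distinct valuations cannot vanish, yielding the required contradiction.

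For the hyperbolic equivalence, the ``if'' direction follows from \Cref{P:hensval-res}~$(c)$ applied to each $\varphi_i$, together with the facts that $c\hh_K \simeq \hh_K$ for every $c \in \mg{K}$ and that an orthogonal sum of hyperbolic forms is hyperbolic. For the ``only if'' direction I would induct on $\dim(\varphi)$: if $\varphi$ is hyperbolic and nonzero, it is isotropic, so by the isotropy lemma some $\varphi_i$, say $\varphi_1$, is isotropic; \Cref{P:unimod-rep-valring}~$(b)$ yields $\varphi_1 \simeq \hh_K \perp \varphi_1'$ with $\varphi_1'$ still $v$-unimodular. Witt's Cancellation Theorem applied to $\varphi \simeq \hh_K \perp (c_1\varphi_1' \perp c_2\varphi_2 \perp \cdots \perp c_r\varphi_r)$ shows the parenthesized form is hyperbolic and of smaller dimension, so the induction hypothesis renders $\varphi_1', \varphi_2, \ldots, \varphi_r$ hyperbolic and hence also $\varphi_1$. \Cref{P:hensval-res}~$(c)$ then gives each $\ovl{\varphi}_i^v$ hyperbolic. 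The torsion equivalence follows from the hyperbolic case applied to $m\times\varphi \simeq c_1(m\times\varphi_1) \perp \cdots \perp c_r(m\times\varphi_r)$, noting that $m\times\varphi_i$ is still $v$-unimodular with residue form $m\times\ovl{\varphi}_i^v$, and taking $m$ to be a common torsion index of the $\ovl{\varphi}_i^v$.

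The main obstacle is the isotropy lemma: although the valuation-theoretic separation argument is conceptually clean, it crucially hinges on the observation that for a $v$-unimodular anisotropic form the values taken on primitive vectors lie in $\mg{\mc{O}}_v$, which itself rests on \Cref{P:hensval-res}~$(b)$.
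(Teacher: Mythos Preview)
Your proof is correct and follows essentially the same strategy as the paper: the valuation-separation argument for anisotropy (rescaling to primitive vectors and using that the nonzero terms $c_i\varphi_i(y_i)$ have pairwise distinct valuations modulo $2vK$) is identical to the paper's, and the torsion case is handled the same way by passing to $m\times\varphi$. The only minor organizational difference is in the hyperbolic step: you peel off one hyperbolic plane at a time by induction, whereas the paper takes the Witt decomposition $\varphi_i\simeq m_i\times\hh\perp\psi_i$ of each $\varphi_i$ in one stroke (using \Cref{C:anis-part-unimod} to see that $\psi_i$ is still $v$-unimodular) and then applies the anisotropy result once to $c_1\psi_1\perp\ldots\perp c_r\psi_r$; both routes are equally valid and rest on the same core lemma.
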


\begin{proof}
    Suppose that $\ovl{\varphi}_i^v$ is isotropic for some $i\in\{1,\dots,r\}$.
    Then $\varphi_i$ is isotropic, by \Cref{P:hensval-res}~$(b)$, and hence $\varphi$ is isotropic.
    
    Suppose now that $\ovl{\varphi}_i^v$ is anisotropic for $1\leq i\leq r$.
    Then $\varphi_1,\dots,\varphi_r$ are anisotropic and represent only elements 
    of $\mg{K}$ that lie in $v^{-1}(2vK)$.
    Consider $x\in K^n\setminus\{0\}$ for $n=\dim(\varphi)$ and let $a=\varphi(x)$.
    As $\varphi_1,\dots,\varphi_r$ are  anisotropic and $\varphi\simeq c_1\varphi_1\perp\ldots\perp c_r\varphi_r$, we obtain that $a=c_1a_1+\ldots+c_ra_r$ for some $(a_1,\dots,a_r)\in K^r\setminus\{0\}$ with $v(a_1),\dots,v(a_r)\in 2vK\cup\{\infty\}$.
    For $1\leq i<j\leq r$, we have $v(c_i)\not\equiv v(c_j)\mod 2vK$,  so either $v(c_ia_i)\neq v(c_ja_j)$ or $a_i=a_j=0$. As $(a_1,\dots,a_r)\neq 0$, we conclude that 
    $v(a)=\min\{v(a_ic_i)\mid 1\leq i\leq r\}<\infty$, whereby $a\neq 0$.
    This argument shows that $\varphi$ is anisotropic.

Having thus shown the equivalence for anisotropy, we will use it now to establish the equivalence for hyperbolicity.
For $i\in\{1,\dots,r\}$, let $\psi_i$ denote the anisotropic part of $\varphi_i$ and let $m_i\in\nat$ be such that $\varphi_i\simeq m_i\times\hh\perp\psi_i$.
Set $m=m_1+\ldots+m_r$ and $\psi=c_1\psi_1\perp\ldots\perp c_r\psi_r$.
Then $\varphi\simeq m\times \hh\perp \psi$.
By \Cref{C:anis-part-unimod}, $\psi_1,\dots,\psi_r$ are $v$-unimodular.
It follows by the above that $\psi$ is anisotropic.
Therefore $\varphi$ is hyperbolic if and only if $\psi$ is trivial, if and only if 
$\psi_1,\dots,\psi_r$ are all trivial.
Furthermore, for $1\leq i\leq r$, we have by part $(c)$ of \Cref{P:hensval-res} that $\ovl{\varphi}_i^v$ is hyperbolic if and only if $\varphi_i$ is hyperbolic, which is if and only if the anisotropic part $\psi_i$ is trivial.
This establishes the equivalence for hyperbolicity.

Consider now $m\in\nat$.
Note that $m\times \varphi\simeq c_1(m\times \varphi_1)\perp\ldots\perp c_r(m\times \varphi_r)$ and 
that $m\times \varphi_i$ is $v$-unimodular for $1\leq i\leq r$.
Having this for all positive integers~$m$, the equivalence for being torsion follows from that for hyperbolicity.
\end{proof}

Preceding treatments of quadratic forms over complete nondyadically discretely valued (or more generally $2$-henselian nondyadically valued) fields often focussed more on describing the structure of the Witt ring of quadratic forms than on determining isotropy etc.~ of individual quadratic forms. While we do not use the Witt ring in the present article,  we wish to mention the following direct consequence of \Cref{T:Durfee}, which relates to the Witt ring point of view.

Two regular quadratic forms $\varphi$ and $\psi$ over a field of characteristic different from $2$ are called \emph{Witt equivalent} if their
anisotropic parts are isometric, which is if and only if
 $\varphi \perp -\psi$ is hyperbolic.

\begin{cor}
Assume that $1+\mfm_v\subseteq\sq{K}$.
    Let $r\in\nat$, $c_1,\dots,c_r\in \mg{K}$ with $v(c_i)\not\equiv v(c_j)\bmod 2vK$ for $1\leq i<j\leq n$ and let $\vartheta_1,\psi_1,\dots,\vartheta_r,\psi_r$ be 
    $v$-unimodular quadratic forms over $K$. 
    Then the forms $c_1\vartheta_1\perp\ldots\perp c_r\vartheta_r$ and $ c_1\psi_1\perp\ldots\perp c_r\psi_r$  over $K$ are Witt equivalent if and only if, for $1\leq i\leq r$, the forms $\ovl{\vartheta}_i$ and $\ovl{\psi}_i$ over $Kv$ are Witt equivalent.
\end{cor}
\begin{proof}
    Letting $\varphi_i=\vartheta_i\perp-\psi_i$ for $1\leq i\leq r$, the statement follows from the criterion for the hyperbolicity of the quadratic form  $\varphi=c_1\varphi_1\perp\ldots\perp c_r\varphi_r$ over~$K$ which is contained in \Cref{T:Durfee}.
\end{proof}

\begin{rem}\label{rem:Durfee}
When restricting attention to complete discrete valuation rings and their fraction fields, most (if not all) of the results in this section are well-known.
To the best of our knowledge these first appeared explicitly in Durfee's doctoral research \cite{Dur48} for fields with a complete rank-$1$ valuation.
Inspection reveals that Durfee's proofs (and other versions found later) mostly work without changes for general valued fields $(K, v)$ with $1 + \mf{m}_v \subseteq K^{\times 2}$ and $2 \in \mc{O}_v^\times$, thus in particular for non-dyadic henselian valued fields, which would be sufficient for the purposes in the rest of this article.
For our own and the reader's convenience, we have included here an elementary and self-contained exposition which postpones adding the assumption $1 + \mf{m}_v \subseteq K^{\times 2}$ until the very end.
\end{rem}

%%%%%%%%%%%%%%%%%%%%%%%%%%%%%%%%%%%%%%%%%%%
\section{The generalised $u$-invariant} %%%
%%%%%%%%%%%%%%%%%%%%%%%%%%%%%%%%%%%%%%%%%%%
\label{S:genu}

By the Artin-Schreier Theorem, a field $K$ admits a field ordering if and only if~$-1$ is not a sum of squares in $K$; see e.g.~\cite[Chap.~VIII, Theorem 1.10]{Lam05}.
This motivates to call the field $K$ \emph{real} in this case, and \emph{nonreal} otherwise.
Given a field ordering, a quadratic form is assigned a corresponding signature, and there is a notion of being positive or negative definite at this ordering. 
A quadratic form which is positive definite at some ordering cannot be universal, since it cannot represent $-1$. Therefore the  definition of the $u$-invariant formulated in the introduction for nonreal fields would have value $\infty$ for any real field.
Elman and Lam \cite{EL73} suggested a variation to the definition of the $u$-invariant that is meaningful also for real fields while not changing its meaning for nonreal fields.
We refer to \cite[Chap.~8]{Pfi95} for a general discussion of the $u$-invariant, including several variations on how to define it for real fields on the one hand and for fields of characteristic $2$ on the other hand.

In this section, we discuss relations between the values of the $u$-invariant for a given valued field and its residue field. 
There seems a want in the literature for such a treatment covering both real fields and fields of characteristic $2$ in a compatible way.
Most of the statements here are well-known in the case where the valued field is nonreal and the residue field is of characteristic different from~$2$. 
Including the case of real fields in the discussion leads to some technicalities, which can be skipped if one restricts the attention to nonreal fields.
\medskip

Let $K$ be a field.
Recall that, if $\car(K) \neq 2$, we defined a quadratic form~$\varphi$ over $K$ to be a \emph{torsion form} if $m\times\varphi=\varphi\perp\ldots\perp\varphi$ is hyperbolic for some $m\in\nat^+$.
When $\car(K)=2$, we take the convention that every quadratic form over $K$ is torsion. 
The \emph{general $u$-invariant of $K$} as defined by Elman and Lam in~\cite{EL73} is given by 
$$u(K) \, = \, \sup\{\dim(\varphi)\mid \varphi\mbox{ anisotropic torsion form over }K\}\,\in \nat\cup\{\infty\}.$$
Note that, if $K$ is nonreal, then every anisotropic quadratic form over $K$ is a torsion form, and hence in this case the definition can be simplified to 
$$u(K) \, = \, \sup\{\dim(\varphi)\mid \varphi\mbox{ anisotropic quadratic form over }K\},$$
which corresponds to how the $u$-invariant was originally defined by Lenz. 
This further agrees with the definition mentioned in the introduction when $K$ is nonreal of characteristic different from $2$.

Pfister's Local-Global Principle \cite[Chap.~VIII, Sect.~3]{Lam05} states that, over a real field, a regular quadratic form is torsion if and only if it has signature zero at every field ordering.  Therefore, if~$K$ is real, then the $u$-invariant of $K$ has to be even or infinite. 

The following construction will be used to produce certain torsion forms in characteristic different from $2$.

\begin{lem}\label{L:torsion}
        Assume that $\car(K)\neq 2$. 
        Let $\vartheta$ be a regular quadratic form over~$K$.
        Let $c\in\mg{K}$ and $k\in\nat$ be such that $c$ is a sum of $2^k$ squares in $K$.
        Then $\vartheta\perp -c\vartheta$ is a torsion form. More precisely, $2^k\times (\vartheta\perp -c\vartheta)$ is hyperbolic. 
\end{lem}
\begin{proof}
    The hypothesis implies that the form $2^k \times \langle 1, -c \rangle_K$ is isotropic, and since it is a Pfister form, it follows by \cite[Theorem X.2.9]{Lam05} that it is hyperbolic.
    Consequently, $2^k\times \la a,-ac\ra$ is hyperbolic for every $a\in\mg{K}$.
    Let $n=\dim(\vartheta)$.
    By \Cref{C:local-ring-chain-equivalence-step}, there exist $a_1,\dots,a_n\in\mg{K}$ such that $\vartheta\simeq \la a_1,\dots,a_n\ra_K$.
    Then $2^k\times(\vartheta\perp-c\vartheta)\simeq \bigperp_{i=1}^n (2^k\times \la a_i,-ca_i\ra_K)$, which is hyperbolic.
\end{proof}

Let $v$ in the sequel be a valuation on $K$.

\begin{lem}\label{lift}
    Let $n\in\nat$ and let $\psi$ be an $n$-dimensional quadratic form over~$Kv$.
    Then there exists an $n$-dimensional quadratic form $\Psi$ over $\mc{O}_v$ which reduces modulo $\mfm_v$ to $\psi$.
    Moreover, if $\psi$ is anisotropic, then any such form $\Psi$ is anisotropic over $K$ and satisfies $v(\Psi(x))\in 2vK$ for every $x\in K^n\setminus\{0\}$.
\end{lem}
\begin{proof}
    See \cite[Proposition 4.2]{BDD23}.
\end{proof}

In some cases where we will use \Cref{lift}, we want the lifted form $\Psi$ in addition to be a torsion form over $K$. 
To achieve this some extra considerations are necessary when $K$ is real.

For a ring $R$ we denote by $\sos{R}$ the set of elements of $R$ which are sums of squares in $R$.

\begin{lem}\label{torsbinlift}
Assume that $Kv$ is nonreal and $\car(K)\neq 2$. 
The following hold:
\begin{enumerate}[$(a)$]
    \item
Every element of $Kv$ is the residue modulo $\mfm_v$ of an element of $\mc{O}_v\cap \sos{K}$.
\item Every anisotropic even-dimensional  quadratic form over $Kv$ is the reduction modulo $\mfm_v$ of a 
quadratic form defined over $\mc{O}_v$ which is an anisotropic  
torsion form over $K$.
\end{enumerate}
\end{lem}
\begin{proof} 
$(a)$  
If $\car(Kv)=2$, then for $x\in\mc{O}_v$ we have $x+2(x^2+1)\equiv x\bmod \mfm_v$  and $x+2(x^2+1)=2(x+\frac{1}4)^2+\frac{15}8\in \sos{K}$.
If $Kv$ is nonreal with $\car(Kv)\neq 2$, then there exists $z\in\sos{\mc{O}}$ with $z\equiv -1\bmod\mfm_v$, and 
for $x\in\mc{O}_v$ we obtain that $\frac{1}4((x+1)^2+z(x-1)^2)\in\sos{\mc{O}_v}$ and
$x\equiv \frac{1}4((x+1)^2+z(x-1)^2)\bmod\mfm_v$.

$(b)$
Consider an arbitrary anisotropic $2$-dimensional form $\beta$ over $Kv$.
By $(a)$, we may choose $a,b,c\in \mc{O}_v\cap \sos{K}$ such that
the $2$-dimensional quadratic form $B=aX_1^2+bX_1X_2-cX_2^2$ over $\mc{O}_v$ reduces modulo $\mfm_v$ to $\beta$.
Since $\beta$ is anisotropic over $Kv$, so is $B$ over $K$.
Note that
$B\simeq a\la 1,-(b^2+4ac)\ra$ over $K$ and $b^2+4ac\in\sos{K}$, whence it follows by \Cref{L:torsion} that $B$ is a torsion form.
This shows the claim for $2$-dimensional quadratic forms.
By \cite[Propositions 7.29 and 7.31]{EKM08} 
 every even-dimensional quadratic form is an orthogonal sum of $2$-dimensional quadratic forms. Hence the statement follows in general.
\end{proof}

The following is a partial extension of \cite[Theorem 5.2]{BL13}, including now the case of residue characteristic $2$.

\begin{prop}\label{P:BL}
We have 
$$u(K) \geq [vK : 2vK]\cdot 2\lfloor \mbox{$\frac{u(Kv)}{2}$}\rfloor\,.$$
Moreover, $$u(K)\geq [vK:2vK]\cdot u(Kv)$$ 
holds under any of the following conditions:
$(i)$\, $K$ is nonreal; 
$(ii)$\, $Kv$ is real;
$(iii)$\,~$u(Kv)$ is even;
$(iv)$ $v(\sos{K}\setminus\{0\})\not\subseteq 2vK$.
\end{prop}
\begin{proof}
    In the case where $Kv$ is real, the statement is fully covered by \cite[Theorem~5.2]{BL13}. 
%    Hence we bound ourselves to consider the complementary situation and assume that $Kv$ is nonreal.
    Hence we only consider the complementary situation and assume from now on that $Kv$ is nonreal.
    
    Let $n,r\in\nat$ arbitrary with $2^r\leq [vK:2vK]$ and $n\leq u(Kv)$, and assume that~$n$ is even in the case where $K$ is real and $v(\sos{K}\setminus\{0\})\subseteq 2vK$. 
    We will show that $u(K)\geq 2^rn$.
    This will establish the statement.
    
    By the choice of $n$ there exists an anisotropic $n$-dimensional quadratic form~$\varphi$ over $Kv$.
    Then $\varphi$ is the reduction modulo $\mfm_v$ of 
    an $n$-dimensional quadratic form~$\Phi$ over $\mc{O}_v$, and by \Cref{torsbinlift} we may choose $\Phi$ to be a torsion form over~$K$ when $n$ is even.
    Also if $K$ is nonreal, $\Phi$ is torsion.
    In any case, since $\varphi$ is anisotropic, it follows by \Cref{lift} that $v(\Phi(x))\in 2vK$ for every $x\in K^n\setminus\{0\}$.

    We may assume that $r\geq 1$ unless $vK=2vK$.
    By the choice of $r$, we may choose $c_1, \ldots, c_r \in \mg{K}$ such that $v(c_1),\dots,v(c_r)$ represent $\ff_2$-linearly independent classes in $vK/2vK$, and we may choose to have $c_1\in -\sos{K}$ except when $v(\sos{K}\setminus\{0\})\subseteq 2vK$. 
    We denote by $\mc{P}$ the power set of $\{1,\dots,r\}$, and for $I\in\mc{P}$ we set $c_I=\prod_{i\in I}c_i$.
    We consider the quadratic form $\Psi = \bigperp_{I\in\mc{P}} c_I\Phi$ over~$K$.
    It follows by the properties of $\Phi$ and $c_1,\dots,c_r$ that $\Psi$ is anisotropic.
    We claim that $\Psi$ is torsion.
    This is obvious when $\Phi$ is torsion. 
    Assume that $\Phi$ is not torsion. In this case  $n$ is odd, $vK\neq 2vK$, $r\geq 1$ and~$-c_1\in\sos{K}$, and since $\Psi=\Theta\perp c_1\Theta$ with $\Theta= \bigperp_{I\in\mc{P'}} c_I\Phi$ where $\mc{P}'$ is the power set of $\{2,\dots,r\}$, we conclude by \Cref{L:torsion} that $\Psi$ is torsion.
    Hence $u(K)\geq \dim(\Psi) = 2^r n$.
    Having this for all $n$ and $r$ as above, the statement follows.
\end{proof}

We obtain the following lower bound for  the growth of the $u$-invariant under finitely generated field extensions in terms of the transcendence degree. 
The argument is well-known at least for diagonal quadratic forms over nonreal fields, see~e.g.~\cite[Cor.~2~$(ii)$]{Pum09}.

\begin{cor}\label{fg-u-lowerbound}
    Let $F/K$ be a finitely generated field extension and let $n$ be its transcendence degree. 
    Then there exists a finite field extension $L/K$ such that $u(F)\geq 2^nu(L)$.
\end{cor}
\begin{proof}
    The statement is trivial when $n=0$.
    Consider the case where $n=1$. 
    In this case $F/K$ is a function field in one variable.
    We pick any discrete valuation~$v$ on $F$ which is trivial on $K$.
    We set $L=Fv$.
    Then $L/F$ is a finite field extension and $[\zz:2\zz]=2$.
    Moreover, if $F$ is real but $Fv$ is nonreal, then by \cite[Proposition~4.2]{BGVG14}, there exists $x\in \sos{F}$ with $v(x)=1$.
    Hence, in any case we obtain by \Cref{P:BL} that $u(F)\geq 2u(L)$. 
    This proves the statement for $n=1$. The general statement follows by induction on $n$.
\end{proof}

Determining the $u$-invariant of a valued field in terms of the value group and the $u$-invariant of the residue field is possible in residue characteristic different from $2$ provided that roots of quadratic equations can be lifted from the residue field. 
This is also well-known over nonreal fields, see e.g.~\cite[Theorem 4]{Pum09}.

\begin{prop}\label{P:u-hens}
Assume that $1+\mfm_v\subseteq\sq{K}$ and $\car(Kv)\neq 2$. 
Then $$u(K)= [vK:2vK]\cdot u(Kv)\,.$$
\end{prop}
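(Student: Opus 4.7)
The plan is to prove the equality $u(K) = [vK:2vK]\cdot u(Kv)$ by establishing the two inequalities separately. The lower bound will follow from \Cref{P:BL} once its exceptional case is ruled out, and the upper bound from the decomposition proposition immediately preceding \Cref{T:Durfee}, combined with \Cref{T:Durfee} itself.

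For the lower bound, I first verify that under the hypothesis $1+\mfm_v\subseteq\sq{K}$, nonreality transfers from $Kv$ to $K$. Indeed, if $-1 = \sum_{i=1}^n \ovl{a}_i^2$ in $Kv$ for some $a_i\in\mc{O}_v$, then $-\sum_{i=1}^n a_i^2 \in 1+\mfm_v$, and by hypothesis this equals $b^2$ for some $b\in K$. Since $-\sum_{i=1}^n a_i^2$ has residue $1 \neq 0$, we have $b\in\mg{K}$, and dividing yields $-1 = \sum_{i=1}^n (a_i/b)^2$. Hence the exceptional case in \Cref{P:BL}, where $K$ is real while $Kv$ is nonreal, cannot arise, and the proposition directly gives $u(K) \geq [vK:2vK]\cdot u(Kv)$.

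For the upper bound, I take any anisotropic torsion form $\varphi$ over $K$ and apply the decomposition proposition preceding \Cref{T:Durfee} with $\mc{C}$ a set of representatives of $\mg{K}/v^{-1}(2vK)$. Since $v$ induces an isomorphism $\mg{K}/v^{-1}(2vK) \simeq vK/2vK$, we have $|\mc{C}| = [vK:2vK]$. The decomposition gives $\varphi \simeq c_1\varphi_1\perp\ldots\perp c_r\varphi_r$ for certain distinct $c_i\in\mc{C}$ and $v$-unimodular forms $\varphi_i$, so in particular $r \leq [vK:2vK]$. By \Cref{T:Durfee}, each residue form $\ovl{\varphi}_i^v$ is both anisotropic and torsion over $Kv$, hence $\dim(\varphi_i) = \dim(\ovl{\varphi}_i^v) \leq u(Kv)$. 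Summing over $i$ yields $\dim(\varphi) \leq r\cdot u(Kv) \leq [vK:2vK]\cdot u(Kv)$, and taking the supremum over $\varphi$ gives the desired upper bound. I do not anticipate a substantial obstacle, since both directions are direct applications of the material developed in Sections~\ref{S:DT} and~\ref{S:genu}; the only subtle point is ruling out the realness exception in \Cref{P:BL}.
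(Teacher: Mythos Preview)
Your proof is correct and follows essentially the same approach as the paper: the lower bound via \Cref{P:BL} after ruling out the real/nonreal mismatch, and the upper bound via the residue decomposition together with \Cref{T:Durfee}. The only cosmetic difference is that the paper cites \Cref{P:hensval-res}\,$(a)$ to see that $K$ is real if and only if $Kv$ is, whereas you verify directly that nonreality passes from $Kv$ to $K$; both arguments are equivalent here.
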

\begin{proof}    
In view of \Cref{P:hensval-res}~$(a)$, the hypothesis implies that $K$ is real if and only if $Kv$ is real.
    In particular, \Cref{P:BL} yields that 
    $$u(K)\geq [vK:2vK]\cdot u(Kv)\,.$$
    It remains to show the converse inequality.

    For this, we set $s=[vK:2vK]$ and $u=u(Kv)$ and assume without loss of generality that $s<\infty$.
    Note that $v^{-1}(2K)=\mg{\mc{O}}_v\sq{K}$. 
    We fix representatives $c_1,\dots,c_s\in\mg{K}$ of the different classes of $\mg{K}/\mg{\mc{O}}_v\sq{K}$.
    Let $\varphi$ now be an anisotropic torsion form over $K$.
    We write $\varphi\simeq c_1 \varphi_1 \perp \ldots \perp c_s\varphi_s$ where $\varphi_1,\dots,\varphi_s$ are $v$-unimodular forms over $K$.
    Then $\ovl{\varphi}_1^v,\dots,\ovl{\varphi}_s^v$ are anisotropic torsion forms over $K$, by \Cref{T:Durfee}.
    Hence $\dim(\varphi_i)=\dim(\ovl{\varphi}_i^v)\leq u$ for $1\leq i\leq s$, whereby $\dim(\varphi)=\sum_{i=1}^s\dim(\varphi_i)\leq s\cdot u$.
    This shows that $u(K)\leq s\cdot u$.
\end{proof}

A similarly general statement covering the case of residue characteristic $2$ seems out of reach.
Only the following statement is known to hold without restriction on the residue characteristic.
One may trace it back to Kaplansky \cite{Kap53}, who observes that $u(K)=2u(k)$ in the case where $K=k(\!(t)\!)$ for a field $k$ with $\car(k)\neq 2$.

If $\car(K)=2$, we denote by $[K:K^{(2)}]$ the degree of imperfection of $K$, i.e.~the degree of the field extension $K/K^{(2)}$ where $K^{(2)}=\{x^2\mid x\in K\}$.

\begin{thm}[Kaplansky; Mamone-Moresi-Wadsworth]\label{T:MMW}
    Assume that $v$ is discrete and  henselian.
    If $\car(K)=2$, then assume further that $[K:K^{(2)}]=2[Kv:Kv^{(2)}]$.
    Then $u(K)=2u(Kv)$.
\end{thm}
\begin{proof}
    When $\car(Kv)\neq 2$ this follows from \Cref{P:u-hens}.
    See \cite[Theorem 2]{MMW91} for the case when $\car(Kv)=2$.
\end{proof}

Note that, if $\car(K)=2$ and $v$ is complete and discrete, then $[K:K^{(2)}]=2[Kv:Kv^{(2)}]$ holds, whereby \Cref{T:MMW} applies.
In a more general context, when $\car(Kv)=2$, one may have an inequality $u(K) > [vK : 2vK] \cdot u(Kv)$, even when $(K, v)$ is henselian and complete.
We sketch an example, where we omit the verification of valuation-theoretic details.
\begin{ex}
Let $\ovl{\ff}_2$ denote the algebraic closure of $\ff_2$ and $K$ the perfect closure (maximal purely inseparable extension) of $\ovl{\ff}_2(t)$ or of $\ovl{\ff}_2(\!(t)\!)$. That is, $K = \overline{\ff}_2(t^{2^{-n}}\vert\, n\in\nat)$ or $K=\overline{\ff}_2(\!(t)\!)(t^{2^{-n}}\vert \,n\in\nat)$, respectively. 
Let $v$ denote the unique extension of the $t$-adic valuation on $\ovl{\ff}_2(t)$ and $\ovl{\ff}_2(\!(t)\!)$ to $K$.
We also denote the natural extension of this valuation to the completion $K^v$ by $v$.
Note that $(K^v, v)$ is a complete rank-$1$ valued field. In particular, it is henselian.
Since $K$ is perfect, so is $K^v$, whereby trivially $1 + \mf{m}_{v} \subseteq \sq{(K^v)}$.
The value group $vK$ is $2$-divisible, and the residue field $Kv = \overline{\ff}_2$ is algebraically closed, whereby $[vK : 2vK] = 1 = u(Kv)$.
But $u(K) > 1$, because the quadratic form $X_1^2 + X_1X_2 + t^{-1}X_2^2$ is anisotropic over $K^v$.

With a bit more work, an example can also be constructed of a complete henselian valued field $(K, v)$ with $\car(K) = 0$, $\car(Kv) = 2$, and $1 + \mf{m}_v \subseteq \sq{K}$, whereas $u(K) > [vK : 2vK] \cdot u(Kv)$.
\end{ex}

%%%%%%%%%%%%%%%%%%%%%%%%%%%%%%%%%%%%%%%%%%%%%%%%%%
\section{The generalised strong $u$-invariant} %%%
%%%%%%%%%%%%%%%%%%%%%%%%%%%%%%%%%%%%%%%%%%%%%%%%%%
\label{S:genu-strong}

We are now prepared to study the $u$-invariant of  function fields in one variable.
Let $K$ be a field. 
Extending \cite[Sect.~5]{BGVG14} (by assuming nothing further on $K$),
we define 
$$\su(K)\,=\, \half\sup\,\{u(F)\mid F/K \mbox{ function field in one variable}\}\,\in \mbox{$\frac{1}{2} $}\nat\cup\{\infty\}.$$
We call $\su(K)$ the \emph{generalised strong $u$-invariant of $K$}.
For nonreal fields of characteristic different from $2$, $\su$ coincides with the `strong $u$-invariant' defined in ~\cite{HHK09}.

\begin{prop}\label{P:u-rafufi}
    Let $L/K$ be a finite field extension.
    Then $$u(L)\leq \hbox{$\frac{1}2$}u(L(X))\leq \su(K)\,.$$
    If $L/K$ is a simple extension or $\car(K)\neq 2$, then
    $u(L)\leq \hbox{$\frac{1}2$}u(K(X))\leq \su(K)$.
\end{prop}
\begin{proof}
    Since $L$ is the residue field of a discrete valuation on $L(X)$ and since $L(X)$ is real if and only if $L$ is real, the first inequality follows from \Cref{P:BL}.
    The second inequality is clear from the definition of $\su(K)$. 
    The refinement under the assumption that $L/K$ is simple or $\car(K)\neq 2$ now follows by \cite[Proposition~6.1]{BL13} and its proof.
\end{proof}

\begin{ex}\label{EX:CM}
    In \cite{CTM04}, an example is constructed of a field $K$ such that $u(L)=2$ holds for every finite field extension $L/K$, while there exists a pair of quadratic forms in $5$ variables over $K$ having no nontrivial common zero. As explained in \cite[Introduction]{Bec18}, this implies that $u(K(X))\geq 6$.
    Hence in this case we obtain that $\su(K)\geq \frac{1}2u(K(X))\geq 3>2=\sup\{u(L)\mid L/K\mbox{ finite field extension}\}$.
\end{ex}

\begin{prop}\label{exact-uinv-ffiov}
    Assume that $u(L) = \su(K)$ for every finite field extension $L/K(\sqrt{-1})$.
    Then $u(F) = 2\,\su(K)$ for every function field in one variable $F/K$.
\end{prop}
\begin{proof}
Consider a function field in one variable $F/K$.
By the definition of $\su(K)$, we have $u(F) \leq 2\su(K)$.
We fix $x\in F$ transcendental over $K$.
There exists a discrete valuation $w$ on $F$ which is trivial on $K$ and with $w(x^2+1)>0$. Then $Fw$ is a finite field extension of $K(\sqrt{-1})$, whence $u(Fw) = \su(K)$ by the hypothesis.
Since $[wF : 2wF] = 2$, we conclude by \Cref{P:BL} that $u(F) \geq 2\su(K)$. Therefore $u(F) =2\su(K)$.
\end{proof}

The following is a partial extension of \cite[Theorem]{EW87} with no assumption on $\car(K)$. It is also a variation of \cite[Theorem 7.3]{BDD23}.
\begin{prop}\label{P:herquacl}
The following are equivalent:
\begin{enumerate}[$(i)$]
    \item\label{it:herquacl1} $K(\sqrt{-1})$ has no finite field extension of even degree,
    \item\label{it:herquacl2} $u(F)=2$ for every function field in one variable $F/K$,
    \item\label{it:herquacl3} $u(F)< 4$ for some function field in one variable $F/K$.
\end{enumerate}
If these equivalent conditions hold, then $\su(K)=1$, otherwise $\su(K)\geq 2$.
\end{prop}
\begin{proof}
Since any nontrivial torsion form over a field $F$ is indefinite at every field ordering of $F$, the implication 
$({\it\ref{it:herquacl1} \Rightarrow \ref{it:herquacl2}})$ follows from \cite[Theorem~7.3]{BDD23}.
Since 
 $({\it\ref{it:herquacl2} \Rightarrow \ref{it:herquacl3}})$ is obvious, it therefore remains to prove $({\it\ref{it:herquacl3}\Rightarrow\ref{it:herquacl1}})$.
 
Let $F/K$ be an arbitrary function field in one variable.
Assume that $K(\sqrt{-1})$ has a finite extension of even degree.
Then the proof of \cite[Theorem 7.3]{BDD23} shows that there exists an anisotropic totally indefinite $2$-fold Pfister form $\pi$ over $F$.
Then $\pi$ has signature zero at every ordering of $F$.
Using Pfister's Local-Global Principle \cite[Chap. VIII, Theorem 6.14]{Lam05} if $K$ is real, it follows that $\pi$ is a torsion form.
We conclude that $u(F)\geq \dim(\pi) \geq 4$.
\end{proof}

Note that by its definition, $\su(K)$ is always in $\frac{1}2\nat\cup\{\infty\}$, but we do not know whether it is always in $\nat$. 
On the other hand, we do not know of any example where $\su(K)$ is not a power of $2$.

In the following example and the subsequent statement, we use the concept of $\mc{C}_i$-fields (where $i\in\nat$) introduced by S.~Lang in \cite{Lan52}.

\begin{ex}\label{Ex;Ci}
    Let $i\in\nat$ be such that $K$ is a $\mc{C}_i$-field. Then $u(K)\leq 2^i$.
    Moreover, for any $j\in\nat$ and any field extension $L/K$ of transcendence degree $j$, \cite[Theorem 2a]{Nag57} (which is \cite[Theorem~6]{Lan52} without the hypothesis on `normic forms') yields that $L$ is a $\mc{C}_{i+j}$-field, whereby $u(L)\leq 2^{i+j}$.
    Applying this with $j=1$, we obtain that $\su(K)\leq 2^i$.
\end{ex}

\begin{thm}[Lang]\label{T:Lang}
Let $(K,v)$ be a henselian discretely valued field with $\car(K)=0$ and algebraically closed residue field $Kv$.
Then $K$ is a $\mc{C}_1$-field. 
In particular, for any $i\in\nat$, we have $u(F)\leq 2^{i+1}$ for any field extension $F/K$ of transcendence degree $i$. 
Furthermore $\su(K) = 2$.
\end{thm}
\begin{proof}
    That $K$ is a $\mc{C}_1$-field is proven in \cite[Theorem 10]{Lan52} in the case where $(K,v)$ is complete.
    Let $K^v$ denote the completion of $K$ with respect to $v$.
    By \cite[Theorem 5.9]{Kuh16}, the hypotheses on $(K,v)$ imply that $K$ is existentially closed in~$K^v$.
    In other terms, every multivariate polynomial over $K$ has a zero over $K$ provided that it has a zero over $K^v$. 
    Since $K^v$ is a $\mc{C}_1$-field, we conclude that~$K$ is a $\mc{C}_1$-field.

    As $K$ carries a discrete valuation, $K(\sqrt{-1})$ has a quadratic field extension, so \Cref{P:herquacl} yields that $\su(K)\geq 2$.
    The rest follows by \Cref{Ex;Ci}.
\end{proof}

\begin{prop}\label{P:val-su-inequality}
    Let $v$ be a valuation on $K$.
    Then $$\su(K)\geq [vK:2vK]\cdot \su(Kv)\,.$$
\end{prop}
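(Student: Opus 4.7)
The plan is, for each function field $E/Kv$ in one variable, to first construct a function field $F/K$ in one variable equipped with a valuation $w$ extending $v$ such that $Fw = E$ and $wF = vK$; then to apply \Cref{P:u-hens} to the henselization $(F^h, w^h)$ to obtain anisotropic torsion forms over $F^h$ of dimension $[vK:2vK] \cdot u(E)$; and finally to descend such forms to a finite subextension of $F$ inside $F^h$, which is itself a function field in one variable over $K$.

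The trivial cases are handled separately: if $\su(Kv) = 0$ the inequality is vacuous, while if $\su(Kv) = \infty$ or $[vK:2vK] = \infty$ one shows $\su(K) = \infty$ by applying \Cref{P:BL} to $K(t)$ equipped with the Gauss extension of $v$. In the main case, $s := [vK:2vK] < \infty$ and $\su(Kv) < \infty$; fix a function field $E/Kv$ in one variable. Assuming $E/Kv$ is separably generated (the generic case; the remaining imperfect-residue case needs a separate reduction), choose $\bar t \in E$ with $E/Kv(\bar t)$ finite separable, and let $t$ be an indeterminate over $K$. The Gauss extension $w_0$ of $v$ to $K(t)$ has residue field $Kv(\bar t)$ and value group $vK$. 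Let $f \in Kv(\bar t)[y]$ be the minimal polynomial of a primitive element of $E/Kv(\bar t)$, and lift it monically to $\tilde f \in K(t)[y]$. Since $f$ is separable and irreducible, Hensel's lemma ensures $\tilde f$ is irreducible over the henselization of $K(t)$ at $w_0$, hence already over $K(t)$. Set $F = K(t)[y]/(\tilde f)$: this is a function field in one variable over $K$, and the valuation $w$ on $F$ extending $w_0$ satisfies $Fw = E$ and $wF = vK$.

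By \Cref{P:u-hens} applied to the henselization $(F^h, w^h)$, one obtains $u(F^h) = s \cdot u(E)$. Pick any anisotropic torsion form $\psi$ over $F^h$ of dimension $n$ approaching $u(F^h)$, together with a positive integer $m$ for which $m\psi$ is hyperbolic over $F^h$. The finitely many elements of $F^h$ occurring in a diagonalization of $\psi$ and in an isometry witnessing the hyperbolicity of $m\psi$ all lie in some finite subextension $F'/F$ inside $F^h$; then $F'/K$ is still a function field in one variable. Over $F'$, the form $\psi$ is defined, remains anisotropic (anisotropy descends under the inclusion $F' \subseteq F^h$), and $m\psi$ is hyperbolic. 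Hence $\psi$ is anisotropic torsion over $F'$ of dimension $n$, giving $u(F') \geq n$. Letting $n$ approach $u(F^h) = s \cdot u(E)$ and then taking the supremum over $E$, one concludes $2\su(K) \geq s \cdot 2\su(Kv)$, which is the claim.

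The main technical subtlety I anticipate is the construction of $F$ with $Fw = E$ exactly, when $Kv$ is imperfect and $E/Kv$ is not separably generated in positive residue characteristic; this is resolved by a reduction to a separably generated subextension of $E$, introducing only negligible loss in the supremum. A secondary point is the descent of the torsion property from $F^h$ to a finite subextension of $F$, which is handled by the finiteness argument above rather than by invoking any Witt-ring kernel computation.
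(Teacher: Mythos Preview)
Your overall strategy---lift $E$ to a function field $F/K$ carrying a valuation $w$ with residue field $E$, then bound $u(F)$ from below in terms of $u(E)$---is exactly the paper's. The execution, however, takes a detour that the paper avoids entirely. The key simplification you miss is that \Cref{P:BL} applies to an \emph{arbitrary} valued field, with no henselian hypothesis. Once $(F,w)$ with $Fw\simeq E$ is in hand, \Cref{P:BL} gives $u(F)\geq [wF:2wF]\cdot u(Fw)$ directly; the exceptional clause there is ruled out because the lifting lemma the paper invokes, \cite[Lemma~5.6]{BDGMZ}, arranges for $F$ to be nonreal whenever $E$ is. Your three-step passage through the henselization $F^h$, \Cref{P:u-hens}, and descent to a finite subextension $F'\subseteq F^h$ is correct, but superfluous. (Incidentally, your route does buy you one thing: it sidesteps the real/nonreal caveat in \Cref{P:BL} without needing the extra ``nonreal'' clause from the lifting lemma.)

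A related simplification: you work to achieve $wF=vK$ exactly, and this is what forces you into the separable-generation issue. The paper does not need equality of value groups. Once $Fw=E$ is transcendental over $Kv$, \Cref{P:val-fufi-ext-cases} forces $wF/vK$ to be finite, and \Cref{P:torsion-quotient-torsionfree-groups} then yields $[wF:2wF]=[vK:2vK]$. So any lift with the correct residue field suffices, irrespective of what happens to the value group, and the separate treatment of the infinite-index case is also absorbed into the same argument.

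Your acknowledged gap for non-separably-generated $E$ is genuine in the sense that the sketched fix (``reduce to a separably generated subextension with negligible loss'') is not justified as written. It can be made to work: if $E_s\subseteq E$ has $E/E_s$ purely inseparable of $p$-power degree with $p=\car(Kv)$ odd, then every entry $a$ of a diagonalization over $E$ satisfies $a\sq{E}=a^{p^n}\sq{E}$ with $a^{p^n}\in E_s$, so the form descends to $E_s$, and injectivity of $W(E_s)\to W(E)$ (Springer for odd degree) shows it stays anisotropic torsion there, giving $u(E_s)=u(E)$. The paper simply bypasses this by citing \cite[Lemma~5.6]{BDGMZ} for the lifting in full generality.
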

\begin{proof}
    Consider an arbitrary function field in one variable $E/Kv$.
    By \cite[Lemma 5.6]{BDGMZ}, there exists a function field in one variable $F/K$ and an extension $w$ of $v$ to $F$ such that $Fw\simeq E$, and if $E$ is nonreal, then so is $F$.
    By \Cref{P:BL}, we obtain that 
    $u(F)\geq [wF:2wF]\cdot u(Fw)$.
    By \Cref{P:val-fufi-ext-cases}, since ${Fw=E}$ and $E/Kv$ is transcendental, the quotient group $wF/vK$ is finite. 
    Hence \Cref{P:torsion-quotient-torsionfree-groups} yields that $[wF:2wF]=[vK:2vK]$.
    Therefore $$[vK:2vK]\cdot u(E) = [wF:2wF]\cdot u(Fw)\leq u(F)\leq 2\su(K)\,.$$
    Having this for all function fields in one variable $E/Kv$, the statement follows.
\end{proof}

In \Cref{T:main-body} we will see that the inequality in \Cref{P:val-su-inequality} becomes an equality when $v$ is henselian.
    
\begin{prop}\label{u-val-geq-ffiov}
Let $v$ be a valuation on $K$ and 
let $F/K$ be a function field in one variable.
Then there exists a function field in one variable $E/Kv$ such that $u(F) \geq 2\cdot [vK : 2vK]\cdot \lfloor \frac{u(E)}{2} \rfloor$, and 
if $F$ is nonreal, then $u(F) \geq [vK : 2vK]\cdot u(E)$.
\end{prop}
\begin{proof}
    We choose a valuation $w$ on $F$ extending $v$ such that 
    $Fw/Kv$ is a function field in one variable.
    (This can be done by fixing $x\in F$ transcendental over $K$, 
    taking the Gauss extension of $v$ to $K(x)$ with respect to~$x$ and then extending this valuation further to $F$.)
    By \Cref{P:val-fufi-ext-cases}, the quotient group $wF/vK$ is finite. Therefore we have $[wF : 2wF] = [vK : 2vK]$, by \Cref{P:torsion-quotient-torsionfree-groups}. 
    We now conclude by \Cref{P:BL}.
\end{proof}

The following proposition considers the behaviour of the invariant $\su$ under algebraic field extensions.
It recovers the fact that $\su(K)$ is also an upper bound on $u(K')$ for all finite field extensions $K'/K$.

\begin{prop}\label{P:odd-su}
    Let $K'/K$ be an algebraic field extension. Then $$u(K')\leq \su(K')\leq \su(K)\,.$$
    Moreover, if $K'/K$ is a direct limit of finite normal extensions of odd degree of~$K$, then $$\su(K')=\su(K)\,.$$
\end{prop}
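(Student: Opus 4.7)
The plan is to establish the three inequalities $u(K') \leq \su(K')$, $\su(K') \leq \su(K)$, and, under the odd-degree hypothesis, $\su(K') \geq \su(K)$.

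For the first inequality, I apply \Cref{P:u-rafufi} to the field $K'$ itself, using $K'$ as a trivial finite extension of $K'$. This yields $u(K'(X)) \geq 2u(K')$. Since $K'(X)/K'$ is a function field in one variable, the definition of $\su(K')$ gives $2\su(K') \geq u(K'(X)) \geq 2u(K')$, hence $\su(K') \geq u(K')$.

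For the second inequality, the strategy is to realise every function field in one variable over $K'$ as a filtered union of function fields in one variable over finite subextensions of $K'/K$. Let $F'/K'$ be such a function field and write $F' = K'(t, \alpha_1, \dots, \alpha_n)$ with $t$ transcendental and each $\alpha_i$ algebraic over $K'(t)$. The finitely many coefficients of the minimal polynomials of $\alpha_1, \dots, \alpha_n$ over $K'(t)$ lie in $K_0(t)$ for some finite subextension $K_0$ of $K'/K$. For every intermediate field $L$ with $K_0 \subseteq L \subseteq K'$ and $[L:K]<\infty$, set $F_L = L(t,\alpha_1,\dots,\alpha_n) \subseteq F'$; then $F_L/L$, and hence $F_L/K$, is a function field in one variable, and $F' = \bigcup_L F_L$. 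Given an anisotropic torsion form $\varphi$ over $F'$, its diagonalisation together with a matrix witnessing the hyperbolicity of $m\times \varphi$ for suitable $m\in\nat^+$ involve only finitely many elements of $F'$, hence all lie in some $F_L$. Then $\varphi$ is an anisotropic torsion form over $F_L$, so $\dim(\varphi)\leq u(F_L)\leq 2\su(K)$. Taking the supremum yields $u(F')\leq 2\su(K)$, and hence $\su(K')\leq \su(K)$.

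For the equality under the odd-degree hypothesis, write $K' = \bigcup_i K_i$ with $K_i/K$ finite normal of odd degree. Given an arbitrary function field in one variable $F/K$, set $F' = F\cdot K'$ (inside a fixed algebraic closure of $F$). Since $K'/K$ is algebraic, $F'/K'$ remains a function field in one variable, and $F' = \bigcup_i F_i$ where $F_i = F\cdot K_i$. Each $F_i/F$ is finite with $[F_i:F]$ dividing $[K_i:K]$, hence of odd degree. By Springer's theorem on odd-degree extensions, any anisotropic quadratic form $\varphi$ over $F$ remains anisotropic over each $F_i$, and so also over the filtered union $F'$ (isotropy being a finite condition). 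Hyperbolicity is obviously preserved by scalar extension, so the torsion property is preserved as well. Hence $u(F')\geq u(F)$, and taking the supremum over all $F$ gives $\su(K')\geq \su(K)$; combined with the second inequality, this yields the claimed equality.

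There is no serious conceptual obstacle; the proof is mostly a careful bookkeeping argument. The most delicate point is the descent in the second inequality, where one must exhibit a single intermediate field $L$ over which both the form $\varphi$ and a hyperbolicity witness for $m\times \varphi$ are defined, so that the anisotropic torsion property descends from $F'$ to a function field in one variable over $K$.
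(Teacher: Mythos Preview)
Your proof is correct and follows essentially the same approach as the paper's own proof: the same use of \Cref{P:u-rafufi} for the first inequality, the same descent to a finitely generated subextension for the second, and the same application of Springer's Theorem on odd-degree extensions for the equality. Your version is slightly more explicit in spelling out the direct-limit bookkeeping (the fields $F_L$ and $F_i$), but the underlying argument is identical.
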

\begin{proof}
    By \Cref{P:u-rafufi}, we have $u(K')\leq \su(K')$. 
    To show that $\su(K')\leq \su(K)$, consider
    a function field in one variable $F'/K'$ and an anisotropic torsion form $\varphi$ over $F'$.
    There exists a finitely generated subextension $F/K$ of $F'/K$ such that $\varphi$ is defined over $F$ and is a torsion form over $F$. Moreover, we may choose~$F$ to be transcendental over $K$.
    Then $F/K$ is a function field in one variable and~$\varphi$ is an anisotropic torsion form over $F$.
    Hence $\dim(\varphi)\leq u(F)\leq 2\su(K)$. 
    Since this holds for any anisotropic torsion form $\varphi$ over $F'$, it follows that $u(F')\leq 2\su(K)$.
    Having this for any function field in one variable $F'/K'$, we conclude that $\su(K')\leq \su(K)$.
    
     Assume now that $K'/K$ is a direct limit of finite normal extensions of odd degree.
    It remains to show that in this case $\su(K)\leq \su(K')$.
    Consider a function field in one variable $F/K$ and an anisotropic torsion form $\varphi$ over $F$.
    There exists a function field in one variable $F'/K'$ such that $F\subseteq F'$ and $F'=K'F$.
    Since $K'/K$ is a direct limit of finite normal extensions of odd degree, so is $F'/F$.
    Therefore, Springer's Theorem for odd degree extensions \cite[Chap.~VII, Theorem~2.7]{Lam05} yields that $\varphi$ remains anisotropic over $F'$. Since it also remains a torsion form over $F'$, we obtain that $\dim(\varphi)\leq u(F')\leq 2\su(K')$. Since this holds for all anisotropic torsion forms $\varphi$ over $F$, we see that $u(F)\leq 2\su(K')$.
    Having this for all function fields in one variable $F/K$, we conclude that  $\su(K)\leq \su(K')$.
\end{proof}

For a general algebraic extension $K'/K$, a strict inequality $\su(K')<\su(K)$ may occur.
For example, if $K'$ is an algebraic closure of $K$ while $K(\sqrt{-1})$ has a finite field extension of even degree, then \Cref{P:herquacl} yields that $\su(K')=1<\su(K)$.

\begin{qu}
    Does $\su(K')=\su(K)$ hold for every finite field extension $K'/K$?  
\end{qu}

In most cases where we have any knowledge on the $u$-invariant of function fields over $K$, we have in particular that $u(K(X,Y))=4u(K)$.

\begin{cor}\label{C:sharp-su-forallffiov}
    Assume that $\car(K)\neq 2$ and $u(K(X,Y))=4u(K)$. Then $\su(K)=u(K)$. 
\end{cor}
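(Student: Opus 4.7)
The plan is to establish the two inequalities $\su(K) \geq u(K)$ and $\su(K) \leq u(K)$ separately, each by a direct application of \Cref{P:u-rafufi}.

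For $\su(K) \geq u(K)$, I would apply \Cref{P:u-rafufi} taking the trivial finite extension $L = K$, which yields $u(K(X)) \geq 2u(K)$. Since $K(X)/K$ is itself a function field in one variable, the definition of $\su$ immediately gives $2\su(K) \geq u(K(X)) \geq 2u(K)$.

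For the reverse inequality, let $F/K$ be an arbitrary function field in one variable. I would pick $t \in F$ transcendental over $K$ so that $F/K(t)$ becomes a finite field extension, and then apply \Cref{P:u-rafufi} to the ground field $K(t)$ with an independent indeterminate $Y$. Plugging the finite extension $F/K(t)$ into the supremum on the right yields $u(K(t)(Y)) \geq 2u(F)$. Identifying $K(t)(Y)$ with the rational function field $K(X,Y)$ and invoking the hypothesis $u(K(X,Y)) = 4u(K)$, I obtain $u(F) \leq \frac{1}{2}u(K(X,Y)) = 2u(K)$. Taking the supremum over all such $F$, this gives $\su(K) \leq u(K)$, and the two inequalities combine to the desired equality.

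I do not anticipate any substantial obstacle; the argument is a formal consequence of the fact that any function field in one variable over $K$ appears as a finite extension of $K(t)$ for some transcendental $t$, together with two applications of \Cref{P:u-rafufi}. The hypothesis $u(K(X,Y)) = 4u(K)$ is precisely what is needed to turn the already-available lower bound $\su(K) \geq u(K)$ into an equality.
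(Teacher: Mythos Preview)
Your proof is correct, and in fact it is cleaner than the paper's own argument. Both proofs establish the inequality $u(F)\leq 2u(K)$ for an arbitrary function field in one variable $F/K$ by bounding $2u(F)$ above by $u(K(X,Y))$, but they reach this bound differently. The paper first reduces to the case $F=K(x,y)$: it writes $F/K(x,y)$ as a finite purely inseparable extension, realises a copy of $F$ inside $K(x,y)$ via Frobenius, and invokes Springer's theorem for odd-degree extensions to conclude $u(F)=u(K(x,y))$; only then does it exhibit $F$ as the residue field of a discrete valuation on $K(X,Y)$ and apply \Cref{P:BL}. Your route sidesteps this entire reduction by observing that $F$ is already a finite extension of $K(t)\cong K(X)$ and feeding this directly into \Cref{P:u-rafufi} with base field $K(t)$. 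This avoids both the purely inseparable bookkeeping and the explicit valuation, since \Cref{P:u-rafufi} packages the needed valuation-theoretic input. For the inequality $u(K)\leq\su(K)$, your argument via \Cref{P:u-rafufi} is exactly how the paper proves the relevant part of \Cref{P:odd-su}, so there is no difference there.
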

\begin{proof}
    By \Cref{P:odd-su}, we have $u(K)\leq \su(K)$.
To show the opposite inequality, consider an arbitrary function field in one variable $F/K$.
Then $F$ is isomorphic to a finite field extension of $K(X)$, so that \Cref{P:u-rafufi} and the hypothesis yield that $u(F) \leq \frac{1}{2}u(K(X, Y)) = 2u(K)$.
This shows that $\su(K) \leq u(K)$.
\end{proof}

The inequality $u(K(X))\geq 2 u(K)$ has an analogue for the strong $u$-invariant.

\begin{prop}
We have $\su(K(X))\geq 2\su(K)$.
\end{prop}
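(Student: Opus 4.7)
The plan is to use the rational function field construction combined with \Cref{P:u-rafufi} applied at the level of a function field in one variable $F/K$ rather than $K$ itself. The key observation is that, if $F/K$ is a function field in one variable, then $F(X)$ is simultaneously a rational function field over $F$ and a function field in one variable over $K(X)$: indeed, $\operatorname{trdeg}(F(X)/K)=2$ and $\operatorname{trdeg}(K(X)/K)=1$, so $\operatorname{trdeg}(F(X)/K(X))=1$, and $F(X)/K(X)$ is clearly finitely generated.

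Given this, I would proceed as follows. Fix an arbitrary function field in one variable $F/K$. Applying \Cref{P:u-rafufi} to the base field $F$ (and taking the finite extension $L=F$ in the supremum on the right-hand side), I obtain the inequality
$$u(F(X))\,\geq\, 2\,\sup\{u(L)\mid L/F\text{ finite field extension}\}\,\geq\, 2u(F)\,.$$
Since $F(X)/K(X)$ is a function field in one variable, this particular $F(X)$ contributes to the supremum defining $\su(K(X))$, giving
$$2\su(K(X)) \,\geq\, u(F(X)) \,\geq\, 2u(F)\,.$$

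Taking the supremum over all function fields in one variable $F/K$ on the right, the left-hand side is unchanged and the right-hand side becomes $2\cdot 2\su(K)=4\su(K)$, so $\su(K(X))\geq 2\su(K)$. The argument is uniform in whether $\su(K)$ is finite or infinite (in the latter case, one picks $F/K$ with $u(F)$ arbitrarily large).

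I do not anticipate a genuine obstacle here: the statement is essentially a formal consequence of \Cref{P:u-rafufi} plus the observation that $F(X)$ is a function field in one variable over $K(X)$. The only care needed is in checking the transcendence-degree bookkeeping and in correctly transporting the factor $\tfrac12$ through the definition of $\su$.
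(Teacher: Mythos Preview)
Your proposal is correct and follows essentially the same route as the paper: fix a function field in one variable $F/K$, observe that $F(X)/K(X)$ is again a function field in one variable, apply \Cref{P:u-rafufi} over the base field $F$ to get $u(F(X))\geq 2u(F)$, deduce $2\su(K(X))\geq u(F(X))\geq 2u(F)$, and take the supremum over $F$. Your added bookkeeping on transcendence degree and finite generation is correct and just makes explicit what the paper leaves implicit.
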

\begin{proof}
    Consider a function field in one variable $F/K$.
    Then $F(X)/K(X)$ is a function field in one variable and $u(F(X))\geq 2u(F)$, by \Cref{P:u-rafufi}. 
    Therefore we have $2\su(K(X))\geq u(F(X))\geq 2 u(F)$.
    Having this for every function field in one variable $F/K$ yields the claimed inequality.
\end{proof}

\begin{qu}
   Is $\su(K(X))=2\su(K)$?
\end{qu}

For the $u$-invariant, the corresponding question would have a negative answer.

\begin{ex}
    Let $K$ denote the quadratic closure of $\qq$.
    Choose an arbitrary  proper finite field extension $L/K$ (e.g.~$L=K(\sqrt[5]{6})$).
    Then $L$ is nonreal, and by \cite[Chap.~VII, Cor.~7.11]{Lam05} we have 
    $|\scg{L}|=\infty$. In particular $u(L)\geq 2$.
    It follows by \Cref{P:u-rafufi} that $u(K(X))\geq 2u(L)\geq 4>2=2u(K)$.
\end{ex}

One may  wonder whether $\su(K)$ is characterised in terms of the $u$-invariant by a single field extension of $K$. For example, is $u(F)\leq u(K(X))$ for every function field in one variable $F/K$? In other terms:

\begin{qu}
Is $\su(K)=\frac{1}2u(K(X))$?    
\end{qu}

%%%%%%%%%%%%%%%%%%%%%%%%%%%%%%%%%%%%%%%%%%%%%%%%%%%%%%%%%%%%%%%%%%%%%%%%%%%
\section{Function fields in one variable over a henselian valued field} %%%
%%%%%%%%%%%%%%%%%%%%%%%%%%%%%%%%%%%%%%%%%%%%%%%%%%%%%%%%%%%%%%%%%%%%%%%%%%%
\label{S:last}

We will now prove our main result and subsequently present  various situations where it can be applied.
We need a last step of preparation. 
For a valuation $w$ on a field $F$, we denote by $F^w$ the corresponding completion.
Let $K$ be a field with $\car(K)\neq 2$.

\begin{lem}\label{L:hens-fufi-ubound}
    Let $v$ be a henselian rank-$1$ valuation on $K$ and $\car(Kv)\neq 2$.
    Let~${F/K}$ be a function field in one variable and $w$ a rank-$1$ valuation on $F$ with $\mc{O}_v\subseteq\mc{O}_w$.
    Then $$u(F^w)= [wF:2wF]\cdot u(Fw)\leq [vK:2vK]\cdot 2\su(Kv)\,.$$
\end{lem}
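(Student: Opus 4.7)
The plan has two parts: first establish the equality $u(F^w) = [wF:2wF]\cdot u(Fw)$, then the inequality against $[vK:2vK]\cdot 2\su(Kv)$.

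For the equality, I would apply \Cref{P:u-hens} to the completion $F^w$. The extension of $w$ to $F^w$ is a complete rank-$1$ valuation, which is henselian by \cite[Theorem 1.3.1]{EP05}, with residue field $Fw$ and value group $wF$. The inclusion $\mc{O}_v\subseteq\mc{O}_w$ entails $\mfm_v\subseteq\mfm_w$ and hence an embedding $Kv\hookrightarrow Fw$; therefore $\car(Fw)=\car(Kv)\neq 2$. Consequently $1+\mfm\subseteq\sq{(F^w)}$ in the completion (using \cite[Cor.~4.2.4]{EP05}, as cited before \Cref{P:hensval-res}), and \Cref{P:u-hens} yields $u(F^w)=[wF:2wF]\cdot u(Fw)$.

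For the inequality, I would apply the trichotomy of \Cref{P:val-fufi-ext-cases} to $(F,w)/(K,v)$ and handle each case separately, in each case combining a bound on $u(Fw)$ with a computation of $[wF:2wF]$ via \Cref{P:torsion-quotient-torsionfree-groups} (applicable since value groups of valuations are torsion-free). In case $(1)$, where $Fw/Kv$ is finite and $wF/vK$ is finitely generated with $\rrk(wF/vK)=1$, \Cref{P:odd-su} gives $u(Fw)\leq\su(Kv)$ and \Cref{P:torsion-quotient-torsionfree-groups} gives $[wF:2wF]=2\,[vK:2vK]$. In case $(2)$, where $Fw/Kv$ is a function field in one variable and $wF/vK$ is finite, the definition of $\su$ gives $u(Fw)\leq 2\su(Kv)$ and \Cref{P:torsion-quotient-torsionfree-groups} gives $[wF:2wF]=[vK:2vK]$. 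In case $(3)$, where $Fw/Kv$ is algebraic and $wF/vK$ is torsion (so $\rrk(wF/vK)=0$), \Cref{P:odd-su} gives $u(Fw)\leq\su(Kv)$ and the inequality form of \Cref{P:torsion-quotient-torsionfree-groups} gives $[wF:2wF]\leq [vK:2vK]$. In each of the three cases, the product $[wF:2wF]\cdot u(Fw)$ is bounded by $[vK:2vK]\cdot 2\su(Kv)$.

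I do not foresee a real obstacle: the lemma is essentially an assembly of tools already developed earlier in the paper. The only point requiring any care is case $(3)$, where $wF/vK$ need not be finitely generated, so one must invoke the inequality (rather than the equality) form of \Cref{P:torsion-quotient-torsionfree-groups}; this is legitimate because $\rrk(wF/vK)=0$ in that case.
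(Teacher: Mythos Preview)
Your argument has a genuine gap: you apply \Cref{P:val-fufi-ext-cases} to the pair $(F,w)/(K,v)$, but that proposition requires $v=w|_K$, which you never verify. The hypothesis of the lemma is only that $\mc{O}_v\subseteq\mc{O}_w$, i.e.~$\mc{O}_v\subseteq\mc{O}_w\cap K=\mc{O}_{w|_K}$, so $w|_K$ is a coarsening of $v$. Since $v$ has rank~$1$, this means $w|_K$ is either equivalent to $v$ or trivial. The latter possibility does occur (any discrete valuation on $F$ trivial on $K$ satisfies $K\subseteq\mc{O}_w$, hence $\mc{O}_v\subseteq\mc{O}_w$), and your three cases do not cover it: when $w|_K$ is trivial, the trichotomy applied with $w|_K$ in place of $v$ yields information about $Fw/K$ and $wF/\{0\}$, not about $Fw/Kv$ and $wF/vK$, so your bounds in terms of $\su(Kv)$ and $[vK:2vK]$ do not follow. (A related slip: the implication ``$\mc{O}_v\subseteq\mc{O}_w$ entails $\mfm_v\subseteq\mfm_w$'' fails precisely when $w|_K$ is trivial; to get $\car(Fw)\neq 2$ one should instead argue that $2\in\mg{\mc{O}}_v\subseteq\mg{\mc{O}}_w$.)

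The paper's proof treats this missing case separately as case~$(4)$: when $w|_K$ is trivial, $w$ is discrete with $wF\simeq\zz$ and $Fw/K$ is a finite extension; one then extends $v$ (uniquely, by henselianity) to a valuation $v'$ on $Fw$, applies \Cref{P:u-hens} to $(Fw,v')$ to get $u(Fw)=[v'(Fw):2v'(Fw)]\cdot u((Fw)v')$, and bounds the two factors using \Cref{P:torsion-quotient-torsionfree-groups} and \Cref{P:odd-su}, respectively. Once you add this case and first check that, if $w|_K$ is nontrivial, then $w|_K$ is equivalent to $v$, the remainder of your argument (cases $(1)$--$(3)$) matches the paper.
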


\begin{proof}
    Since $2\in\mg{\mc{O}}_v\subseteq \mg{\mc{O}}_w$, we have that $\car(Fw)\neq 2$.
    Note that    
    $w$ extends naturally to a complete valuation $w'$ on $F^w$ with $w'F^w=wF$ and $F^ww'=Fw$. Hence $w'$ is a complete rank-$1$ valuation on $F$ and $\car(F^ww')\neq 2$. It follows by \cite[Theorem 1.3.1  (Hensel's Lemma)]{EP05} that $1+\mfm_{w'}\subseteq \sq{(F^w)}$.
    The equality on the left follows now from \Cref{P:u-hens} applied to $(F^w,w')$.
    It remains to show that $[wF : 2wF] \cdot u(Fw)\leq [vK:2vK]\cdot 2\su(Kv)$.

    The situation where $w|_K$ is trivial will be treated below as case $(4)$.
    We first consider the situation where $w|_K$ is nontrivial.
    Then $\mc{O}_v\subseteq \mc{O}_{w}\cap K\subsetneq K$.
    Since~$v$ has rank $1$, this implies that $w|_K$ is equivalent to $v$.
    We may then assume that $v=w|_K$.
    We follow the case distinction given in \Cref{P:val-fufi-ext-cases}. 

    $(1)$ Suppose that $Fw/Kv$ is a finite field extension and $wF/vK$ is a finitely generated $\zz$-module with $\rrk(Fw/Kv)=1$.
    It follows by \Cref{C:torsion-quotient-torsionfree-groups} that $[wF:2wF]=2\cdot [vK:2vK]$.
    By \Cref{P:odd-su} we have that $u(Fw)\leq \su(Kv)$. 
    From this the claimed inequality follows.

    $(2)$ Suppose that $Fw/Kv$ is a function field in one variable and $wF/wK$ is finite.
    Then by \Cref{P:torsion-quotient-torsionfree-groups}, $[wF:2wF]=[vK:2vK]$, and $u(Fw)\leq 2\su(Kv)$, which implies the claimed inequality.
    
    $(3)$ Suppose that $Fw/Kv$ is an algebraic field extension and $wF/vK$ is a torsion group.
    Then $u(Fw)\leq \su(Kv)$ and $[wF:2wF]\leq [vK:2vK]$ by \Cref{P:torsion-quotient-torsionfree-groups}.
    In this case we obtain even that $[wF:2wF]\cdot u(Fw)\leq [vK:2vK]\cdot \su(Kv)$.

    $(4)$ Assume finally that $w|_K$ is trivial.
    In this case $w$ is a discrete valuation, so we may assume that $wF=\zz$, and 
    the residue field $Fw$ is a finite extension of~$K$.
    Hence $v$ extends to a valuation $v'$ on $Fw$, and as $v$ is henselian, so is~$v'$.
    It follows by \Cref{P:u-hens} that $u(F^w) = [wF : 2wF] \cdot u(Fw) = 2u(Fw)$
    and $u(Fw)=[v'(Fw):2v'(Fw)]\cdot u((Fw)v')$.
    Since the extension $(Fw)v'/Kv$  is finite, we have that $u((Fw)v')\leq \su(Kv)$, by \Cref{P:odd-su}.
    Furthermore, $[v'(Fw):vK]\leq [Fw:K]<\infty$, whence $[v'(Fw):2v'(Fw)]=[vK:2vK]$ by \Cref{P:torsion-quotient-torsionfree-groups}.
    Thus
    $u(Fw)= [vK:2vK]\cdot u((Fw)v')\leq [vK:2vK]\cdot \su(Kv)$.
\end{proof}

We come to our generalisation of \cite[Theorem 3]{Schei09} and \cite[Cor.~6.4]{BGVG14}. 
The proof crucially depends on the local-global principle for isotropy from \cite[Cor.~3.9]{Meh19}, or more precisely, on the slightly extended version provided in \cite[Theorem 4.4]{BDGMZ}, where the valuation on the base field is assumed of rank $1$ and henselian, but not necessarily complete.

\begin{thm}\label{T:main-body}
    Let $v$ be a henselian valuation on $K$ with $\car(Kv)\neq 2$. Then $$\su(K)=[vK:2vK]\cdot \su(Kv).$$
    Furthermore, if $u(E) = 2\su(Kv)$ for every function field in one variable $E/Kv$, then $u(F) = 2[vK : 2vK] \cdot \su(Kv)$ for every function field in one variable $F/K$.
\end{thm}

\begin{proof}
By \Cref{P:val-su-inequality} we have that $\su(K) \geq [vK : 2vK] \cdot \su(Kv)$.
We now show the converse inequality.
    
    We first consider the case where $v$ has rank $1$.
    Consider an arbitrary function field in one variable $F/K$ and an anisotropic torsion form $\varphi$ over $F$. We claim that $\dim(\varphi)\leq [vK:2vK]\cdot 2\su(Kv)$.
    If $\dim (\varphi)\leq 2$, then this is trivial, because $\su(Kv)\geq 1$. 
    Suppose now that $\dim (\varphi)\geq 3$. 
    Since $\varphi$ is anisotropic over $F$ and $v$ is henselian of rank $1$, it follows by \cite[Theorem 4.4]{BDGMZ} that there exists a rank-$1$ valuation $w$ on $F$ with $\mc{O}_v\subseteq \mc{O}_w$ and such that $\varphi$ remains anisotropic over $F^w$.
    We obtain by \Cref{L:hens-fufi-ubound} that $\dim(\varphi)\leq [vK:2vK]\cdot 2\su(Kv)$, as claimed.
    Having this established for all function fields $F/K$ and all anisotropic torsion forms $\varphi$ over $F$, we conclude that $\su(K)=[vK:2vK]\cdot\su(Kv)$. 
    We have now established that $\su(K)=[vK:2vK]\cdot\su(Kv)$ whenever $v$ is of rank~$1$.

    In the next step, we assume that $v$ has arbitrary finite rank $n$ and show the statement in this situation.
    If $n=0$, then $v$ is trivial, whereby $Kv=K$, $[vK:2vK]=1$, and hence trivially $\su(K)=\su(Kv)=[vK:2vK]\cdot \su(Kv)$. The case where $n=1$ is already established, and we will use it in the induction step.
    Assume that $n>0$.
    Then the value group $vK$ contains a unique maximal proper convex subgroup $\Delta$.
    It follows that the quotient group $vK/\Delta$ is a torsion-free group of rank $1$, $\Delta$ has rank $n-1$, and the ordering on $vK$ induces an ordering on $\Gamma=vK/\Delta$. Mapping $x\in K$ to $v(x)+\Delta$ defines a valuation $v'$ on $K$ of rank $1$ with $v'K=\Gamma$. 
    The residue field $Kv'$ of $v'$ carries a valuation $\ovl{v}$ with value group $\Delta$ and hence of rank $n-1$. Furthermore, $\mc{O}_v\subseteq \mc{O}_{v'}$, and $\mfm_{v'}$ is an ideal of $\mc{O}_v$ such that $\mc{O}_v/\mfm_{v'}\simeq \mc{O}_{\ovl{v}}$. 
    Since $v$ is henselian, the valuations $v'$ on $K$ and $\ovl{v}$ and $Kv'$ are both henselian.
    By the induction hypothesis, we obtain that $\su(Kv')= [\ovl{v}(Kv'):2\ovl{v}K'(Kv')]\cdot \su((Kv')\ovl{v})$,
    and by the case where $n=1$, we have that $\su(K) = [v'K:2v'K] \cdot \su({Kv'})$.
    Furthermore, $(Kv')\ovl{v}=Kv$, $v'K=\Gamma$ and $\ovl{v}(Kv')=\Delta$.
    Therefore $\su(K)= [\Gamma:2\Gamma]\cdot \su(Kv')=[\Delta:2\Delta]\cdot [\Gamma:2\Gamma]\cdot \su(Kv)$.
    Finally, since $vK$ and $vK/\Delta$ are torsion-free, it readily follows that $[vK:2vK]=[\Gamma:2\Gamma]\cdot[\Delta:2\Delta]$. 
    We conclude that $\su(K)=[vK:2vK]\cdot \su(Kv)$, as desired.
    
    We now consider the general case without any assumption on the value group~$vK$.
    Let $\ff$ denote the prime field of $K$.
    Consider an arbitrary function field in one variable $F/K$ and an anisotropic torsion form $\varphi$ over $F$.
    We fix a positive integer $m$ such that $m\times \varphi$ is hyperbolic.
    We choose a finite subset $S$ of $F$ such that~$\varphi$ is defined over $K(S)$ and is a torsion form over $K(S)$, and further such that
    $F=K(S)$ and $\ff(S)/\ff(S\cap K)$ is of transcendence degree $1$.
    For any subfield $F_1$ of $F$ containing $S$, we have $u(F_1)\geq \dim(\varphi)$ because $\varphi$ is an anisotropic torsion form over $F_1$.
    
    We set $K_0=\ff(S\cap K)$. 
    Since the absolute transcendence degree of $K_0$ is bounded by $|S|$, it follows by \cite[Theorem~3.4.3]{EP05} that $\rk(v|_{K_0})\leq |S|+1<\infty$.
    By \Cref{L:hens-subfield-reduction}, there exists a subfield $K_1$ of $K$ containing $K_0$ such that $vK_1/vK_0$ is torsion, $vK/vK_1$ has no elements of order $2$, $Kv/K_1v$ is algebraic and purely inseparable and $v|_{K_1}$ is henselian.
    As the extension $Kv/K_1v$ is purely inseparable and  ${\car(Kv)\neq 2}$, it is in particular a direct limit of normal  extensions of odd degree.
    We conclude by \Cref{P:odd-su} that $\su(Kv)\leq \su(K_1v)$.
    As $vK/vK_1$ has no elements of order $2$, we have $[vK_1:2vK_1]\leq [vK:2vK]$.
    Since $vK_1/vK_0$ is torsion, we conclude that $\rk(v|_{K_1})=\rk(v|_{K_0})<\infty$.
    Therefore, in view of the previous part, we have $\su(K_1)=[vK_1:2vK_1]\cdot \su(K_1v)\leq [vK:2vK]\cdot \su(Kv)$.    
    Letting $F_1=K_1(S)$, we have that $F_1/K_1$ is a function field in one variable and obtain that
    $\dim(\varphi)\leq u(F_1)\leq 2\su(K_1)\leq [vK:2vK]\cdot 2\su(Kv)$.
    Having this for any anisotropic torsion form $\varphi$ defined over any function field in one variable $F/K$, we conclude that $\su(K)\leq [vK:2vK]\cdot \su(Kv)$.
    Hence, we have established the equality $\su(K)= [vK:2vK]\cdot \su(Kv)$ in all generality.

For the last part of the statement, assume now that, for every function field in one variable $E/Kv$ we have $u(E)=2\su(Kv)$, and hence also $u(E)=u(Kv(T))$.
Consider an arbitrary function field in one variable $F/K$.
If $K$ is real, then it follows by \Cref{P:hensval-res}~$(a)$ that $Kv$ is real, whereby $Kv(T)$ is real and $u(Kv(T))$ is even.
Therefore, \Cref{u-val-geq-ffiov} yields that there exists a function field in one variable $E/Kv$ such that $u(F) \geq [vK : 2vK] \cdot u(E)$.
Since $u(E)=2\su(Kv)$ and we already established that $u(F) \leq 2\su(K) = 2[vK : 2vK] \cdot \su(Kv)$, we conclude that $u(F) = 2[vK : 2vK] \cdot \su(Kv)$.
\end{proof}

\begin{rem}\label{R:compare-Mehmeti19}
Assume that $v$ is a complete rank-$1$ valuation on $K$ such that $\car(Kv)\neq 2$ and let $r=\rrk(vK)$.
In this setting, in \cite[Theorem 4.1]{Man24} (independently from this work) the bound $\su(K)\leq 2^r\su(Kv)$ is shown, which 
strengthens \cite[Cor.~6.2]{Meh19},
where the same bound is given for the special case where $vK\simeq \zz^r$, while otherwise achieving the weaker bound $\su(K)\leq 2^{r+1}\su(Kv)$.

The equality $u(K)=[vK:2vK]\cdot u(Kv)$ obtained through \Cref{T:main-body} is an improvement to the bound $\su(K)\leq 2^r\su(Kv)$.
Indeed, using \Cref{P:torsion-quotient-torsionfree-groups} with $G = vK$ and $H = 0$, we obtain that $[vK: 2vK]\leq 2^r$. Note also that this inequality can be strict (e.g.~when $vK=\rr$, we have $r=\infty$ and $[vK:2vK]=1$).
\end{rem}

We now recall three well-known constructions of valued fields $(K,v)$ with given residue field $Kv$ and a given value group $vK$, which provides us with examples where we can apply \Cref{T:main-body}.

\begin{ex}\label{Example}
Let $k$ be a field with $\car(k) \neq 2$ and let $K$ be the field of formal Puiseux series in one variable $t$ over $k$, that is, $K=\bigcup_{r\in\nat} k(\!(t^{1/r})\!)$. 
The $t$-adic valuation on $k(\!(t)\!)$ extends uniquely to a valuation $v$ on $K$, and this extension has residue field $Kv=k$ and value group $vK=\qq$. In particular, $\rrk(vK)=1$.
Note that~$v$ is henselian, but not complete.
As $\qq$ is divisible, we have that $[vK:2vK]=1$. Hence
we obtain by \Cref{T:main-body} that $\widehat{u}(K) = \widehat{u}(k)$.
Denoting by $(K',v')$ the completion of $(K,v)$, we obtain similarly that
$\widehat{u}(K') = \widehat{u}(k)$.
\end{ex}

\begin{ex}\label{Example2}
    Let $k$ be a field with $\car(k)\neq 2$ and $\Gamma$ an ordered abelian group.
    Let $K$ denote the field of Hahn series with respect to $\Gamma$ and with coefficients in~$k$.
    (The elements of $K$ are series $\sum_{\gamma\in S} a_\gamma t^\gamma$ with well-ordered support $S\subseteq\Gamma$.)
    Then $K$ carries a natural valuation $v$ with value group $\Gamma$ and residue field $k$, and $(K,v)$ is henselian, by \cite[Lemma 2.40]{Mar18}. 
    By \Cref{T:main-body}, we obtain that $\su(K)=[\Gamma: 2\Gamma]\cdot \su(k)$. 
    \end{ex}

\begin{ex}\label{Example1}
Consider a valued field $(k,v)$ with $\car(kv)\neq 2$ and value group~$vk$ contained in $\rr$ as an ordered group.
In particular $\rk(v)\leq 1$.
Let $G$ denote the divisible closure of $vk$, which is still a subgroup of $\rr$, and in fact a $\qq$-linear subspace of $\rr$. We consider the quotient space $\rr/G$.
Let $n\in\nat$ and choose $\gamma_1,\dots,\gamma_n\in\rr$  such that $\ovl{\gamma}_1,\dots,\ovl{\gamma}_n$ are $\qq$-linearly independent in $\rr/G$.
Let $K=k(X_1,\dots,X_n)$.
By \cite[Theorem 2.2.1]{EP05}, $v$ extends to a valuation $w$ on $K$ such that $w(X_i)=\gamma_i$ for $1\leq i\leq n$ and with $Kw=kv$ and $wK\simeq vk\times\zz^n$. 
In particular, we have $[wK:2wK]=2^n\cdot [vk:2vk]$.

Let $(K',w')$ be either the henselization or the completion of $(K,w)$.
Then in particular $(K',w')$ is a henselian valued field with $w'K'=wK\simeq vk\times \zz^n$ and $K'w'=Kw=kv$.
By \Cref{T:main-body}, one obtains that $\su(K')=2^n [vk: 2vk] \cdot\su(kv)$.
\end{ex}

We now recall how examples of valuations to which \Cref{T:main-body} is applicable are obtained naturally from a classical geometric setting.

\begin{ex} 
Let $(k,v)$ be a discretely valued field, and $\mathcal{O}_v$ its valuation ring.  
Let us take a regular arithmetic surface $\mathcal{X}$ over~$\mathcal{O}_v$, i.e. a regular irreducible flat model of a regular irreducible curve $X$ over~$k$.  Let $D$ be an irreducible divisor in~$\mathcal{X}$, and $\eta_D$ its generic point. Then $\mathcal{O}_{\mathcal{X}, \eta_D}$ is a 
regular local ring of dimension~$1$, hence a discrete valuation ring. It induces a discrete valuation $v_1$ on the function field $K$ of $X$, with residue field $\kappa(D)$-the function field of $D$. One can then take a closed point $x \in D$, and similarly, it induces a discrete valuation $v_2$ on~$\kappa(D)$ of residue field~$\kappa(x)$. By composing the valuations $v_1$ and $v_2$, we obtain a rank-$2$ valuation $w$ on $K$ with value group $\zz\times\zz$ and residue field $\kappa(x)$. Let $F$ denote its henselization. 
Then $\widehat{u}(F)=4\widehat{u}(\kappa(x))$ and~$\kappa(x)/kv$ is a finite  extension of~$kv$. 
This can for instance be applied to models of $\mathbb{P}_{\mathbb{Q}_p}^{1}$ over $\mathbb{Z}_p$ for any prime number~${p \neq 2}.$
\end{ex}

\begin{ex}
Let $k$ be a field in which $-1$ is not a square. 
Let us fix coordinate functions~${X, Y}$ on $\mathbb{P}_{k}^{2}$. 
Let $D=\pi^{-1}((X^2+1))$, the divisor of $\mathbb{P}_{k}^2$ induced by the fiber over $(X^2+1)$ of the projection $\pi: \mathbb{P}_{k}^2 \rightarrow \mathbb{P}_{k}^1$, $(X,Y)\mapsto X$. 
We note that~$D$ can be identified to $\mathbb{P}_{k(i)}^1$, where $i=\sqrt{-1}$ and $k(i)$ is the residue field of $(X^2+1)$ in~$\mathbb{P}_{k}^1$.  
Its generic point $\eta_{D}$ corresponds to the ideal $(X^2+1)$ in $k[X,Y],$ so the local ring~$\mathcal{O}_{\mathbb{P}_{k}^2, \eta_D}$ is $k[X, Y]_{(X^2+1)}$, and it is a discrete valuation ring. 
Let $v_1$ be the discrete valuation it induces on~$k(X,Y)$. The corresponding residue field is~$k(i)(Y)$.

Now let us take a closed point $y$ of $D$. It is a divisor of $D$, and a codimension-$2$ point of $\mathbb{P}_{k}^2$. 
Let us choose e.g. $y=(Y^2+i)$. The local ring $\mathcal{O}_{\mathbb{P}_{k}^1, y}=k(i)[Y]_{(Y^2+i)}$ is a discrete valuation ring. Let $v_2$ denote the corresponding valuation  on $k(i)(Y)$, the fraction field of $k(i)[Y]_{(Y^2+i)}$. 
Note that its residue field is~$k(\sqrt{-i})$. 
By composing $v_1$ and $v_2$ we obtain a rank-$2$ valuation $w$ on $k(X,Y)$ with value group $\zz\times\zz$ and residue field $k(\sqrt{-i})$. Let $F$ denote the henselization of $k(X,Y)$ with respect to~$w$. Then, $\su(F)=4\su(k(\sqrt{-i})).$

This same procedure can be successively applied to higher-dimensional normal varieties which have successive normal fibrations.
\end{ex}

In the following cases, \Cref{exact-uinv-ffiov} together with \Cref{T:main-body} provide directly the exact value of $u(F)$ for every function field in one variable $F/K$.

\begin{ex}
    Let $n\in\nat$, $v$ a henselian valuation on $K$ with $[vK:2vK]=2^n$ and let $F/K$ be a function field in one variable.
    \begin{enumerate}[$(a)$]
        \item If $Kv$ is algebraically closed with $\car(Kv)\neq 2$, then $u(F)=2^{n+1}$.
        \item If $Kv$ is real closed, then $u(F)=2^{n+1}$.
        \item If $Kv$ is a finite field with odd cardinality, then $u(F)=2^{n+2}$.
        \item If $Kv=\qq_p$ for a prime number $p$, then $u(F)=2^{n+3}$.
    \end{enumerate}
\end{ex}

\section{Dyadic discrete valuations with perfect residue field}
\label{S:dyadic}

In this final section, we  take a look at the dyadic case and provide an alternative proof of \cite[Theorem 4]{PS14} which is based on the local-global principle from \cite{Meh19}.

Let $K$ be a field.
Let $n\in\nat$ and let $\varphi$ be an $n$-dimensional quadratic form over $K$.
Recall from \Cref{S:DT} that there exists a unique $n\times n$-matrix $A$ over~$K$ such that 
 $\mf{b}_\varphi(x,y)=\varphi(x+y)-\varphi(x)-\varphi(y)= xAy^\tr$ for all $x,y\in K^n$, and~$\varphi$ is called \emph{nonsingular} if $\det(A)\in\mg{K}$.
 Consider now $m\in\nat$ with $m\leq n$ and an $m$-dimensional quadratic form $\psi$ over $K$.
We call $\psi$ a \emph{subform of $\varphi$} if $\psi(X)=\varphi(X\cdot C)$ for $X = (X_1, \ldots, X_m)$ and some $m\times n$-matrix $C$ of rank $m$ over $K$, or equivalently, if there exist $K$-linearly independent vectors $e_1,\dots,e_m\in K^n$ such that $\psi(c_1,\dots,c_m)=\varphi(c_1e_1+\ldots+c_me_m)$ for all $c_1,\dots,c_m\in K$.
If $\psi$ is nonsingular, then by \cite[Prop.~7.22]{EKM08}, $\psi$ is a subform of $\varphi$ if and only if $\varphi\simeq \psi\perp\vartheta$ for some quadratic form $\vartheta$.

A $2$-dimensional quadratic form will be called a \emph{binary form}.
Given a valuation~$v$ on $K$, a binary  % quadratic 
form $\beta$ over $K$ will be called $v$-\emph{tame}  if $\beta\simeq c(X_1^2+X_1X_2+dX_2^2)$ for some $c\in\mg{K}$ and $d\in\mc{O}_v$ with $1-4d\in\mg{\mc{O}}_v$.
Note that any $v$-tame binary form is nonsingular.

\begin{prop}\label{P:bintamesubformcrit}
    Let $\varphi$ be a  quadratic form over $K$ and $n=\dim(\varphi)$.
    Let $v$ be a valuation on $K$.
    The following two conditions are equivalent:
\begin{enumerate}[$(i)$]
         % \item\label{it:quadr-unr} There exists a field extension $L/K$ with $[L:K]\leq 2$ in which $v$ is unramified and such that $\varphi_{L}$ is isotropic.
        \item\label{it:tame-binary} 
        $\varphi$ contains a $v$-tame binary subform.
        \item
        There exist $x,y\in K^n$ and $d\in\mc{O}_v$ with $1-4d\in\mg{\mc{O}}_v$ such that $\mf{b}_\varphi(x,y)=1$ and $\varphi(x)\varphi(y)=d$.
\end{enumerate}
If $\car(Kv)=2$, then $(i)$ and $(ii)$ are further equivalent to the following:
\begin{enumerate}[$(i)$]
\setcounter{enumi}{2}
\item\label{it:Schwarz} 
There exist $x,y\in K^n$  with $v(\mf{b}_\varphi(x,y))\leq \min\{v(\varphi(x)),v(\varphi(y))\}<\infty$.
\end{enumerate}
\end{prop}
\begin{proof}
 $(i \Rightarrow ii,iii)$
Assume that $\varphi$ contains a $v$-tame binary subform.
This subform is isometric to $c(X_1^2+X_1X_2+dX_2^2)$ for some $c\in\mg{K}$ and $d\in\mc{O}_v$  with $1-4d\in\mg{\mc{O}}_v$.
Hence, there exist $x,y\in K^n$ such that $\varphi(x)=c$, $\varphi(y)=cd$ and $\mf{b}_\varphi(x,y)=c$.
Then $\mf{b}_\varphi(x,c^{-1}y)=1$ and 
$\varphi(x)\varphi(c^{-1}y)=c(c^{-2}cd)=d$.
Furthermore, 
$v(\mf{b}_\varphi(x,y))=v(c)=\min\{v(\varphi(x)),v(\varphi(y))\}<\infty$.

$(ii \Rightarrow i)$
Let $d\in\mc{O}_v$ and $x,y\in K^n$ be such that $1-4d\in\mg{\mc{O}}_v$, $\mf{b}_\varphi(x,y)=1$ and $d=\varphi(x)\varphi(y)$.
For $\lambda\in K$, we have
$\mf{b}_\varphi(x,\lambda y)^2=4\varphi(x)\varphi(\lambda y)$ and hence $\lambda x\neq y$.
Thus $x$ and $y$ are $K$-linearly independent.
If $\varphi(x)=\varphi(y)=0$, then we replace $x$ by $x+y$, and 
if $\varphi(y)\neq 0= \varphi(x)$, then we switch the roles of $x$ and $y$.
Hence, we may assume that $\varphi(x)\neq 0$.
We set $c=\varphi(x)$ and $y'=cy$.
Then $x$ and $y'$ are $K$-linearly independent, $\varphi(y')=cd$ and $\varphi(x_1x+x_2y')=x_1^2c+cx_1x_2+cdx_2^2$ for any $x_1,x_2\in K$.
Therefore $c(X_1^2+X_1X_2+dX_2^2)$ is a subform of $\varphi$.

$(iii\Rightarrow ii)$ 
Here we need to assume that $\car(Kv)=2$.
Let $x,y\in K^n$ be such that $v(\mf{b}_\varphi(x,y))\leq \min\{v(\varphi(x),v(\varphi(y))\}<\infty$.
Without loss of generality, $v(\varphi(x))\leq v(\varphi(y))$ and $\varphi(x)\neq 0$.
Set $y'=\mf{b}_\varphi(x,y)^{-1}y$ and $d=\varphi(x)\varphi(y')$. Then $\mf{b}_\varphi(x,y')=1$ and $v(d)=v(\varphi(x))+v(\varphi(y))-2v(\mf{b}_\varphi(x,y))\geq 0$, whereby $d\in\mc{O}_v$.
Since $\car(Kv)=2$, it follows that $1-4d\in\mg{\mc{O}}_v$.
\end{proof}

\begin{ex}\label{isotame}
    In the setting of \Cref{P:bintamesubformcrit}, assume that $\varphi$ is nonsingular and isotropic. Hence there exist $x,y\in K^n$ such that $\varphi(x)=0$ and $\mf{b}_\varphi(x,y)=1$. Then $\varphi(x)\varphi(y)=0\in\mc{O}_v$, so \Cref{P:bintamesubformcrit} yields that $\varphi$ contains a $v$-tame binary form.
    More precisely, in this case $\varphi$ contains the hyperbolic form $\hh=X_1X_2$ as a subform, which is isometric to $X_1(X_1+X_2)$ and hence $v$-tame.
\end{ex}

A valuation $v$ on $K$ is called \emph{$2$-henselian} if every quadratic polynomial in $\mc{O}_v[X]$ having a simple root in $Kv$ also has a root in $K$.

\begin{rem}\label{R:2hens}
    If $\car(Kv)\neq 2$, then $v$ is $2$-henselian if and only if $1+\mfm_v\subseteq \sq{\mc{O}}_v$, by \cite[Cor.~4.2.4]{EP05}.
\end{rem}

The following statement clarifies the role of tame binary subforms in determining isotropy over a $2$-henselian valued field.

\begin{lem}\label{L:2hens-tame-rep-lift}
    Let $v$ be a $2$-henselian valuation on $K$.
    Let $c,d\in\mc{O}_v$ be such that $1-4d\in\mg{\mc{O}}_v$.
    If $\ovl{c}$ is nontrivially represented by $X_1^2+X_1X_2+\ovl{d}X_2^2$ over $Kv$, then~$c$ is represented nontrivially by $X_1^2+X_1X_2+dX_2^2$ over $K$.
    In particular, if $X_1^2+X_1X_2+\ovl{d}X_2^2$ is isotropic over $Kv$, then $X_1^2+X_1X_2+dX_2^2$ is isotropic over~$K$.
    \end{lem}
\begin{proof}
    Assume that $\ovl{c}$ is nontrivially represented by $X_1^2+X_1X_2+\ovl{d}X_2^2$ over $Kv$.
    Hence there exist elements $x_1,x_2\in \mc{O}_v$ which are not both contained in $\mfm_v$ such that $x_1^2+x_1x_2+dx_2^2-c\in\mfm_v$.
    If $x_2+2x_1\in\mg{\mc{O}}_v$, then $\ovl{x}_1\in Kv$ is a simple root of the polynomial $T^2+Tx_2+dx_2^2-c\in\mc{O}_v[T]$, and since $v$ is $2$-henselian, there exists $x_1'\in K$ with $x_1'^2+x_1'x_2+dx_2^2-c=0$.
    If $x_2+2x_1\in\mfm_v$, then $x_1\in\mg{\mc{O}}_v$, and as $1-4d\in\mg{\mc{O}}_v$, it follows that %$x_1+2dx_2=(1-4d)x_1+2d(x_2+2x_1)\in\mg{\mc{O}}_v$
    $x_1+2dx_2\equiv (1-4d)x_1\not\equiv 0\bmod \mfm_v$, whereby the residue $\ovl{x}_2\in Kv$ is a simple root of the polynomial $x_1^2+x_1T+dT^2-c\in\mc{O}_v[T]$. As $v$ is $2$-henselian, we conclude that there exists $x_2'\in K$ with ${x_1^2+x_1x_2'+d{x_2'}^2-c=0}$.  
\end{proof}

Whether a given quadratic form possesses a tame binary subform is invariant under certain extensions of valued fields. In this context, we state two crucial reductions by certain extensions of the base field.

Following \cite{ET11}, given a valuation $v$ on $K$, a field extension $K'/K$ will be called \emph{$v$-inertial} if it is algebraic and  $v$ extends to a valuation $v'$ on $K'$ in such way that $Lv'/Kv$ is separable with $[Lv':Kv]=[L:K]$ for every finite subextension $L/K$ of $K'/K$. 

\begin{thm}[Elomary-Tignol]\label{P:ET}
    Let $v$ be a henselian valuation on $K$ with $\car(Kv)=2$ and $\varphi$ a nonsingular quadratic form over $K$.
    Then $\varphi$ contains a $v$-tame binary subform if and only if there exists a $v$-inertial extension $L/K$ such that $\varphi_{L}$ is isotropic.
\end{thm}
\begin{proof}
    If $\varphi$ is isotropic, then the equivalence holds trivially, in view of \Cref{isotame}.
    Assume that $\varphi$ is anisotropic.
    Let $n=\dim(\varphi)$.
    It follows by \Cref{P:bintamesubformcrit} that
    $\varphi$ possesses no $v$-tame binary subform if and only if we have $v(\mf{b}_\varphi(x,y))>\min\{v(\varphi(x)),v(\varphi(y))\}$ for all nonzero vectors $x,y\in K^n$, which is condition $(S')$ in \cite[Sect.~4]{ET11}. 
    Therefore the statement follows from \cite[Theorem~16]{ET11} in this case.
\end{proof}

\begin{lem}\label{L:S-density}
    Let $(K,v)$ be a valued field.
    Let $k$ be a subfield of $K$ which is dense in $K$ with respect to the $v$-adic topology. 
    Let $\varphi$ be a quadratic form over $k$. 
    Then $\varphi$ contains a $v$-tame binary subform over $K$ if and only if $\varphi$ contains a $v$-tame binary subform over $k$.
\end{lem}
\begin{proof}
Note that 
$\mc{O}_v^\#  =  \{d\in \mc{O}_v\mid 1-4d\in\mg{\mc{O}}_v\}$ is open in $K$. 
We endow $K^n\times K^n$ with the $v$-adic topology. 
Then
 $U = \{(x,y)\in K^n\times K^n\mid \mf{b}_\varphi(x,y)\neq 0\}$ is open in $K^n\times K^n$.
The map 
    $\Phi:U\to K,(x,y)\mapsto \varphi(x)\varphi(y)\mf{b}_\varphi(x,y)^{-2}$
    is continuous. 
    We set $S  =  \{\varphi(x)\varphi(y)\mid x,y\in K^n, \mf{b}_\varphi(x,y)=1\}$.
    For $(x,y)\in U$ and $\lambda=\mf{b}_\varphi(x,y)^{-1}$, we have $\mf{b}_\varphi(\lambda x,y)=1$, whereby $\Phi(x,y)=\Phi(\lambda x,y)\in S$.
    Therefore $S=\Phi(U)$.
    The hypothesis that $k$ is dense in $K$ implies that $k^n\times k^n$ is dense in $K^n\times K^n$. It follows that $(k^n\times k^n)\cap \Phi^{-1}(\mc{O}_v^\#)$ is dense in $\Phi^{-1}(\mc{O}_v^\#)$.
    By \Cref{P:bintamesubformcrit}, this yields the claimed statement.
\end{proof}

\Cref{L:S-density} can typically be applied for the completion of a valued field.
Recall that we denote the completion of a field $F$ with respect to a valuation $w$ by $F^w$.

\begin{thm}\label{inertanisobound}
Let $(K,v)$ be a discretely valued field such that $\car(K)=0$, $\car(Kv)=2$ and $Kv$ is perfect.
Let $n\in\nat$ and let $F/K$ be a field extension of transcendence degree $n$ and $\varphi$ a quadratic form over $F$ with $\dim(\varphi)>2^{n+1}$.
Let $w$ be a valuation on $F$ extending $v$ and assume that $w$ is henselian or of rank $1$.
Then $\varphi$ contains a $w$-tame binary subform over $F$.
    \end{thm}
    \begin{proof}       
       Assume first that $w$ is henselian.
        Let $(F', w')$ be a maximal inertial algebraic extension of $(F, w)$. (It is unique up to isomorphism of valued fields, and called
        the \emph{inertia field of $(F,w)$}; see e.g.~\cite[Section 5.2]{EP05}.)
        Since $w$ is henselian, so is $w'$, and it follows that $F'w'$ is separably closed.
        We choose a maximal algebraic extension $K'/K$ inside $F'/K$ with $w'K'=vK=\zz$.
        Set $v'=w'|_{K'}$.
        Since $(F',w')$ is henselian and $F'w'$ is separably closed, it follows that $K'v'$ is separably closed.
        Since $Kv$ is perfect, we obtain that $K'v'$ is algebraically closed.
        Hence $(K',v')$ is a henselian discretely valued field with algebraically closed residue field.
        Since $F'/K'$ is a field extension of transcendence degree $n$,  it follows by \Cref{T:Lang} that $u(F')\leq 2^{n+1}$.
        Hence $\varphi$ is isotropic over $F'$.
        Since $F'/F$ is a $w$-inertial extension, we conclude by \Cref{P:ET} that $\varphi$ contains a $w$-tame binary form.
        This proves the statement under the assumption that $w$ is henselian.

Let us now consider the case where $\rk (w)=1$ while $w$ is not assumed to be henselian.
Let $\hat{w}$ denote the natural valuation extension of $w$ to $F^w$.
Since $\rk(w)=1$, the valuation $\hat{w}$ is henselian.
Let $F'$ denote the relative algebraic closure of $F$ inside $F^w$ and $w'=\hat{w}|_{F'}$.
Then $(F',w')$ is henselian, too, and $F'/K$ is of transcendence degree $n$ over $K$. 
Hence it follows from the case which is already established that $\varphi$ contains a $w'$-tame binary form over $F'$. 
Since $F$ is dense in~$F^w$ with respect to the $\hat{w}$-adic topology, and hence as well in $F'$, we conclude by
 \Cref{L:S-density} that $\varphi$ contains a $w$-tame binary form over $F$.
\end{proof}

\begin{rem}
We do not know whether the conclusion of \Cref{inertanisobound} remains valid without assuming $w$ to be henselian or of rank $1$.
\end{rem}

\begin{prop}\label{P:inerquadnormsurj}
    Let $v$ be a $2$-henselian valuation on $K$ with $\car(Kv)=2$ and such that $Kv$ is perfect.
    Let $\beta$ be an anisotropic $v$-tame binary % quadratic 
    form over $F$.
    Then the nonzero elements represented by $\beta$ over~$K$ form a coset of $\mg{K}/\sq{K}\mg{\mc{O}}_v$.
\end{prop}
\begin{proof}
By the hypothesis, $\beta\simeq c(X_1^2+X_1X_2+aX_2^2)$ for some  $a\in\mc{O}_v$ and ${c\in\mg{K}}$.
Since $c$ is represented by $\beta$, we need to show that the nonzero elements represented by $c(X_1^2+X_1X_2+aX_2^2)$ over $K$ are precisely the elements of $c\sq{K}\mg{\mc{O}}_v$.
For this, we may assume without loss of generality that $c=1$.
Since $v$ is  $2$-henselian and~$\beta$ is anisotropic,
by \Cref{L:2hens-tame-rep-lift}, the quadratic form $X_1^2+X_1X_2+\ovl{a}X_2^2$ over $Kv$ is anisotropic, too.
This implies that, for any $x_1,x_2\in K$, we have 
$v(x_1^2+x_1x_2+ax_2^2)=2\min\{v(x_1),v(x_2)\}$ and hence $x_1^2+x_1x_2+ax_2^2\in\{0\}\cup\sq{K}\cdot \mg{\mc{O}}_v$.
On the other hand, since $Kv$ is perfect and $\car(Kv)=2$, the quadratic form $X_1^2+X_1X_2+\ovl{a}X_2^2$ over $Kv$ represents all elements of $Kv$.
% Consider $c\in\mg{\mc{O}}_v$.
%Since the residue $\ovl{c}$ is represented by $X^2+XY+\ovl{a}Y^2$ over $Kv$, there exist $x,y\in\mg{\mc{O}}_v$ such that $x^2+xy+ay^2\in c+\mfm_v$.
Since $v$ is $2$-henselian, we conclude by \Cref{L:2hens-tame-rep-lift} that 
$X_1^2+X_1X_1+aX_2^2$ over $K$ represents all elements of $\mg{\mc{O}}_v$,  hence also all elements of~$\sq{K} \mg{\mc{O}}_v$.
\end{proof} 

\begin{lem}\label{P:PS-preparation}
Let $v$ be a $2$-henselian valuation on $K$ such that $Kv$ is perfect, $\car(Kv)=2$ and $[vK:2vK]<\infty$. 
Let $\varphi$ be an anisotropic quadratic form over~$K$. 
Let $r\in\nat$ and let $\beta_1,\dots,\beta_r$ be $v$-tame binary % quadratic 
forms over $K$ and $\psi$ a quadratic form over $K$ containing no $v$-tame binary subform and such that 
\begin{equation}
\tag{$\ast$}
\varphi\simeq \beta_1\perp\ldots\perp\beta_r\perp\psi.
\end{equation}
Set $m=[vK:2vK]$.
Then  $r\leq m$ and the nonzero elements represented by $\psi$ over $K$
    are contained in a union of $m-r$ cosets of $\mg{K}/\mg{\mc{O}}_v\sq{K}$.
\end{lem}
\begin{proof}
We denote by $B_i$ be the set of elements of $\mg{K}$ represented by $\beta_i$, for $1\leq i\leq r$.
It follows by \Cref{P:inerquadnormsurj}  that
$B_1,\dots,B_r$ are cosets of $\mg{K}/\mg{{\mc{O}}}_v\sq{K}$.
Note that $\mg{K}/\mg{\mc{O}}_v\sq{K}\simeq vK/2vK$, whereby $m=|\mg{K}/\mg{\mc{O}}_v\sq{K}|$.
If $B_i=B_j$ for some distinct $i,j\in\{1,\dots,r\}$, 
then $\beta_i\perp\beta_j$ is isotropic.
If $\psi$ represents an element of $B_i$ for some $i\in\{1,\dots,n\}$, then $\beta_i\perp\psi$ is isotropic.
Hence in view of $(\ast)$, in either of these two situations, $\varphi$ is isotropic.
In the remaining case, the cosets $B_1,\dots,B_r$ are distinct, and 
the elements of $\mg{K}$ represented by $\psi$ are contained in $\mg{K}\setminus\bigcup_{i=1}^rB_i$, which is a union of $m-r$ cosets of $\mg{K}/\mg{\mc{O}}_v\sq{K}$.
\end{proof}

Recall that, by \cite[Prop.~7.22]{EKM08}, given a nonsingular subform $\beta$ of a quadratic form $\varphi$, we obtain that $\varphi\simeq \beta\perp \varphi'$ for another quadratic form $\varphi'$.
Therefore, given a valuation $v$ on $K$ and a quadratic form $\varphi$ over $K$, a decomposition of $\varphi\simeq \beta_1\perp\ldots\perp\beta_r\perp\psi$ as in \Cref{P:PS-preparation} can always be found, without any  assumptions on~$v$, using that $v$-tame binary forms are  nonsingular.

\begin{thm}[Parimala-Suresh]\label{PS}
        Let $(K,v)$ be a henselian discretely valued field with $\car(K)=0$, $\car(Kv)=2$ and $Kv$ perfect.
        Then $\su(K)=2\su(Kv)$  with $$\su(Kv)= \begin{cases}
         1 &\text{if } Kv \text{ has no finite extension of even degree, }\\ 2 &\text{otherwise.}\end{cases}$$ 
Furthermore, $u(F)=4\su(Kv)$ for every function field in one variable $F/K$.
\end{thm}
\begin{proof}
Consider first the case where $Kv$ has no finite extension of even degree. Then $\su(Kv) = 1$, by \Cref{P:herquacl}, using that $-1=1\in\sq{(Kv)}$.
It further follows that there exists an inertial extension of valued fields $(K', v')/(K,v)$ which is a direct limit of odd degree normal finite extensions and such that $K'v'$ is algebraically closed.
Since $(K',v')$ is henselian as well, it follows by \Cref{T:Lang} that $\su(K') = 2$. We conclude by \Cref{P:odd-su} that $\su(K) = 2$.
By \Cref{P:herquacl}, this implies that $u(F) = 4$ for every function field in one variable $F/K$.

We now assume that there exists a finite field extension of even degree  $\ell/Kv$.
By \Cref{P:herquacl} we have that $u(E) \geq 4$ for every function field in one variable $E/Kv$, and by \Cref{u-val-geq-ffiov} this yields that $u(F) \geq 8$ for every function field in one variable $F/K$.
It therefore remains only to show that $\su(K) \leq 4$.

Consider a function field in one variable $F/K$ and an anisotropic quadratic form $\varphi$ over $F$.
We need to show that $\dim(\varphi)\leq 8$.

By \cite[Theorem 4.4]{BDGMZ}, there exists a rank-$1$ valuation $w$ on $F$ with $\mc{O}_v\subseteq \mc{O}_w$ and such that $\varphi$ remains anisotropic over $F^w$.
In particular $\dim(\varphi)\leq u(F^w)$.

Consider first the case where $K\subseteq \mc{O}_w$. 
Then $\car(Fw)=\car(K)=0$, and since $F/K$ is a function field in one variable, $w$ is discrete.
It follows that $w$ extends naturally to a discrete valuation on $F^w$ with residue field $Fw$.
Therefore $u(F^w)=2u(Fw)$, in view of \Cref{P:u-hens}.
Since $Fw/K$ is a finite field extension,
$v$ extends to a discrete valuation $v'$ on $Fw$. Since $v$ is henselian and $Kv$ is perfect, we obtain that $v'$ is henselian and $(Fw)v'$ is perfect. 
Since $\car((Fw)v')=\car(Kv)=2$, we obtain by
\cite[Cor. 1]{MMW91} that $u((Fw)v') \leq 2$.
By \Cref{T:MMW} it follows that $u(Fw)=2u((Fw)v') \leq 4$. 
Therefore, $\dim(\varphi)\leq u(F^w)\leq 8$.

Hence we are left with the case where $K$ is not contained in $\mc{O}_w$. Since the valuation $v$ on $K$ is discrete, $\mc{O}_v$ is a maximal proper subring of $K$, so we obtain that
$K\cap\mc{O}_w=\mc{O}_v$.
Hence, after rescaling values, $w$ is an extension of $v$ from~$K$ to $F$.
Then $Fw/Kv$ is either an algebraic extension or a function field in one variable.
Since $Kv$ is perfect  and $Fw/Kv$ has transcendence degree at most $1$, we have $[Fw : (Fw)^{(2)}] \leq 2$, by \cite[Satz 15]{Tei36}, where $(Fw)^{(2)}$ is the subfield of~$Fw$ of squares.
It follows by \cite[Cor.~1]{MMW91} that $u(Fw)\leq 2[Fw : (Fw)^{(2)}] \leq 4$.
If $w$ is discrete,
 then we obtain using \Cref{T:MMW} that $u(F^w)=2u(Fw)\leq 8$, whereby
 $\dim(\varphi)\leq u(F^w)\leq 8$.

It remains to consider the case where $w$ is not discrete.
In this case, it follows by 
\Cref{P:val-fufi-ext-cases} that $Fw/Kv$ is an algebraic extension and the group $wF/vK$ is either torsion or finitely generated with $\rrk(wF/vK)=1$.
In particular, $Fw$ is perfect and, since $vK\simeq \zz$, we obtain that $[wF:2wF]\leq 4$.
We write $$\varphi\simeq \beta_1\perp\ldots\perp\beta_r\perp\psi$$ for some $r\in\nat$, $w$-tame binary % quadratic 
forms $\beta_1,\ldots,\beta_r$ over $F$ and a quadratic form $\psi$ over $F$ that contains no $w$-tame binary subform.
Since $w$ has rank $1$,  \Cref{inertanisobound} yields that $\dim(\psi)\leq 4$.
    Therefore $$\dim(\varphi)=2r+\dim(\psi)\leq 2r+4\,.$$
We set $m=[wF:2wF]$ and obtain from  \Cref{P:torsion-quotient-torsionfree-groups} that $m\leq 4$.
Since~$\varphi$ is anisotropic over $F^w$, we obtain  from  \Cref{P:PS-preparation}  that $r\leq m\leq 4$ and the nonzero elements represented by $\psi$ over $F^w$ are contained in a union of $m-r$ cosets of $\mg{F^w}/\mg{\mc{O}}_w\sq{F^w}$. 
In particular, if $r=4$, then $\psi$ is trivial.

Hence $\dim(\varphi)\leq 10$. Moreover, if $r\leq 2$ or $r=4$, then we get that $\dim(\varphi)\leq 8$, as desired.
Consider now the case where $r= 3$ and $\psi$ is nontrivial. 
Then $m=4$,
$\dim(\varphi)=6+\dim(\psi)\leq 10$ and 
the nonzero elements represented by $\psi$ over $F^w$ are contained in a single coset of 
$\mg{F^w}/\mg{\mc{O}}_w\sq{F^w}$.
We fix $c_1,\dots,c_4\in\mg{F}$ such that $w(c_i)\not\equiv w(c_j)\bmod 2wF$ for $1\leq i<j\leq 4$, whereby they are representative of the distinct cosets of $\mg{F^w}/\mg{\mc{O}}_w\sq{F^w}$.
We consider the quadratic form
$$\varphi'=\psi\perp c_1\psi\perp c_2\psi\perp c_3\psi$$ over $F$.
Then $\dim(\varphi')=4\dim(\psi)$ and $\varphi'$ is anisotropic over $F^w$.
The above argument therefore yields that 
$\dim(\varphi')\leq 10$. 
Hence $\dim(\psi)\leq 2$, so that also in this case we obtain that $\dim(\varphi)\leq 8$.
\end{proof}

\subsection*{Acknowledgments}

We would like to thank David Leep for inspiring discussions including various pointers to the literature, in particular concerning \Cref{S:DT}, as well as Philip Dittmann for several comments and in particular helping us to make \Cref{L:hens-subfield-reduction} correct.
Karim Johannes Becher acknowledges support by the \emph{Bijzonder Onderzoeksfonds} (project BOF-DOCPRO4, Nr. 49084, `\emph{Constructive proofs for quadratic forms}').
Nicolas Daans gratefully acknowledges support by \emph{Czech Science Foundation} (GA\v CR) grant 21-00420M, \emph{Charles University} PRIMUS Research Programme PRIMUS/24/SCI/010, and \emph{Research Foundation - Flanders (FWO)}, fellowship 1208225N.

\bibliographystyle{plain}

\begin{thebibliography}{99}

\bibitem{Bec18}
K.J.~Becher.
Biquaternion division algebras over rational function fields.
{\em J. Pure and Appl.~Algebra} 224 (2020), 106282.

\bibitem{BDD23}
K.J.~Becher, N.~Daans, Ph.~Dittmann.
Uniform existential definitions of valuations in function fields in one variable. Preprint, 2023.
\url{https://arxiv.org/abs/2311.06044}

\bibitem{BDGMZ}
K.J.~Becher, N.~Daans, D.~Grimm, G.~Manzano Flores, M.~Zaninelli.
{The Pythagoras number of a rational function field in two variables}.
{\em arXiv preprint} (2024).
\url{https://arxiv.org/abs/2302.11425}

\bibitem{BGVG14}
K.J.~Becher, D.~Grimm, J.~Van Geel. Sums of squares in algebraic function fields over a complete discretely valued field. \emph{Pacific J.~Math} {267} (2014), 257--276.

\bibitem{BL13}
K.J.~Becher, D.B.~Leep.
Real fields, valuations, and quadratic forms.
{\em Manuscripta Math.} 141 (2013), 737--754.

\bibitem{CLRR80}
M.D. Choi, T.Y. Lam, B. Reznick, B., A. Rosenberg.
Sums of squares in some integral domains.
J. Algebra 65 (1980), 234--256.

\bibitem{CTM04}
J.-L. Colliot-Thélène, D. Madore.
Surfaces de Del Pezzo sans point rationnel sur un corps de dimension cohomologique un.
{\em J. Inst. Math. Jussieu} 3 (2004), 1--16.

\bibitem{CTPS12}
J.-L.~Colliot-Thélène, R.~Parimala,  V.~Suresh.
Patching and local-global principles for homogeneous spaces over function fields of $p$-adic curves.
{\em Comment. Math. Helv.} 87 (2012), 1011--1033.

 \bibitem{Daa24}
 N.Daans.
 Linkage of Pfister forms over semi-global fields.
 {\em Math. Z.} 308 (2024).

\bibitem{Dur48}
W.~H.~Durfee.
Quadratic forms over fields with a valuation.
{\em Bull. Amer. Math. Soc.} 54 (1948), 338--351.

\bibitem{EKM08}
R.~Elman, N.~Karpenko, A.S.~Merkurjev. {\em The algebraic and geometric theory of quadratic forms.}
Amer. Math. Soc. Colloq. Publ., 56 
American Mathematical Society, Providence, RI, 2008. viii+435 pp. 

\bibitem{EL73}
R.~Elman, T.Y.~Lam.
Quadratic forms and the $u$-invariant. I.
{\em Math. Z.} 131 (1973), 283--304.

\bibitem{EW87}
R.~Elman, A.~Wadsworth. 
Hereditarily quadratically closed fields.
{\em J. Algebra} 111 (1987), 475--482.

\bibitem{ET11}
M.A.~Elomary, J.-P.~Tignol.
Springer's theorem for tame quadratic forms over Henselian fields.
\emph{Math.~Z.} 269 (2011), 309--323.

\bibitem{EP05} 
A.J.~Engler, A.~Prestel. \emph{Valued Fields}. Springer, 2005.

\bibitem{HB10}
D.R.~Heath-Brown.
Zeros of systems of $p$-adic quadratic forms.
{\em Compos. Math.} 146 (2010), 271--287.

\bibitem{HHK09}
D. Harbater, J. Hartmann, and D. Krashen.
Applications of patching to quadratic forms and central simple algebras. 
{\em Invent. Math.} 178 (2009), 231--263.

\bibitem{Kap53}
I.~Kaplansky.
Quadratic forms.
{\em J. Math. Soc. Japan} 5 (1953), 200--207.

\bibitem{Kuh10}
F.V.~Kuhlmann.
Elimination of ramification I: The generalized stability theorem.
{\em Trans. Amer. Math. Soc.} 362 (2010), 5697--5727.

\bibitem{Kuh16}
F.V.~Kuhlmann.
The algebra and model theory of tame valued fields.
{\em J. Reine Angew. Math.} 719 (2016), 1--43.

\bibitem{Lam05}
T.Y.~Lam. 
\newblock 
{\em Introduction to quadratic forms over fields}. 
\newblock Graduate Studies in Mathematics, vol. 67. American Mathematical Society, Providence, RI (2005).

\bibitem{Lan52}
S.~Lang.
On quasi algebraic closure.
{\em Ann. of Math.} 55 (1952), 373--390.

\bibitem{Lee13}
D.B.~Leep.
The $u$-invariant of $p$-adic function fields.
{\em J. Reine Angew. Math.} 679 (2013), 65--73.

\bibitem{Len63}
H.~Lenz.
Einige Ungleichungen aus der Algebra der quadratischen Formen.
{\em Arch. Math. (Basel)} 14 (1963), 373--382.

\bibitem{MMW91}
P.~Mammone, R.~Moresi, A.R.~Wadsworth.
$u$-invariants of fields of characteristic $2$.
\emph{Math. Z.} 208 (1991), 335--347.

\bibitem{Man24}
S.~Mandal. Strong $u$-invariant and Period-Index bound for complete ultrametric fields.
\emph{arXiv preprint}, 16.07.2024,
\url{https://doi.org/10.48550/arXiv.2407.12169}

\bibitem{Mar18}
D.~Marker.
\emph{Model Theory of Valued Fields}, Lecture Notes, University of Illinois at Chicago, Fall 2018.
\url{http://homepages.math.uic.edu/~marker/valued_fields.pdf}

\bibitem{Meh19}
V.~Mehmeti.
Patching over Berkovich curves and quadratic forms.
{\em Compos. Math.} 155 (2019), 2399--2438.

\bibitem{Mer89}
A.S.~Merkurjev.
Kaplansky's conjecture in the theory of quadratic forms. (Russian)
{\em Zap. Nauchn. Sem. Leningrad. Otdel. Mat. Inst. Steklov. (LOMI)} 175, Kol'tsa i Moduli 3 (1989), 75--89, 163--164; English translation in
{\em J. Soviet Math.} 57 (1991), 3489--3497.

\bibitem{Mer91}
A.S.~Merkurjev.
Simple algebras and quadratic forms.
{\em Izv. Akad. Nauk SSSR Ser. Mat.} 55 (1991),  218--224.

\bibitem{Nag57}
M.~Nagata.
Note on a paper of Lang concerning quasi algebraic closure.
\emph{Mem. Coll. Sci. Univ. Kyoto Ser. A. Math.} 30 (1957), 237--241.

\bibitem{Pfi95}
A. Pfister. 
Quadratic forms with applications to algebraic geometry and topology. 
London Mathematical Society Lecture Note Series 217, Cambridge University Press, 1995.

\bibitem{PS10}
R.~Parimala, V.~Suresh. 
The $u$-invariant of the function fields of $p$-adic curves.
{\em Ann. of Math.} 172 (2010), 1391--1405.

\bibitem{PS14}
R.~Parimala, V.~Suresh.
Period-index and $u$-invariant questions for function fields over complete discretely valued fields.
\emph{Invent. Math.} 197 (2014), 215--235.

\bibitem{Pum09}
S. Pumplün.
$u$-Invariants for forms of higher degree.
{\em Expositiones Math.}
27, (2009), 37--53.

\bibitem{Schei09}
C.~Scheiderer. 
The $u$-invariant of one-dimensional function fields over real power series fields.
{\em Arch. Math. (Basel)} 93 (2009), 245--251.

\bibitem{Spr55}
T.A.~Springer. Quadratic froms over a field with a discrete valuation. {\em Indag. Math.} 17(1955), 352--362.

\bibitem{Tei36}
O.~Teichm{\"u}ller. $p$-Algebren. {\em Deutsche Math.} (1936), 362--388.

\bibitem{Tsen36}
C.C.~Tsen.
Zur Stufentheorie und der Quasi-Algebraisch-Abgeschlossenheit kommutativer Körper.
{\em J. Chinese math. Soc.} 1 (1936), 81--92.

\end{thebibliography}

\end{document}